\documentclass[11pt, reqno]{amsart}
\usepackage{amssymb,latexsym,amsmath,amsfonts,mathdots,enumitem}
\usepackage{latexsym}
\usepackage[mathscr]{eucal}
\usepackage{colortbl,xcolor}
\usepackage{lmodern}
\usepackage{sansmathaccent}
\usepackage[latin1]{inputenc}
\usepackage{tikz}
\usepackage{physics}
\usetikzlibrary{shapes,arrows}
\usetikzlibrary{matrix,calc,shapes,arrows,positioning}
\pdfmapfile{+sansmathaccent.map}

\numberwithin{equation}{section}
\theoremstyle{plain}
\newtheorem{exam}{Example}

\newtheorem{thm}{Theorem}[section]

\newtheorem{cor}[thm]{Corollary}
\newtheorem{lem}[thm]{Lemma}
\newtheorem{prop}[thm]{Proposition}
\theoremstyle{definition}
\newtheorem{defn}[thm]{Definition}
\newtheorem{rem}[thm]{Remark}

\numberwithin{equation}{section}

\def\beq{\begin{eqnarray}}
	\def\eeq{\end{eqnarray}}
\def\beqa{\begin{eqnarray*}}
	\def\eeqa{\end{eqnarray*}}

\def\beqn{\begin{equation}}
	\def\eeqn{\end{equation}}

\def\mg#1{}

\renewcommand{\epsilon}{\varepsilon}
\renewcommand{\phi}{\varphi}

\renewcommand{\bf}[1]{\textbf{#1}}
\renewcommand{\it}[1]{\textit{#1}}
\renewcommand{\sc}[1]{\textsc{#1}}
\renewcommand{\sf}[1]{\textsf{#1}}

\DeclareMathOperator{\id}{\sf{id}}

\numberwithin{equation}{section}
\allowdisplaybreaks[4] 

\setlist[enumerate]{font=\upshape,noitemsep, topsep=0pt} 
\setlist[itemize]{noitemsep, topsep=0pt}

\begin{document}
	
	\title{Rational $\mathbf{\Theta_n}$-Inner Function and its Application in Interpolation Problem}
	\author{Dinesh Kumar Keshari, \, Suryanarayan Nayak, \, Avijit Pal, \, and \, Bhaskar Paul}
\subjclass[2010]{32F45, 30E05, 93B36,93B50}

\keywords{ $\mathbf{\Theta}_n$-inner function, Nevanlinna-Pick problem, Blaschke product, $\mu$-synthesis domains, Distinguish boundary}	
	\maketitle
	
	\begin{abstract}
In this paper, we investigate several geometric and function-theoretic properties of the domain $\mathbf{\Theta}_n$. We obtain new characterizations of its distinguished boundary and introduce the notion of a \textit{$\mathbf{\Theta}_n$-inner function}, together with several illustrative examples. We establish connections between $\mathbf{\Theta}n$-inner functions and $\Gamma_n$-inner functions, tetra-inner functions, and $\mathbf{\Theta}_{n+1}$-inner functions. Furthermore, we derive an explicit characterization of rational $\mathbf{\Theta}_n$-inner functions. As an application, for any finite collection of distinct interpolation nodes in $\mathbb{D}$ and prescribed target points in $\mathbf{\Theta}_n$, we obtain an explicit formula for the rational $\mathbf{\Theta}_n$-inner function satisfying the given interpolation data.	\end{abstract}
	
\section{Introduction}\label{Intro}

The spectral Nevanlinna-Pick interpolation problem has attracted considerable attention because of its deep connections with complex geometry, operator theory. For $$z_1,\ldots,z_m\in\mathbb D,~~~
W_1,\ldots,W_m\in M_n(\mathbb C),$$
one asks whether there exists an analytic matrix-valued function
$$
F:\mathbb D\longrightarrow M_n(\mathbb C)
$$
satisfying
$$
F(z_i)=W_i,\quad i=1,\ldots,m,$$
together with the spectral constraint
$
r(F(z))\le1, z\in\mathbb D,$
where $r(\cdot)$ denotes the spectral radius. Unlike the classical Nevanlinna-Pick problem, whose solution is completely described by positivity of the Pick matrix, the spectral version remains much more subtle. One successful approach is to replace matrix-valued interpolation by interpolation on domains determined by spectral invariants of matrices. This philosophy has led to the study of domains such as the symmetrized polydisc, the tetrablock, and several related spaces.

Let $M_n(\mathbb C)$ denote the algebra of all $n\times n$ complex matrices and let
\[
\Omega_n=\{A\in M_n(\mathbb C):r(A)<1\}
\]
be the open spectral ball. A matrix belongs to $\Omega_n$ precisely when every zero of its characteristic polynomial
\[
\operatorname{Ch}_A(z)=\det(zI-A)
\]
lies in the open unit disc. The symmetrization map
\[
\mathbf s=(s_1,\ldots,s_n):\mathbb C^n\to\mathbb C^n
\]
is defined by
\[
s_i(z_1,\ldots,z_n)
=
\sum_{1\le k_1<\cdots<k_i\le n}
z_{k_1}\cdots z_{k_i},
\qquad
1\le i\le n.
\]
Its image of $\mathbb D^n$ is the symmetrized polydisc $
G_n=\mathbf s(\mathbb D^n),$
while
$
\Gamma_n=\mathbf s(\overline{\mathbb D}^{\,n})$
is its closure.  We recall the family of polynomial maps introduced in \cite{Ghosh}, which generalize the symmetrization map. Let
$
\theta_0=1,$
and define
\[
\theta_i(z_1,\ldots,z_n)
=
s_i(z_1^m,\ldots,z_n^m),
\qquad
1\le i\le n-1,
\]
together with
\[
\theta_n(z_1,\ldots,z_n)
=
(z_1\cdots z_n)^q,
\qquad
q=\frac mp,
\]
where $p$ is a fixed divisor of $m$. The images of $\mathbb D^n$ and $\overline{\mathbb D}^{\,n}$ under $
\theta=(\theta_1,\ldots,\theta_n)$
are denoted by $\mathbf{\Theta}_n$ and $\overline{\mathbf{\Theta}}_n$, respectively.
A point $
(\theta_1,\ldots,\theta_n)\in\overline{\mathbf{\Theta}}_n$
is characterized by the property that every zero of the polynomial
\begin{equation}\label{P}
P(z)
=
z^n-\theta_1z^{n-1}
+\cdots
+(-1)^n\theta_n^p
\end{equation}
lies in $\overline{\mathbb D}$. The map $\theta$ naturally induces a holomorphic map
\[
\Pi_n:\Omega_n\longrightarrow\mathbf{\Theta}_n,
\qquad
\Pi_n(A)=\theta(\sigma(A)),
\]
which associates to each matrix the image of its spectrum under $\theta$. Conversely, every point of $\mathbf{\Theta}_n$ determines the companion matrix
$
J_n(\theta_1,\ldots,\theta_n),$
whose characteristic polynomial is exactly $P$. Thus, $\mathbf{\Theta}_n$ provides a natural parameter space for spectral data of matrices in $\Omega_n$, much as the symmetrized polydisc parametrizes unordered eigenvalues.
Another important domain closely related to the spectral ball is the tetrablock
\[
\mathbb E
=
\left\{
(x_1,x_2,x_3)\in\mathbb C^3:
1-zx_1-wx_2+zwx_3\neq0
\text{ whenever }
|z|,|w|<1
\right\},
\]
which has played a central role in $\mu$-synthesis, interpolation theory, and operator theory.

A recurring theme in the study of these domains is the role of their distinguished boundaries. For bounded polynomially convex domains, distinguished boundaries govern extremal problems, rational inner functions, and interpolation. The explicit descriptions of distinguished boundaries of the symmetrized polydisc, $\mathbf{\Theta}_n$, and the tetrablock can be found in \cite{Abouhajar,  Biswas 2, costara}.

Inner functions associated with these domains provide higher-dimensional analogues of finite Blaschke products. They arise naturally in interpolation theory and have been studied extensively for the symmetrized bidisc, the symmetrized polydisc, and the tetrablock (see \cite{JAZLNY,JAZLNY 1,Alsalhi,costara}). Motivated by these developments, we introduce the notion of $\mathbf{\Theta}_n$-inner functions and investigate their basic properties.

One motivation for this study comes from interpolation theory. In the scalar case, every solvable finite interpolation problem on the unit disc admits a Blaschke product solution. An analogous phenomenon holds for matrix-valued interpolation, where rational inner matrix functions play the role of Blaschke products. Mc-Carthy and Young \cite{McCarthy 1} proved that whenever a contractive analytic matrix-valued interpolant exists, one can choose it to be rational inner. Costara \cite{costara} further related the spectral Nevanlinna-Pick problem to the classical matrix-valued interpolation problem via similarity transformations.

We conclude the introduction with an outline of the paper. In Section~\ref{Relations}, we establish several relationships among the domains $\mathbf{\Theta}_n$, $G_n$, and the tetrablock $\mathbb{E}$, together with their distinguished boundaries. In Section~\ref{Characterizations}, we obtain a characterization of $\mathbf{\Theta}_n$ in terms of rational functions. Section~\ref{Inner Function} is devoted to the introduction and characterization of $\mathbf{\Theta}_n$-inner functions. We also construct explicit examples of $\mathbf{\Theta}_n$-inner functions and present an example of a rational function from $\mathbb{D}$ to $\mathbb{C}^2$ that is not a $\mathbf{\Theta}_2$-inner function. Furthermore, we obtain a complete description of rational $\mathbf{\Theta}_n$-inner functions. Finally, we investigate interpolation by rational $\mathbf{\Theta}_n$-inner functions and explicitly construct rational $\mathbf{\Theta}_n$-inner interpolants that solve the corresponding interpolation problem on $\mathbf{\Theta}_n$.

\section{Relations among $\mathbf{\Theta}_n$, $G_n$, and the Tetrablock}\label{Relations}

The domain $\mathbf{\Theta}_n$ is closely related to two well-studied domains in several complex variables, namely the symmetrized polydisc $G_n$ and the tetrablock $\mathbb E$. Understanding these relationships allows us to transfer several geometric and function-theoretic properties between these domains. In particular, the results established in this section will be essential in relating $\mathbf{\Theta}_n$-inner functions to $\Gamma_n$-inner functions and tetrablock-inner functions. We begin by showing that every point of $\mathbf{\Theta}_n$ naturally determines a point of the symmetrized $n$-disc.

\begin{lem}\label{Lem 1}
Let $(\theta_1,\ldots,\theta_n)\in\mathbf{\Theta}_n$. Then $
(\theta_1,\ldots,\theta_{n-1},\theta_n^p)\in G_n.$
\end{lem}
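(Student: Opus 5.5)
The plan is to argue directly from the definition of $\mathbf{\Theta}_n$ as the image of $\mathbb D^n$ under $\theta$. Given $(\theta_1,\ldots,\theta_n)\in\mathbf{\Theta}_n$, I will choose $(z_1,\ldots,z_n)\in\mathbb D^n$ with $\theta_i=\theta_i(z_1,\ldots,z_n)$ for $1\le i\le n$. I will then set $w_j=z_j^m$ for $1\le j\le n$. Since $|z_j|<1$, we have $|w_j|=|z_j|^m<1$, so $(w_1,\ldots,w_n)\in\mathbb D^n$.

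Next I will compare coordinates with $\mathbf s(w_1,\ldots,w_n)$. For $1\le i\le n-1$, the definition gives
\[
\theta_i=s_i(z_1^m,\ldots,z_n^m)=s_i(w_1,\ldots,w_n).
\]
For the last coordinate, the relation $q=m/p$ gives $pq=m$, so
\[
\theta_n^p=\bigl((z_1\cdots z_n)^q\bigr)^p=(z_1\cdots z_n)^{m}=w_1\cdots w_n=s_n(w_1,\ldots,w_n).
\]
Hence $(\theta_1,\ldots,\theta_{n-1},\theta_n^p)=\mathbf s(w_1,\ldots,w_n)\in\mathbf s(\mathbb D^n)=G_n$.

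As a consistency check, this matches the polynomial description in \eqref{P}. Its coefficients are exactly $(\theta_1,\ldots,\theta_{n-1},\theta_n^p)$, and its roots are $w_1,\ldots,w_n$, all of which lie in $\mathbb D$. That is precisely the root characterization of $G_n$.

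There is no real obstacle here. The only point requiring care is the exponent bookkeeping $pq=m$, which uses that $p$ divides $m$. One should also note that the choice of preimage $(z_1,\ldots,z_n)$ is irrelevant, because the conclusion depends only on the coordinates $\theta_i$.
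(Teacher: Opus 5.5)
Your proof is correct and follows essentially the same route as the paper's. You pick a preimage in $\mathbb D^n$, use $pq=m$ to get $\theta_n^p=(z_1\cdots z_n)^m=s_n(z_1^m,\ldots,z_n^m)$, and conclude that $(\theta_1,\ldots,\theta_{n-1},\theta_n^p)=\mathbf s(z_1^m,\ldots,z_n^m)\in G_n$.
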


\begin{proof}
Since $(\theta_1,\ldots,\theta_n)\in\mathbf{\Theta}_n$, there exist
$z_1,\ldots,z_n\in\mathbb D$ such that
\[
\theta_i=s_i(z_1^m,\ldots,z_n^m), \qquad 1\le i\le n-1,
\]
and
\[
\theta_n=(z_1\cdots z_n)^q,
\qquad q=\frac{m}{p}.
\]
Hence
\[
\theta_n^p
=(z_1\cdots z_n)^m
=z_1^m\cdots z_n^m
=s_n(z_1^m,\ldots,z_n^m).
\]
Therefore
\[
(\theta_1,\ldots,\theta_{n-1},\theta_n^p)
=(s_1,\ldots,s_n)(z_1^m,\ldots,z_n^m)
\in G_n,
\]
which completes the proof.
\end{proof}
The proof of the next result is identical, replacing $\mathbb D$ by $\overline{\mathbb D}$.

\begin{lem}\label{Lem 1 Bar}
Let $(\theta_1,\ldots,\theta_n)\in\overline{\mathbf{\Theta}}_n$. Then
$
(\theta_1,\ldots,\theta_{n-1},\theta_n^p)\in\Gamma_n.$
\end{lem}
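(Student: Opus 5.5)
The plan is to copy the proof of Lemma \ref{Lem 1} verbatim, with $\mathbb D$ replaced by $\overline{\mathbb D}$ throughout. By definition, $\overline{\mathbf{\Theta}}_n=\theta(\overline{\mathbb D}^{\,n})$. So a point $(\theta_1,\ldots,\theta_n)\in\overline{\mathbf{\Theta}}_n$ comes from some $z_1,\ldots,z_n\in\overline{\mathbb D}$ with
\[
\theta_i=s_i(z_1^m,\ldots,z_n^m), \qquad 1\le i\le n-1,
\]
and
\[
\theta_n=(z_1\cdots z_n)^q, \qquad q=\frac mp.
\]

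Raising $\theta_n$ to the power $p$ gives $\theta_n^p=(z_1\cdots z_n)^{qp}=(z_1\cdots z_n)^m=s_n(z_1^m,\ldots,z_n^m)$. Hence
\[
(\theta_1,\ldots,\theta_{n-1},\theta_n^p)=\mathbf s(z_1^m,\ldots,z_n^m).
\]
Since $|z_j|\le 1$, each $w_j=z_j^m$ also lies in $\overline{\mathbb D}$. The point is therefore the image under $\mathbf s$ of $(w_1,\ldots,w_n)\in\overline{\mathbb D}^{\,n}$, so it lies in $\Gamma_n=\mathbf s(\overline{\mathbb D}^{\,n})$.

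There is no genuine obstacle here. The only point to check is that $\overline{\mathbf{\Theta}}_n$ is indeed the image $\theta(\overline{\mathbb D}^{\,n})$, as the paper defines it, rather than the topological closure of $\mathbf{\Theta}_n$. Even if it were meant as the closure, the two coincide: $\theta$ is continuous and $\overline{\mathbb D}^{\,n}$ is compact, so $\theta(\overline{\mathbb D}^{\,n})$ is closed and contains $\mathbf{\Theta}_n$. Conversely, every point of $\theta(\overline{\mathbb D}^{\,n})$ is a limit of images of interior points, namely $\theta(rz)$ as $r\to1^-$. So the argument above applies in either reading.
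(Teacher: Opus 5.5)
Your proof is correct and matches the paper's argument: the paper simply says the proof of Lemma~\ref{Lem 1} goes through verbatim with $\mathbb D$ replaced by $\overline{\mathbb D}$, which is exactly what you carried out. Your side remark that $\theta(\overline{\mathbb D}^{\,n})$ coincides with the topological closure of $\mathbf{\Theta}_n$ is also correct, though the paper's definition of $\overline{\mathbf{\Theta}}_n$ as $\theta(\overline{\mathbb D}^{\,n})$ makes it unnecessary.
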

The preceding lemmas naturally give rise to the following mapping.
\begin{rem}\label{Rem 1}
Suppose that $p$ divides $m$. Define
\[
\pi_p:\mathbb C^n\longrightarrow\mathbb C^n,
\qquad
\pi_p(\theta_1,\ldots,\theta_n)
=(\theta_1,\ldots,\theta_{n-1},\theta_n^p).
\]
By Lemmas~\ref{Lem 1} and~\ref{Lem 1 Bar},
$
\pi_p(\mathbf{\Theta}_n)\subseteq G_n,
\pi_p(\overline{\mathbf{\Theta}}_n)\subseteq\Gamma_n.
$
Moreover, every point of $G_n$ (respectively, $\Gamma_n$) has a preimage under $\pi_p$. Consequently,
\[
\pi_p:\mathbf{\Theta}_n\to G_n
\quad\text{and}\quad
\pi_p:\overline{\mathbf{\Theta}}_n\to\Gamma_n
\]
are surjective maps.
\end{rem}

The next lemma shows how points in $\mathbf{\Theta}_n$ can be embedded naturally into $\mathbf{\Theta}_{n+1}$.

\begin{lem}\label{Lem 2}
Let $(\theta_1,\ldots,\theta_n)\in\mathbf{\Theta}_n$. Then, for every
$\alpha\in\mathbb D$,
\[
(\alpha^m+\theta_1,\,
\alpha^m\theta_1+\theta_2,\,
\ldots,\,
\alpha^m\theta_{n-1}+\theta_n^p,\,
\alpha^{m/p}\theta_n)
\in\mathbf{\Theta}_{n+1}.
\]
\end{lem}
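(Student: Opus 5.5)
The plan is to argue directly from the definition of $\mathbf{\Theta}_n$ as the image of $\mathbb D^n$ under $\theta$, exactly as in the proof of Lemma~\ref{Lem 1}: I will adjoin $\alpha$ as an $(n+1)$-st coordinate and compute the image of the enlarged point under the $(n+1)$-variable map $\theta$.

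First, since $(\theta_1,\ldots,\theta_n)\in\mathbf{\Theta}_n$, I choose $z_1,\ldots,z_n\in\mathbb D$ with $\theta_i=s_i(z_1^m,\ldots,z_n^m)$ for $1\le i\le n-1$ and $\theta_n=(z_1\cdots z_n)^q$, where $q=m/p$. Second, I set $z_{n+1}=\alpha\in\mathbb D$, so that $(z_1,\ldots,z_n,\alpha)\in\mathbb D^{n+1}$. I then evaluate the defining map of $\mathbf{\Theta}_{n+1}$ at this point. Its components are $s_i(z_1^m,\ldots,z_n^m,\alpha^m)$ for $1\le i\le n$, together with the last component $(z_1\cdots z_n\alpha)^q$.

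The key tool is the standard recursion for elementary symmetric polynomials when a variable is added. Writing $w_j=z_j^m$, $s_0=1$, and $s_i=s_i(w_1,\ldots,w_n)$, it reads
\[
s_i(w_1,\ldots,w_n,\alpha^m)=\alpha^m s_{i-1}(w_1,\ldots,w_n)+s_i(w_1,\ldots,w_n),\qquad 1\le i\le n+1.
\]
Applying it for $i=1$ gives $\alpha^m+\theta_1$. For $2\le i\le n-1$ it gives $\alpha^m\theta_{i-1}+\theta_i$. For $i=n$ it gives $\alpha^m\theta_{n-1}+s_n(w)$. As in Lemma~\ref{Lem 1}, $s_n(w)=(z_1\cdots z_n)^m=\theta_n^p$, so this component equals $\alpha^m\theta_{n-1}+\theta_n^p$. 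Finally, the last coordinate is
\[
(\alpha z_1\cdots z_n)^q=\alpha^{q}(z_1\cdots z_n)^q=\alpha^{m/p}\theta_n ,
\]
where I use that $q$ is a positive integer, so $\alpha^{m/p}$ is unambiguous.

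Putting these together, the displayed tuple is exactly $\theta(z_1,\ldots,z_n,\alpha)$ with $(z_1,\ldots,z_n,\alpha)\in\mathbb D^{n+1}$, and hence it lies in $\mathbf{\Theta}_{n+1}$. There is no real obstacle here. The only points needing care are the bookkeeping that the $n$-th slot uses $s_n(w)=\theta_n^p$ rather than $\theta_n$, and keeping in mind that in $\mathbf{\Theta}_{n+1}$ the symmetric-function coordinates run up to index $n$. The same argument with $\overline{\mathbb D}$ in place of $\mathbb D$ gives the analogous statement for the closures.
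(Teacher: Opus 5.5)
Your proposal is correct and follows essentially the same route as the paper: you pick a preimage $(z_1,\ldots,z_n)\in\mathbb D^n$, adjoin $z_{n+1}=\alpha$, and identify each coordinate through $s_i(w,\alpha^m)=\alpha^m s_{i-1}(w)+s_i(w)$, using $s_n(w)=\theta_n^p$ in the $n$-th slot. Stating the recursion explicitly and noting that $q=m/p$ is a positive integer (so $\alpha^{m/p}$ is unambiguous) makes the bookkeeping slightly more transparent than in the paper, which simply asserts these identities.
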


\begin{proof}
Choose $z_1,\ldots,z_n\in\mathbb D$ such that
\[
\theta_i=s_i(z_1^m,\ldots,z_n^m),
\qquad
1\le i\le n-1,
\]
and
\[
\theta_n=(z_1\cdots z_n)^q,
\qquad
q=\frac{m}{p}.
\]
Set $z_{n+1}=\alpha$, where $\alpha\in\mathbb D$. Then

\[
\alpha^m+\theta_1
=s_1(z_1^m,\ldots,z_n^m,\alpha^m),
\]
and, for $1\le i\le n-2$,
\[
\alpha^m\theta_i+\theta_{i+1}
=s_{i+1}(z_1^m,\ldots,z_n^m,\alpha^m).
\]
Similarly,
\[
\alpha^m\theta_{n-1}+\theta_n^p
=s_n(z_1^m,\ldots,z_n^m,\alpha^m),
\]
while
\[
\alpha^{m/p}\theta_n
=(z_1\cdots z_n\alpha)^{m/p}.
\]
These are precisely the defining coordinates of a point in
$\mathbf{\Theta}_{n+1}$. Hence
\[
(\alpha^m+\theta_1,\,
\alpha^m\theta_1+\theta_2,\,
\ldots,\,
\alpha^m\theta_{n-1}+\theta_n^p,\,
\alpha^{m/p}\theta_n)
\in\mathbf{\Theta}_{n+1},
\]
as required.
\end{proof}	
We state only the corresponding result for $\overline{\mathbf{\Theta}}_n$, since its proof is identical to that of Lemma~\ref{Lem 2}.

\begin{lem}\label{Lem 2 Bar}
Let $(\theta_1,\dots,\theta_n)\in\mathbb{C}^n$. If
$(\theta_1,\dots,\theta_n)\in\overline{\mathbf{\Theta}}_n$, then
\[
(\alpha^m+\theta_1,\,
\alpha^m\theta_1+\theta_2,\,
\dots,\,
\alpha^m\theta_{n-1}+\theta_n^p,\,
\alpha^{m/p}\theta_n)
\in
\overline{\mathbf{\Theta}}_{n+1}
\]
for every $\alpha\in\overline{\mathbb D}$.
\end{lem}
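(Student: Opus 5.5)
The plan is to copy the proof of Lemma~\ref{Lem 2}, working with $\overline{\mathbb D}$ in place of $\mathbb D$. By definition, $\overline{\mathbf{\Theta}}_n=\theta(\overline{\mathbb D}^{\,n})$. So for $(\theta_1,\dots,\theta_n)\in\overline{\mathbf{\Theta}}_n$ we can choose $z_1,\dots,z_n\in\overline{\mathbb D}$ with
\[
\theta_i=s_i(z_1^m,\dots,z_n^m)\quad(1\le i\le n-1),\qquad \theta_n=(z_1\cdots z_n)^q,\quad q=\tfrac mp .
\]
Given $\alpha\in\overline{\mathbb D}$, set $z_{n+1}=\alpha$. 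The point $(z_1,\dots,z_{n+1})$ then lies in $\overline{\mathbb D}^{\,n+1}$.

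Next I will use the recursion for elementary symmetric functions when one variable is added:
\[
s_k(w_1,\dots,w_n,w_{n+1})=s_k(w_1,\dots,w_n)+w_{n+1}\,s_{k-1}(w_1,\dots,w_n),
\]
with $s_0=1$ and $s_{n+1}(w_1,\dots,w_n)=0$. Apply it with $w_j=z_j^m$ and $w_{n+1}=\alpha^m$. This gives the following:
\begin{itemize}
\item $s_1=\alpha^m+\theta_1$;
\item $s_{i+1}=\alpha^m\theta_i+\theta_{i+1}$ for $1\le i\le n-2$;
\item $s_n=\alpha^m\theta_{n-1}+s_n(z_1^m,\dots,z_n^m)$.
\end{itemize}
For the last one, $s_n(z_1^m,\dots,z_n^m)=(z_1\cdots z_n)^m=\big((z_1\cdots z_n)^q\big)^p=\theta_n^p$, using $qp=m$. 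This is the computation from Lemma~\ref{Lem 1 Bar}.

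For the final coordinate,
\[
\theta_{n+1}(z_1,\dots,z_{n+1})=(z_1\cdots z_n\alpha)^q=\alpha^{q}\theta_n=\alpha^{m/p}\theta_n .
\]
Together these show that the displayed $(n+1)$-tuple equals $\theta(z_1,\dots,z_n,\alpha)$. That point lies in $\theta(\overline{\mathbb D}^{\,n+1})=\overline{\mathbf{\Theta}}_{n+1}$, which proves the statement.

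There is no real obstacle here. The only step needing care is the $n$-th coordinate, where $\theta_n^p$ replaces $s_n(z_1^m,\dots,z_n^m)$; this relies on $p$ dividing $m$, so that $q$ is an integer. One point of consistency is also worth noting: $\theta_{n+1}$ is defined in $\mathbf{\Theta}_{n+1}$ with the same $m$ and $p$, so the exponent $q$ matches in the last coordinate.
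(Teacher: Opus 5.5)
Your proposal is correct and matches the paper's approach: the paper says the proof is identical to that of Lemma~\ref{Lem 2} with $\overline{\mathbb D}$ in place of $\mathbb D$. That argument is exactly yours: adjoin $z_{n+1}=\alpha$, apply the elementary symmetric function recursion, use $s_n(z_1^m,\dots,z_n^m)=\theta_n^p$ for the $n$-th coordinate, and use $(z_1\cdots z_n\alpha)^{m/p}=\alpha^{m/p}\theta_n$ for the last one.
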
	
For $(\theta_1,\ldots,\theta_n)\in\mathbb C^n$ and $\alpha\in\mathbb C$, we define the functions $\Phi^{(i)}_1$ and $\Phi^{(i)}_2$, for $1\leq i\leq n-1$, by
\begin{equation}\label{Phi_1}
\begin{aligned}
\Phi^{(i)}_1(\alpha^i\theta_i,\alpha^{n-i}\theta_{n-i},\alpha^n\theta_n^p)
&=k(i)^2\bigl(1-|\alpha^n\theta_n^p|^2\bigr)
+\bigl(|\alpha^i\theta_i|^2-|\alpha^{n-i}\theta_{n-i}|^2\bigr)\\
&\quad-k(i)\alpha^i\bigl(\theta_i-|\alpha|^{2(n-i)}
\overline{\theta}_{n-i}\theta_n^p\bigr)-k(i)\overline{\alpha}^{\,i}\bigl(\overline{\theta}_i
-|\alpha|^{2(n-i)}\theta_{n-i}\overline{\theta}_n^{\,p}\bigr),
\end{aligned}
\end{equation}
and
\begin{equation}\label{Phi_2}
\begin{aligned}
\Phi^{(i)}_2(\alpha^i\theta_i,\alpha^{n-i}\theta_{n-i},\alpha^n\theta_n^p)
&=k(i)^2\bigl(1-|\alpha^n\theta_n^p|^2\bigr)
+\bigl(|\alpha^{n-i}\theta_{n-i}|^2-|\alpha^i\theta_i|^2\bigr)\\
&\quad-k(i)\alpha^{n-i}\bigl(\theta_{n-i}
-|\alpha|^{2i}\overline{\theta}_i\theta_n^p\bigr)-k(i)\overline{\alpha}^{\,n-i}\bigl(\overline{\theta}_{n-i}
-|\alpha|^{2i}\theta_i\overline{\theta}_n^{\,p}\bigr),
\end{aligned}
\end{equation}
where
$
k(i)=\binom{n-1}{i}+\binom{n-1}{n-i},~ 1\leq i\leq n-1.$
\begin{prop}\label{Prop 2}
Let $(\theta_1,\dots,\theta_n)\in\overline{\mathbf{\Theta}}_n$. Then, for every
$1\leq i\leq n-1$ and every $\alpha\in\overline{\mathbb D}$,
\begin{equation}\label{Phi^i_1, Phi^i_2 > 0}
\begin{aligned}
\Phi^{(i)}_1(\alpha^i\theta_i,\alpha^{n-i}\theta_{n-i},\alpha^n\theta_n^p)\geq0,
\qquad
\Phi^{(i)}_2(\alpha^i\theta_i,\alpha^{n-i}\theta_{n-i},\alpha^n\theta_n^p)\geq0.
\end{aligned}
\end{equation}
\end{prop}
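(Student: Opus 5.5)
The plan is to reduce the statement to the known description of the closed tetrablock $\overline{\mathbb E}$, passing through the closed symmetrized polydisc $\Gamma_n$. There are three steps: move from $\overline{\mathbf{\Theta}}_n$ to $\Gamma_n$, rescale by $\alpha$, and then project to a point of $\overline{\mathbb E}$.

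\emph{Step 1 (to $\Gamma_n$ and rescaling).} By Lemma~\ref{Lem 1 Bar}, the point $s=(\theta_1,\ldots,\theta_{n-1},\theta_n^p)$ lies in $\Gamma_n$. Write $s=\mathbf s(\lambda_1,\ldots,\lambda_n)$ with $\lambda_j=z_j^m\in\overline{\mathbb D}$. For $\alpha\in\overline{\mathbb D}$, the points $\alpha\lambda_j$ are again in $\overline{\mathbb D}$. Since $s_k$ is homogeneous of degree $k$,
\[
(\alpha\theta_1,\alpha^2\theta_2,\ldots,\alpha^{n-1}\theta_{n-1},\alpha^n\theta_n^p)=\mathbf s(\alpha\lambda_1,\ldots,\alpha\lambda_n)\in\Gamma_n .
\]
So it suffices to prove the following: for every $(s_1,\ldots,s_n)\in\Gamma_n$ and every $1\le i\le n-1$, the two quantities obtained by putting $a=s_i$, $b=s_{n-i}$, $c=s_n$ (with $\alpha=1$) are nonnegative.

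\emph{Step 2 (to the tetrablock).} I would show that
\[
x=\Bigl(\frac{s_i}{k(i)},\,\frac{s_{n-i}}{k(i)},\,s_n\Bigr)\in\overline{\mathbb E}
\qquad\text{whenever } s\in\Gamma_n .
\]
This is a known link between $\Gamma_n$ and $\overline{\mathbb E}$, and I would cite it from the literature cited in the introduction if available. Otherwise I would prove it directly. One route is to check the defining condition of $\mathbb E$ for points of $G_n$ and then pass to closures. The defining condition says that
\[
1-zx_1-wx_2+zwx_3\neq0\quad\text{for } z,w\in\mathbb D .
\]
To verify it, I would expand $s_i$ and $s_{n-i}$ over subsets of the roots and compare with the product structure of $s_n$, using the combinatorial weight $k(i)=\binom{n-1}{i}+\binom{n-1}{n-i}$. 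This step is where the specific constant $k(i)$ enters, and I expect it to be the main obstacle.

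\emph{Step 3 (reading off the inequality).} Apply the Abouhajar--White--Young characterization of $\overline{\mathbb E}$. It states that
\[
|x_1-\bar x_2x_3|+|x_1x_2-x_3|\le 1-|x_2|^2 ,
\]
and this implies
\[
1+|x_1|^2-|x_2|^2-|x_3|^2\ge 2|x_1-\bar x_2x_3| .
\]
Multiply through by $k(i)^2$ and substitute
\[
x_1=\frac{\alpha^i\theta_i}{k(i)},\qquad x_2=\frac{\alpha^{n-i}\theta_{n-i}}{k(i)},\qquad x_3=\alpha^n\theta_n^p .
\]
This gives
\[
k(i)^2\bigl(1-|\alpha^n\theta_n^p|^2\bigr)+|\alpha^i\theta_i|^2-|\alpha^{n-i}\theta_{n-i}|^2\ \ge\ 2k(i)\,|w| ,
\]
where
\[
w=\alpha^i\theta_i-\overline{\alpha}^{\,n-i}\overline{\theta}_{n-i}\,\alpha^n\theta_n^p=\alpha^i\bigl(\theta_i-|\alpha|^{2(n-i)}\overline{\theta}_{n-i}\theta_n^p\bigr).
\]
Since $w+\overline w=2\operatorname{Re}w\le 2|w|$, the left side minus $k(i)(w+\overline w)$ is nonnegative. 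That expression is exactly $\Phi^{(i)}_1$. The inequality for $\Phi^{(i)}_2$ follows in the same way from the companion inequality
\[
1+|x_2|^2-|x_1|^2-|x_3|^2\ge 2|x_2-\bar x_1x_3| .
\]
Alternatively, it follows by symmetry, since exchanging $i$ and $n-i$ leaves $k(i)$ unchanged and swaps the roles of $x_1$ and $x_2$.
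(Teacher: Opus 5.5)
Your Steps~1 and~3 are correct. Indeed $(\alpha\theta_1,\alpha^2\theta_2,\dots,\alpha^{n-1}\theta_{n-1},\alpha^n\theta_n^p)=\mathbf s(\alpha\lambda)\in\Gamma_n$. With $(a,b,c)=(\alpha^i\theta_i,\alpha^{n-i}\theta_{n-i},\alpha^n\theta_n^p)$ one has $\Phi^{(i)}_1=k(i)^2(1-|c|^2)+|a|^2-|b|^2-2k(i)\operatorname{Re}(a-\overline{b}c)$, and this is nonnegative as soon as $(a/k(i),b/k(i),c)\in\overline{\mathbb E}$.

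The gap is Step~2. You do not prove it, and it is not an auxiliary lemma: it is essentially the whole content of the statement. Take $\alpha=\omega\in\mathbb T$ and let $\omega$ vary. Then $\Phi^{(i)}_1\ge0$ becomes $k(i)^2(1-|\theta_n^p|^2)+|\theta_i|^2-|\theta_{n-i}|^2\ge 2k(i)\,|\theta_i-\overline{\theta}_{n-i}\theta_n^p|$. Together with the trivial bound $|\theta_i|\le k(i)$, this is exactly the Abouhajar--White--Young criterion for $(\theta_i/k(i),\theta_{n-i}/k(i),\theta_n^p)\in\overline{\mathbb E}$. So you have only replaced the proposition by an essentially equivalent claim.

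In the paper the logic runs the other way. The proposition is Lemma~\ref{Lem 1 Bar} plus Pal's $\Gamma_n$ result \cite[Theorem~3.3]{A. Pal}, and the tetrablock membership (Lemma~\ref{Lem 3}) is deduced from it through Corollary~\ref{Cor 3}. So Step~2 cannot be borrowed from there without circularity. Nor does your sketch supply it. Expanding $s_i$ and $s_{n-i}$ over subsets of roots naturally gives the duality $s_{n-i}=\overline{s_i}\,s_n$ on $\mathbb T^n$ and the bounds $|s_j|\le\binom nj$. Neither controls $1-zx_1-wx_2+zwx_3$ at interior points of the polydisc without a further idea.

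What is missing is a structural description of $\Gamma_n$, and that is also where $k(i)=\binom{n-1}{i}+\binom{n-1}{n-i}$ comes from. By Costara's parametrization (\cite[Theorem~3.7]{costara}, the result behind part~(4) of Theorem~\ref{Bar Theta_n Charaterization}), every $s\in\Gamma_n$ satisfies $|s_n|\le1$ and $s_j=c_j+\overline{c}_{n-j}s_n$ for some $(c_1,\dots,c_{n-1})\in\Gamma_{n-1}$. Then $s_i-\overline{s}_{n-i}s_n=c_i(1-|s_n|^2)$ and $|s_i|^2-|s_{n-i}|^2=(|c_i|^2-|c_{n-i}|^2)(1-|s_n|^2)$, so
\begin{align*}
&k(i)^2(1-|s_n|^2)+|s_i|^2-|s_{n-i}|^2-2k(i)\operatorname{Re}\bigl(s_i-\overline{s}_{n-i}s_n\bigr)\\
&\qquad=(1-|s_n|^2)\bigl(k(i)^2+|c_i|^2-|c_{n-i}|^2-2k(i)\operatorname{Re}c_i\bigr)\ge(1-|s_n|^2)\bigl((k(i)-|c_i|)^2-|c_{n-i}|^2\bigr)\ge0.
\end{align*}
The last inequality holds because $|c_i|\le\binom{n-1}{i}$ and $|c_{n-i}|\le\binom{n-1}{n-i}$ give $k(i)-|c_i|\ge|c_{n-i}|$. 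Applied to $s=\mathbf s(\alpha\lambda)$, this proves $\Phi^{(i)}_1\ge0$ directly, and $\Phi^{(i)}_2\ge0$ follows by exchanging $i$ and $n-i$; no tetrablock is needed. The same identities also give your Step~2 via the Abouhajar--White--Young condition $|x_1-\overline{x}_2x_3|+|x_2-\overline{x}_1x_3|\le1-|x_3|^2$. The side condition $|x_1|\le1$ holds since $|s_i|\le\binom ni=k(i)$.

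If you prefer to keep your own route, add a maximum-principle step. Fix $w\in\mathbb D$ and consider $\lambda\mapsto\bigl(s_i-k(i)ws_n\bigr)/\bigl(k(i)-ws_{n-i}\bigr)$. It is holomorphic near $\overline{\mathbb D}^n$, since $|s_{n-i}|\le k(i)$. On $\mathbb T^n$, write $u=s_i\overline{s}_n$; the duality $s_{n-i}=\overline{s_i}\,s_n$ gives modulus $|u-k(i)w|/|k(i)-\overline{w}u|$, which is at most $1$ because $|k(i)-\overline{w}u|^2-|u-k(i)w|^2=(k(i)^2-|u|^2)(1-|w|^2)\ge0$. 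By the maximum principle the bound holds on all of $\overline{\mathbb D}^n$, and this is exactly the non-vanishing of $1-zx_1-wx_2+zwx_3$ for $z,w\in\mathbb D$.
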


\begin{proof}
Since $(\theta_1,\dots,\theta_n)\in\overline{\mathbf{\Theta}}_n$, Lemma~\ref{Lem 1 Bar} implies that
$
(\theta_1,\dots,\theta_{n-1},\theta_n^p)\in\Gamma_n.$
Therefore, by \cite[Theorem~3.3]{A. Pal},
\[
\Phi^{(i)}_1(\alpha^i\theta_i,\alpha^{n-i}\theta_{n-i},\alpha^n\theta_n^p)\geq0,
\qquad
\Phi^{(i)}_2(\alpha^i\theta_i,\alpha^{n-i}\theta_{n-i},\alpha^n\theta_n^p)\geq0,
\]
for every $\alpha\in\overline{\mathbb D}$ and every $1\leq i\leq n-1$.
\end{proof}

Proposition~\ref{Prop 2} immediately yields the estimate
\[
|\theta_i-\overline{\theta}_{n-i}\theta_n^p|
+
|\theta_{n-i}-\overline{\theta}_i\theta_n^p|
\leq
k(i)\bigl(1-|\theta_n^p|^2\bigr),
\]
for $1\leq i\leq n-1$. We record this as a corollary.

\begin{cor}\label{Cor 3}
Let $
k(i)=\binom{n-1}{i}+\binom{n-1}{n-i}.$
If $(\theta_1,\dots,\theta_n)\in\overline{\mathbf{\Theta}}_n$, then
\[
|\theta_i-\overline{\theta}_{n-i}\theta_n^p|
+
|\theta_{n-i}-\overline{\theta}_i\theta_n^p|
\leq
k(i)\bigl(1-|\theta_n^p|^2\bigr),
\]
for every $1\leq i\leq n-1$.
\end{cor}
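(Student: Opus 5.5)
We derive the corollary from Proposition~\ref{Prop 2} by restricting $\alpha$ to the unit circle and then optimizing over its argument. Fix $1\le i\le n-1$ and write $\omega=\theta_n^p$. Also set
\[
a=\theta_i-\overline{\theta}_{n-i}\,\omega,\qquad b=\theta_{n-i}-\overline{\theta}_i\,\omega .
\]

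\emph{Step 1: specialize $\alpha$ to the circle.} Take $\alpha\in\mathbb T$ with $\alpha=e^{\mathrm{i}t}$. Then $|\alpha|=1$, so \eqref{Phi_1} becomes
\[
\Phi^{(i)}_1=k(i)^2(1-|\omega|^2)+|\theta_i|^2-|\theta_{n-i}|^2-2k(i)\,\mathrm{Re}\bigl(\alpha^i a\bigr)\ \ge 0 .
\]
Similarly, \eqref{Phi_2} becomes
\[
\Phi^{(i)}_2=k(i)^2(1-|\omega|^2)+|\theta_{n-i}|^2-|\theta_i|^2-2k(i)\,\mathrm{Re}\bigl(\alpha^{n-i} b\bigr)\ \ge 0 .
\]

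\emph{Step 2: choose $\alpha$ to remove the real parts.} Since $\alpha\mapsto\alpha^i$ maps $\mathbb T$ onto $\mathbb T$, we can choose $\alpha$ with $\alpha^i a=|a|$. Likewise, we can choose (a possibly different) $\alpha$ with $\alpha^{n-i}b=|b|$. This gives
\[
2k(i)|a|\le k(i)^2(1-|\omega|^2)+|\theta_i|^2-|\theta_{n-i}|^2,
\qquad
2k(i)|b|\le k(i)^2(1-|\omega|^2)+|\theta_{n-i}|^2-|\theta_i|^2 .
\]
Adding the two inequalities cancels the $|\theta_i|^2-|\theta_{n-i}|^2$ terms, leaving
\[
2k(i)\bigl(|a|+|b|\bigr)\le 2k(i)^2\bigl(1-|\omega|^2\bigr).
\]
Since $k(i)>0$, dividing by $2k(i)$ yields the claimed inequality.

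\emph{Main point to check.} We need \eqref{Phi_1} to reduce exactly to the expression in Step 1 when $|\alpha|=1$. Expanding, the last two terms of \eqref{Phi_1} are $-k(i)\alpha^i a$ and its complex conjugate, whose sum is $-2k(i)\,\mathrm{Re}(\alpha^i a)$; the analogous computation applies to \eqref{Phi_2}. One should also note that Proposition~\ref{Prop 2} allows two different choices of $\alpha$ in the two inequalities. This is legitimate because each inequality holds for every $\alpha\in\overline{\mathbb D}$ separately.
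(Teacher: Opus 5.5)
Your argument is correct and is the same route the paper takes: the paper says only that Proposition~\ref{Prop 2} ``immediately yields'' the estimate, and you supply the omitted steps. Those steps are restricting to $\alpha\in\mathbb T$, choosing $\alpha$ separately so that $\mathrm{Re}(\alpha^i a)=|a|$ and $\mathrm{Re}(\alpha^{n-i}b)=|b|$, and adding the two inequalities so that the $|\theta_i|^2-|\theta_{n-i}|^2$ terms cancel.
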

Since
$
\theta_i=s_i(z_1^m,\dots,z_n^m), 1\leq i\leq n-1,$
for some $z_1,\dots,z_n\in\mathbb D$, it follows from the triangle inequality that
\begin{equation}\label{Estimate theta_i}
|\theta_i|
\leq
k(i),
\qquad
1\leq i\leq n-1.
\end{equation}

The next lemma establishes a connection between 
$\overline{\mathbf{\Theta}}_n$ and the closed tetrablock.

\begin{lem}\label{Lem 3}
Let
$
k(i)=\binom{n-1}{i}+\binom{n-1}{n-i}.$
If $(\theta_1,\dots,\theta_n)\in\overline{\mathbf{\Theta}}_n$, then
\[
\left(
\frac{\theta_i}{k(i)},
\frac{\theta_{n-i}}{k(i)},
\theta_n^p
\right)
\in
\overline{\mathbb E},
\qquad
1\leq i\leq n-1.
\]
\end{lem}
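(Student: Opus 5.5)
The plan is to use the standard characterization of the closed tetrablock due to Abouhajar, White and Young \cite{Abouhajar}. A point $(x_1,x_2,x_3)\in\mathbb C^3$ lies in $\overline{\mathbb E}$ if and only if
\[
|x_1-\overline{x}_2x_3|+|x_2-\overline{x}_1x_3|\le 1-|x_3|^2,
\qquad |x_1|\le1,\quad |x_2|\le1 .
\]
Equivalently, it suffices to have $|x_3|\le 1$ together with the first inequality, and the tetrablock literature contains several such equivalent forms. We apply this to
\[
x_1=\frac{\theta_i}{k(i)},\qquad x_2=\frac{\theta_{n-i}}{k(i)},\qquad x_3=\theta_n^p,
\]
and verify the conditions one at a time for a fixed $1\le i\le n-1$.

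First, Lemma~\ref{Lem 1 Bar} gives $(\theta_1,\dots,\theta_{n-1},\theta_n^p)\in\Gamma_n$. Hence $\theta_n^p=z_1^m\cdots z_n^m$ with $|z_j|\le1$, so $|x_3|\le 1$. Second, \eqref{Estimate theta_i}, applied to indices $i$ and $n-i$, gives $|x_1|\le1$. For $x_2$, note that $k(i)=k(n-i)$, because the defining sum is symmetric under $i\leftrightarrow n-i$. Then \eqref{Estimate theta_i} at index $n-i$ gives $|\theta_{n-i}|\le k(n-i)=k(i)$, so $|x_2|\le 1$.

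The main step is the first inequality. Compute
\[
x_1-\overline{x}_2x_3=\frac{1}{k(i)}\Bigl(\theta_i-\frac{\overline{\theta}_{n-i}\theta_n^p}{k(i)}\Bigr),
\]
and note that this is not the same as $\frac{1}{k(i)}(\theta_i-\overline{\theta}_{n-i}\theta_n^p)$. So Corollary~\ref{Cor 3}, divided by $k(i)$, does not apply verbatim, and this scaling mismatch is the main obstacle. One route is to write
\[
k(i)\theta_i-\overline{\theta}_{n-i}\theta_n^p=(\theta_i-\overline{\theta}_{n-i}\theta_n^p)+(k(i)-1)\theta_i
\]
and estimate, but that seems lossy. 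Instead, I would argue from Proposition~\ref{Prop 2}. The functions $\Phi^{(i)}_1,\Phi^{(i)}_2$ are built so that their nonnegativity for all $\alpha\in\overline{\mathbb D}$ is the tetrablock criterion of \cite{Abouhajar} in its "$1-|x_3|^2+|x_1|^2-|x_2|^2-2|x_1-\overline x_2x_3|\ge0$" form, up to scaling. Concretely, take $\alpha$ on the circle, so $|\alpha|=1$, and divide $\Phi^{(i)}_1$ by $k(i)^2$. This gives
\[
1-|x_3|^2+|x_1|^2-|x_2|^2-2\operatorname{Re}\bigl(\alpha^i(x_1-\overline{x}_2x_3)\bigr)\ge 0 .
\]
Here the cross term in \eqref{Phi_1} has $\overline\theta_{n-i}\theta_n^p$ multiplied by $k(i)$, so dividing by $k(i)^2$ produces exactly $\overline x_2x_3$ with the correct normalization. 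I must check this carefully against the formula, since that is where the scaling has to come out right.

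Optimizing over $\alpha\in\mathbb T$ gives
\[
1-|x_3|^2+|x_1|^2-|x_2|^2\ge 2|x_1-\overline{x}_2x_3|,
\]
and $\Phi^{(i)}_2$ gives the symmetric inequality with $x_1$ and $x_2$ interchanged. These two inequalities are among the equivalent conditions of \cite[Theorem~2.4]{Abouhajar} for membership in $\overline{\mathbb E}$, which finishes the proof. If the normalization in \eqref{Phi_1} turns out not to match, the fallback is to go back to the points $z_j$ and verify the tetrablock condition $1-x_1z-x_2w+x_3zw\ne0$ for $|z|,|w|<1$ directly. I expect that to be harder.
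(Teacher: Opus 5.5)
Your final argument is valid, but the ``scaling mismatch'' that led you away from the direct route is an algebra slip. Since $k(i)$ is real and $x_3=\theta_n^p$ is not rescaled, $\overline{x}_2x_3=\overline{\theta}_{n-i}\theta_n^p/k(i)$ carries a single factor $1/k(i)$. Hence
\[
x_1-\overline{x}_2x_3=\frac{\theta_i-\overline{\theta}_{n-i}\theta_n^p}{k(i)},
\qquad
x_2-\overline{x}_1x_3=\frac{\theta_{n-i}-\overline{\theta}_{i}\theta_n^p}{k(i)},
\]
and Corollary~\ref{Cor 3} divided by $k(i)$ reads verbatim $|x_1-\overline{x}_2x_3|+|x_2-\overline{x}_1x_3|\le 1-|x_3|^2$. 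That is exactly the paper's proof: it then cites \cite[Theorem~2.4]{Abouhajar}. Your own normalization check of $\Phi^{(i)}_1/k(i)^2$ produces this same quantity $(\theta_i-\overline{\theta}_{n-i}\theta_n^p)/k(i)$. So the two computations in your proposal contradict each other, and the second one is right.

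With that corrected, your $\Phi$ route is a minor variant of the paper's proof rather than a different one. Evaluated at $\alpha\in\mathbb T$ and optimized over $\alpha$, the $\Phi^{(i)}_1$ and $\Phi^{(i)}_2$ inequalities give
\[
1-|x_3|^2+|x_1|^2-|x_2|^2\ge 2|x_1-\overline{x}_2x_3|,
\qquad
1-|x_3|^2+|x_2|^2-|x_1|^2\ge 2|x_2-\overline{x}_1x_3|.
\]
Corollary~\ref{Cor 3}, divided by $k(i)$, is half their sum; you use only the first inequality, together with $|x_2|\le1$.

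Where your write-up does better is in checking $|x_1|,|x_2|,|x_3|\le1$ separately. The summed inequality alone does not imply membership in $\overline{\mathbb E}$ when $|x_3|=1$. For example, $(2,2,1)$ satisfies it with both sides equal to $0$, yet $\overline{\mathbb E}\subseteq\overline{\mathbb D}^{3}$. The same example shows that your aside ``it suffices to have $|x_3|\le1$ together with the first inequality'' is false. So a supplementary bound such as $|x_1|\le1$ is needed when $|\theta_n|=1$. The paper's proof does not mention it, but the bound holds by \eqref{Estimate theta_i}, so nothing breaks.
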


\begin{proof}
Let $(\theta_1,\dots,\theta_n)\in\overline{\mathbf{\Theta}}_n$. By Corollary~\ref{Cor 3},
\[
|\theta_i-\overline{\theta}_{n-i}\theta_n^p|
+
|\theta_{n-i}-\overline{\theta}_i\theta_n^p|
\leq
k(i)\bigl(1-|\theta_n^p|^2\bigr).
\]
Dividing both sides by $k(i)$ gives
\[
\left|
\frac{\theta_i}{k(i)}
-
\frac{\overline{\theta}_{n-i}}{k(i)}\theta_n^p
\right|
+
\left|
\frac{\theta_{n-i}}{k(i)}
-
\frac{\overline{\theta}_i}{k(i)}\theta_n^p
\right|
\leq
1-|\theta_n^p|^2.
\]
Hence, by \cite[Theorem~2.4]{Abouhajar},
$
\left(
\frac{\theta_i}{k(i)},
\frac{\theta_{n-i}}{k(i)},
\theta_n^p
\right)
\in
\overline{\mathbb E},
$
for every $1\leq i\leq n-1$.
\end{proof}
The remaining part of this section, we establish several connections between the distinguished boundaries of $\mathbf{\Theta}_n$, the symmetrized polydisc, and the tetrablock. These relationships will play an important role in the study of $\mathbf{\Theta}_n$-inner functions in the subsequent sections.

\begin{prop}\label{Prop 3}
Let $(\theta_1,\dots,\theta_n)\in\mathbb C^n$. Then the following are equivalent.
\begin{enumerate}
\item $(\theta_1,\dots,\theta_n)\in b\mathbf{\Theta}_n$.

\item $(\theta_1,\dots,\theta_{n-1},\theta_n^p)\in b\Gamma_n$.
\end{enumerate}
\end{prop}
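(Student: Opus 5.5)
I will use concrete descriptions of both distinguished boundaries rather than abstract Shilov-boundary arguments. For the symmetrized polydisc the standard description is
\[
b\Gamma_n=\mathbf s(\mathbb T^n)=\{\mathbf s(w_1,\dots,w_n): |w_j|=1\}.
\]
For $\mathbf{\Theta}_n$ I take the description of \cite{Biswas 2}:
\[
b\mathbf{\Theta}_n=\theta(\mathbb T^n),
\]
that is, the points $\theta(z_1,\dots,z_n)$ with all $|z_j|=1$. If only the Shilov-boundary definition is available, I will first note that $\theta:\overline{\mathbb D}^n\to\overline{\mathbf{\Theta}}_n$ is a proper polynomial map. It sends $\mathbb T^n$ into the points where $|\theta_n|=1$, and peak functions of $\overline{\mathbb D}^n$ can be symmetrized. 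That identification is the one place the proof leans on \cite{Biswas 2}, and I expect it to be the main obstacle. The rest is the root-lifting argument already used in Lemma~\ref{Lem 1}.

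For (1)$\Rightarrow$(2), write $(\theta_1,\dots,\theta_n)=\theta(z_1,\dots,z_n)$ with $z_j\in\mathbb T$. Exactly as in the proof of Lemma~\ref{Lem 1},
\[
\pi_p(\theta_1,\dots,\theta_n)=(\theta_1,\dots,\theta_{n-1},\theta_n^p)=\mathbf s(z_1^m,\dots,z_n^m).
\]
Since $|z_j^m|=1$, this point lies in $\mathbf s(\mathbb T^n)=b\Gamma_n$.

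For (2)$\Rightarrow$(1), suppose $(\theta_1,\dots,\theta_{n-1},\theta_n^p)=\mathbf s(w_1,\dots,w_n)$ with $w_j\in\mathbb T$.

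\begin{itemize}
\item First choose $z_j\in\mathbb T$ with $z_j^m=w_j$. Then $\theta_i=s_i(z_1^m,\dots,z_n^m)$ for $i\le n-1$.
\item Next, $\theta_n^p=w_1\cdots w_n=\bigl((z_1\cdots z_n)^q\bigr)^p$. Hence $\theta_n=\omega\,(z_1\cdots z_n)^q$ for some $p$-th root of unity $\omega$.
\item To absorb $\omega$, replace $z_1$ by $\zeta z_1$, where $\zeta$ is an $m$-th root of unity with $\zeta^q=\omega$. Such a $\zeta$ exists because $\zeta=e^{2\pi i k/m}$ gives $\zeta^q=e^{2\pi i k/p}$, which covers all $p$-th roots of unity.
\item This replacement leaves every $z_j^m$ unchanged and multiplies $(z_1\cdots z_n)^q$ by $\omega$. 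Therefore $(\theta_1,\dots,\theta_n)=\theta(\zeta z_1,z_2,\dots,z_n)\in\theta(\mathbb T^n)=b\mathbf{\Theta}_n$.
\end{itemize}

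This root-of-unity adjustment is the only delicate algebraic point. It also shows that the surjectivity in Remark~\ref{Rem 1} holds at the level of distinguished boundaries.

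If I must work from the polynomial characterization in \eqref{P} instead, the same argument applies. Membership of $(\theta_1,\dots,\theta_n)$ in $b\mathbf{\Theta}_n$ amounts to all roots of $P$ lying on $\mathbb T$. Since $P$ has coefficients $(\theta_1,\dots,\theta_{n-1},\theta_n^p)$, this is literally the root description of $b\Gamma_n$ applied to $\pi_p(\theta)$. The equivalence then follows directly, once the characterization of $b\mathbf{\Theta}_n$ is granted.
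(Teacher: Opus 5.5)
Your proof is correct. The forward direction is the paper's argument in parametrized form: the paper combines Lemma~\ref{Lem 1 Bar} with $|\theta_n^p|=1$ and the criterion $b\Gamma_n=\Gamma_n\cap\{|s_n|=1\}$. The reverse direction, however, takes a different route. The paper never lifts points. From $(\theta_1,\dots,\theta_{n-1},\theta_n^p)\in b\Gamma_n$ it reads off $|\theta_n^p|=1$, $\theta_i=\overline{\theta}_{n-i}\theta_n^p$ and $(\gamma_1\theta_1,\dots,\gamma_{n-1}\theta_{n-1})\in\Gamma_{n-1}$. It then notes $|\theta_n|=1$ and observes that these are exactly the conditions in the algebraic description of $b\mathbf{\Theta}_n$ in \cite[Theorem~2.5]{Biswas 2}. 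This is shorter, but the one nontrivial point, that the $p$-th power in the last coordinate can be undone inside $\theta(\overline{\mathbb D}^n)$, is hidden in the cited theorem.

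Your $m$-th-root lift with the twist $\zeta^m=1$, $\zeta^q=\omega$ proves that point by hand. It needs only the parametrized descriptions $b\Gamma_n=\mathbf s(\mathbb T^n)$ and $b\mathbf{\Theta}_n=\theta(\mathbb T^n)$, and it avoids the $\Gamma_{n-1}$ conditions entirely. Run over $\mathbb D$ or $\overline{\mathbb D}$ instead of $\mathbb T$, the same lift also gives directly the equivalences (1)$\Leftrightarrow$(2) in Theorems~\ref{Theta_n Characterization} and~\ref{Bar Theta_n Charaterization}, which the paper obtains through Costara's results. It likewise gives the surjectivity in Remark~\ref{Rem 1}, which the paper asserts without proof.

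Two small points:
\begin{enumerate}
\item The cited theorem is phrased as $b\mathbf{\Theta}_n=\overline{\mathbf{\Theta}}_n\cap\{|\theta_n|=1\}$. You should add the one line that $|z_1\cdots z_n|^q=1$ with $z\in\overline{\mathbb D}^n$ forces $z\in\mathbb T^n$.
\item Your closing paragraph is not an independent route. Via $b\Gamma_n=\mathbf s(\mathbb T^n)$, the statement that $b\mathbf{\Theta}_n$ consists of the points for which all roots of $P$ lie on $\mathbb T$ is just a restatement of the proposition. The peak-function sketch is likewise not a proof, but your argument does not need it.
\end{enumerate}
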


\begin{proof}
$(1)\Rightarrow(2)$.
Suppose that $(\theta_1,\dots,\theta_n)\in b\mathbf{\Theta}_n$. By \cite[Theorem~2.5]{Biswas 2},
\[
(\theta_1,\dots,\theta_n)\in\overline{\mathbf{\Theta}}_n
\quad\text{and}\quad
|\theta_n|=1.
\]
Lemma~\ref{Lem 1 Bar} therefore implies that
$
(\theta_1,\dots,\theta_{n-1},\theta_n^p)\in\Gamma_n.$
Since $|\theta_n|=1$, we also have $|\theta_n^p|=1$. Hence, by \cite[Theorem~2.4]{Biswas},
$
(\theta_1,\dots,\theta_{n-1},\theta_n^p)\in b\Gamma_n.$

$(2)\Rightarrow(1)$:
Assume that $
(\theta_1,\dots,\theta_{n-1},\theta_n^p)\in b\Gamma_n.$
By \cite[Theorem~2.4]{Biswas},
\[
|\theta_n^p|=1,\quad
\theta_i=\overline{\theta}_{n-i}\theta_n^p,\quad
(\gamma_1\theta_1,\dots,\gamma_{n-1}\theta_{n-1})\in\Gamma_{n-1},
\]
where $\gamma_i=\frac{n-i}{n}$ for $1\leq i\leq n-1$. Since
$|\theta_n^p|=1$, it follows that $|\theta_n|=1$. Therefore,
\cite[Theorem~2.5]{Biswas 2} yields
$
(\theta_1,\dots,\theta_n)\in b\mathbf{\Theta}_n.$
This completes the proof.
\end{proof}

The next proposition shows that the distinguished boundary is preserved under the embedding constructed in Lemma~\ref{Lem 2 Bar}.

\begin{prop}\label{Prop 4}
Let $(\theta_1,\dots,\theta_n)\in b\mathbf{\Theta}_n$. Then
\[
(\alpha^m+\theta_1,\,
\alpha^m\theta_1+\theta_2,\,
\dots,\,
\alpha^m\theta_{n-1}+\theta_n^p,\,
\alpha^{m/p}\theta_n)
\in b\mathbf{\Theta}_{n+1}
\]
for every $\alpha\in\mathbb T$.
\end{prop}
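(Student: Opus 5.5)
The cleanest route is through the characterization of the distinguished boundary quoted in the proof of Proposition~\ref{Prop 3}: by \cite[Theorem~2.5]{Biswas 2}, a point of $\mathbb C^{n+1}$ lies in $b\mathbf{\Theta}_{n+1}$ exactly when it lies in $\overline{\mathbf{\Theta}}_{n+1}$ and its last coordinate is unimodular. So the proof splits into two checks, carried out in this order.

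\emph{Step 1: membership in the closure.} Since $(\theta_1,\dots,\theta_n)\in b\mathbf{\Theta}_n\subseteq\overline{\mathbf{\Theta}}_n$ and $\alpha\in\mathbb T\subseteq\overline{\mathbb D}$, Lemma~\ref{Lem 2 Bar} applies directly. It gives
\[
(\alpha^m+\theta_1,\,\alpha^m\theta_1+\theta_2,\,\dots,\,\alpha^m\theta_{n-1}+\theta_n^p,\,\alpha^{m/p}\theta_n)\in\overline{\mathbf{\Theta}}_{n+1}.
\]

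\emph{Step 2: the last coordinate.} By \cite[Theorem~2.5]{Biswas 2} again, $(\theta_1,\dots,\theta_n)\in b\mathbf{\Theta}_n$ gives $|\theta_n|=1$. Since $|\alpha|=1$, we get $|\alpha^{m/p}\theta_n|=|\alpha|^{m/p}|\theta_n|=1$. Note that $m/p=q$ is a positive integer, so $\alpha^{m/p}$ is unambiguous.

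Combining the two steps with \cite[Theorem~2.5]{Biswas 2} applied in dimension $n+1$ yields the claim.

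The only point that needs care is that the characterization in \cite[Theorem~2.5]{Biswas 2} holds for every dimension with the same $m$ and $p$. This is what makes it legitimate to use it both for $\mathbf{\Theta}_n$ and for $\mathbf{\Theta}_{n+1}$.

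As an independent check that avoids citing that theorem in dimension $n+1$, one can argue through the roots. Write $\theta_i=s_i(z_1^m,\dots,z_n^m)$ for $1\le i\le n-1$ and $\theta_n=(z_1\cdots z_n)^q$ with $z_j\in\overline{\mathbb D}$. Then $|\theta_n|=1$ forces every $|z_j|=1$. Setting $z_{n+1}=\alpha\in\mathbb T$, the computation in the proof of Lemma~\ref{Lem 2} shows that the new point is $\theta(z_1,\dots,z_{n+1})$ with all $z_j\in\mathbb T$, which is the image of the torus. That image lies in the distinguished boundary.

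I expect no real obstacle here; both steps are immediate once the cited characterization is in hand.
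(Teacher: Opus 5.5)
Your proof is correct and is essentially the paper's argument: use \cite[Theorem~2.5]{Biswas 2} to get $|\theta_n|=1$ and membership in $\overline{\mathbf{\Theta}}_n$, apply Lemma~\ref{Lem 2 Bar} to land in $\overline{\mathbf{\Theta}}_{n+1}$, observe $|\alpha^{m/p}\theta_n|=1$, and invoke the same characterization in dimension $n+1$. Your root-based ``independent check'' does not actually avoid that theorem, because the inclusion $\theta(\mathbb T^{n+1})\subseteq b\mathbf{\Theta}_{n+1}$ it relies on is itself part of what the characterization supplies.
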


\begin{proof}
Since $(\theta_1,\dots,\theta_n)\in b\mathbf{\Theta}_n$, \cite[Theorem~2.5]{Biswas 2} implies that
\[
(\theta_1,\dots,\theta_n)\in\overline{\mathbf{\Theta}}_n
\quad\text{and}\quad
|\theta_n|=1.
\]
Hence, by Lemma~\ref{Lem 2 Bar},
$
(\alpha^m+\theta_1,\,
\alpha^m\theta_1+\theta_2,\,
\dots,\,
\alpha^m\theta_{n-1}+\theta_n^p,\,
\alpha^{m/p}\theta_n)
\in\overline{\mathbf{\Theta}}_{n+1}$
for every $\alpha\in\overline{\mathbb D}$.
If $\alpha\in\mathbb T$, then
$
|\alpha^{m/p}\theta_n|=1.$
Therefore, another application of \cite[Theorem~2.5]{Biswas 2} shows that
\[
(\alpha^m+\theta_1,\,
\alpha^m\theta_1+\theta_2,\,
\dots,\,
\alpha^m\theta_{n-1}+\theta_n^p,\,
\alpha^{m/p}\theta_n)
\in b\mathbf{\Theta}_{n+1},
\]
for every $\alpha\in\mathbb T$.
\end{proof}
The next proposition relates the distinguished boundary of $\mathbf{\Theta}_n$ to that of the tetrablock.

\begin{prop}\label{Prop 5}
Let $
k(i)=\binom{n-1}{i}+\binom{n-1}{n-i}.$
Then $(\theta_1,\dots,\theta_n)\in b\mathbf{\Theta}_n$ if and only if
$
\left(
\frac{\theta_i}{k(i)},
\frac{\theta_{n-i}}{k(i)},
\theta_n^p
\right)\in b\mathbb E
$
and
$
(\gamma_1\theta_1,\dots,\gamma_{n-1}\theta_{n-1})
\in\Gamma_{n-1},$
where $\gamma_i=\frac{n-i}{n}$ for $1\leq i\leq n-1$.
\end{prop}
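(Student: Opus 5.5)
The plan is to route everything through Proposition~\ref{Prop 3}, which reduces membership in $b\mathbf{\Theta}_n$ to membership of $(\theta_1,\dots,\theta_{n-1},\theta_n^p)$ in $b\Gamma_n$. Two characterizations from the cited papers do the rest. The first is \cite[Theorem~2.4]{Biswas}: a point $(s_1,\dots,s_n)$ lies in $b\Gamma_n$ if and only if $|s_n|=1$, $s_i=\overline{s}_{n-i}s_n$ for every $i$, and $(\gamma_1 s_1,\dots,\gamma_{n-1}s_{n-1})\in\Gamma_{n-1}$. The second is the description of $b\mathbb E$ from \cite{Abouhajar}: $(x_1,x_2,x_3)\in b\mathbb E$ if and only if $x_1=\overline{x}_2x_3$, $|x_3|=1$ and $|x_2|\le 1$. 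Throughout, the statement is read with the tetrablock condition required for every $1\le i\le n-1$.

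\emph{Forward direction.} Let $(\theta_1,\dots,\theta_n)\in b\mathbf{\Theta}_n$. Proposition~\ref{Prop 3} and \cite[Theorem~2.4]{Biswas} give three facts:
\begin{itemize}
\item $|\theta_n^p|=1$;
\item $\theta_i=\overline{\theta}_{n-i}\theta_n^p$ for each $i$;
\item $(\gamma_1\theta_1,\dots,\gamma_{n-1}\theta_{n-1})\in\Gamma_{n-1}$, which is the second required condition.
\end{itemize}
Set $x_1=\theta_i/k(i)$, $x_2=\theta_{n-i}/k(i)$ and $x_3=\theta_n^p$. Since $k(i)$ is real, dividing the second fact by $k(i)$ gives $x_1=\overline{x}_2x_3$, and $|x_3|=1$ holds. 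The bound $|x_2|\le1$ is exactly \eqref{Estimate theta_i} applied to the index $n-i$, using $k(n-i)=k(i)$. (Alternatively, Lemma~\ref{Lem 3} places the point in $\overline{\mathbb E}$, and $|x_3|=1$ then moves it to $b\mathbb E$ via the same theorem of \cite{Abouhajar}.) Hence $\bigl(\theta_i/k(i),\theta_{n-i}/k(i),\theta_n^p\bigr)\in b\mathbb E$ for every $i$.

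\emph{Reverse direction.} Assume both conditions. For each $i$, the characterization of $b\mathbb E$ applied to the triple gives $|\theta_n^p|=1$ and $\theta_i/k(i)=\overline{\theta_{n-i}/k(i)}\,\theta_n^p$. Multiplying by the real number $k(i)$ yields $\theta_i=\overline{\theta}_{n-i}\theta_n^p$ for all $1\le i\le n-1$. Together with the hypothesis $(\gamma_1\theta_1,\dots,\gamma_{n-1}\theta_{n-1})\in\Gamma_{n-1}$, this verifies all three conditions of \cite[Theorem~2.4]{Biswas}. Therefore $(\theta_1,\dots,\theta_{n-1},\theta_n^p)\in b\Gamma_n$, and Proposition~\ref{Prop 3} gives $(\theta_1,\dots,\theta_n)\in b\mathbf{\Theta}_n$.

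\emph{Main obstacle.} The only delicate step is choosing the right form of the $b\mathbb E$ characterization in \cite{Abouhajar}, since that theorem lists several equivalent forms. I will use the version $x_1=\overline{x}_2x_3$, $|x_3|=1$, $|x_2|\le1$. It makes the forward direction depend on the a priori bound \eqref{Estimate theta_i} with $k(n-i)=k(i)$, and it makes the reverse direction a one-line algebraic deduction. A secondary point to state explicitly is that the tetrablock condition is needed for all $i$ at once: otherwise the symmetry relation $\theta_i=\overline{\theta}_{n-i}\theta_n^p$ would not be obtained for every index.
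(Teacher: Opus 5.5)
Your proof is correct and follows essentially the paper's route: both reduce the statement to the algebraic description of the distinguished boundary ($|\theta_n|=1$, $\theta_i=\overline{\theta}_{n-i}\theta_n^p$ for all $i$, and $(\gamma_1\theta_1,\dots,\gamma_{n-1}\theta_{n-1})\in\Gamma_{n-1}$), combined with the characterization of $b\mathbb E$ in \cite{Abouhajar}. The differences are cosmetic. You pass through Proposition~\ref{Prop 3} and the $b\Gamma_n$ criterion of \cite{Biswas} instead of citing \cite[Theorem~2.5]{Biswas 2} directly. In the forward direction you check $b\mathbb E$ membership through the bound $|\theta_{n-i}|\le k(n-i)=k(i)$, rather than through Lemma~\ref{Lem 3} and \cite[Lemma~2.6]{Biswas 2}.
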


\begin{proof}
Suppose that $(\theta_1,\dots,\theta_n)\in b\mathbf{\Theta}_n$. By \cite[Theorem~2.5]{Biswas 2},
\[
(\theta_1,\dots,\theta_n)\in\overline{\mathbf{\Theta}}_n
\quad\text{and}\quad
|\theta_n|=1.
\]
Lemma~\ref{Lem 3}, together with \cite[Lemma~2.6]{Biswas 2}, implies that
$
\left(
\frac{\theta_i}{k(i)},
\frac{\theta_{n-i}}{k(i)},
\theta_n^p
\right)\in b\mathbb E,$
and
$
(\gamma_1\theta_1,\dots,\gamma_{n-1}\theta_{n-1})
\in\Gamma_{n-1}.$

Conversely, assume that
$
\left(
\frac{\theta_i}{k(i)},
\frac{\theta_{n-i}}{k(i)},
\theta_n^p
\right)\in b\mathbb E$
and
$
(\gamma_1\theta_1,\dots,\gamma_{n-1}\theta_{n-1})
\in\Gamma_{n-1}.$
By \cite[Theorem~7.1]{Abouhajar},
\[
\theta_i=\overline{\theta}_{n-i}\theta_n^p,
\quad
|\theta_n|=1,
\]
for $1\leq i\leq n-1$. Combining these identities with the assumption
$
(\gamma_1\theta_1,\dots,\gamma_{n-1}\theta_{n-1})
\in\Gamma_{n-1},$
we obtain, by \cite[Theorem~2.5]{Biswas 2},
$
(\theta_1,\dots,\theta_n)\in b\mathbf{\Theta}_n.$
This completes the proof.
\end{proof}

\section{Characterization of $\mathbf{\Theta}_n$ by a Rational Function on $\mathbb{D}$}\label{Characterizations}

In this section, we obtain a characterization of the domains
$\mathbf{\Theta}_n$ and $\overline{\mathbf{\Theta}}_n$
using a rational function on the unit disc.
The characterization is analogous to the classical criteria for the
symmetrized polydisc and will play an important role in the study of
$\mathbf{\Theta}_n$-inner functions developed later. Let $(\theta_1,\ldots,\theta_n)\in\mathbb C^n$, and let
$P$ be the polynomial defined in \eqref{P}. Define	
	\begin{equation}\label{Q}
		\begin{aligned}
			Q(z) &= \frac{d}{dz}(z^nP(1/z)) = n(-1)^n\theta^p_nz^{n-1} + (n-1)(-1)^{n-1}\theta_{n-1}z^{n-2} + \dots + (-\theta_1)
		\end{aligned}
	\end{equation}
	and
	\begin{equation}\label{R}
		\begin{aligned}
			R(z) &= z^{n-1}P^{\prime}(1/z) = n - (n-1)\theta_1z + \dots + (-1)^{n-1}\theta_{n-1}z^{n-1}
		\end{aligned}
	\end{equation}
	for $z \in \mathbb{C} \setminus \{0\}$. Let us define the rational function $f$ by
	\begin{equation}\label{f}
		\begin{aligned}
			f(z) &= \frac{Q(z)}{R(z)} =
			\frac{n(-1)^n\theta^p_n z^{n-1} + (n-1)(-1)^{n-1}\theta_{n-1} z^{n-2} + \dots + (- \theta_1)}{n - (n-1)\theta_1z + \dots + (-1)^{n-1}\theta_{n-1}z^{n-1}}.
		\end{aligned}
	\end{equation}
	This function plays a crucial role in characterizing $\mathbf{\Theta}_n$. The following theorem shows that membership in $\mathbf{\Theta}_n$
is completely determined by the contractivity of the rational function
$f$ on the closed unit disc.
	
	\begin{thm}\label{Theta_n Characterization}
		Let $(\theta_1, \dots, \theta_n) \in \mathbb{C}^n$ and $f$ be defined as in \eqref{f}. Then the following are equivalent:
		\begin{enumerate}
			\item $(\theta_1, \dots, \theta_n) \in \mathbf{\Theta}_n$.
			\item $(\theta_1,\ldots,\theta_{n-1},\theta_n^p)\in G_n.$
			\item $\displaystyle \sup_{|z| \leq 1} |f(z)| < 1$.
			\item $|\theta_n| < 1$ and there exists $(\alpha_1, \dots, \alpha_{n-1}) \in G_{n-1}$ such that $\theta_i = \alpha_i + \overline{
		\alpha}_{n-i}\theta^p_n$ for $1 \le i \le n-1$.
		\end{enumerate}
	\end{thm}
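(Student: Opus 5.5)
The plan is to reduce everything to the known characterizations of the open symmetrized polydisc $G_n$ through the map $\pi_p$ of Remark~\ref{Rem 1}, proving $(1)\Leftrightarrow(2)$ directly. Then $(2)\Leftrightarrow(3)$ and $(2)\Leftrightarrow(4)$ follow from the classical criteria for $G_n$ (Costara's theorem and its refinements, e.g.\ \cite{costara} and \cite[Theorem~3.3]{A. Pal}), applied to the point $s=(s_1,\dots,s_n)=(\theta_1,\ldots,\theta_{n-1},\theta_n^p)$.

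\emph{Step 1: $(1)\Leftrightarrow(2)$.} The implication $(1)\Rightarrow(2)$ is Lemma~\ref{Lem 1}. For the converse, suppose $(\theta_1,\dots,\theta_{n-1},\theta_n^p)=\mathbf s(w_1,\dots,w_n)$ with $w_j\in\mathbb D$. For each $j$ pick an $m$-th root $z_j\in\mathbb D$ of $w_j$; then $\theta_i=s_i(z_1^m,\dots,z_n^m)$ for $1\le i\le n-1$. The remaining issue is the last coordinate. We have $\big((z_1\cdots z_n)^q\big)^p=(z_1\cdots z_n)^m=\theta_n^p$, so $(z_1\cdots z_n)^q=\omega\theta_n$ for some $p$-th root of unity $\omega$. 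Replacing $z_1$ by $\zeta z_1$, where $\zeta^m=1$ and $\zeta^q=\overline{\omega}$, fixes this without changing any $z_j^m$. Such a $\zeta$ exists: take $\zeta=e^{2\pi i k/m}$, so $\zeta^q=e^{2\pi i k/p}$, and this runs over all $p$-th roots of unity. Hence $(\theta_1,\dots,\theta_n)=\theta(z_1,\dots,z_n)\in\mathbf{\Theta}_n$.

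\emph{Step 2: $(2)\Leftrightarrow(3)$.} Compare $f$ with the function attached to $s\in\mathbb C^n$ in the $G_n$ case. The polynomial $P$ in \eqref{P} is exactly $z^n-s_1z^{n-1}+\cdots+(-1)^ns_n$. Consequently $Q$ and $R$ in \eqref{Q}, \eqref{R} coincide with the polynomials used in the classical characterization of $G_n$: $s\in G_n$ if and only if $\sup_{|z|\le1}|f_s(z)|<1$, where $f_s=Q/R$ is built from $P$ as above. Invoking that theorem with $s=(\theta_1,\dots,\theta_{n-1},\theta_n^p)$ gives the equivalence immediately. In particular, nothing new about the $\theta$-structure is needed here.

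\emph{Step 3: $(2)\Leftrightarrow(4)$.} Use the $G_n$ criterion: $s\in G_n$ if and only if $|s_n|<1$ and there is $(\alpha_1,\dots,\alpha_{n-1})\in G_{n-1}$ with $s_i=\alpha_i+\overline{\alpha}_{n-i}s_n$. For $s$ as above, $|s_n|<1$ is equivalent to $|\theta_n|<1$, and the relation reads $\theta_i=\alpha_i+\overline{\alpha}_{n-i}\theta_n^p$, which is exactly $(4)$.

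The main obstacle is really only bookkeeping. One must confirm that the cited $G_n$ results are stated with exactly the normalizations of $Q$ and $R$ used here (the factor $n$, the signs, and the open versus closed disc in the supremum). One must also handle Step 1 carefully when $p\nmid$ the structure, i.e.\ check the root-of-unity adjustment. If the cited form of the $G_n$ criterion differs, for instance if it is stated as $\sup_{z\in\mathbb D}$ with $|s_n|<1$, I would pass between the versions using continuity of $f$ on $\overline{\mathbb D}$. The denominator $R$ has no zeros on $\overline{\mathbb D}$ when $s\in G_n$, by Gauss--Lucas applied to $P'$.
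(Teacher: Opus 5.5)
Your proof is correct, but it routes the argument differently from the paper.

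\textbf{The paper's route.} The paper proves $(1)\Leftrightarrow(3)$ directly.
For $(1)\Rightarrow(3)$ it uses Gauss--Lucas to keep the zeros of $R$ off $\overline{\mathbb D}$. It then uses the identity $zP'(z)/(nP(z))=\frac1n\sum_i(1-w_i/z)^{-1}$, whose real part exceeds $\frac12$ on $\mathbb T$, to get $|f|<1$ there.
For $(3)\Rightarrow(1)$ it excludes zeros of $R$ in $\overline{\mathbb D}$ by a multiplicity argument, and then uses Rouch\'e to put all zeros of $P$ in $\mathbb D$.
Only $(2)\Leftrightarrow(3)$ and $(2)\Leftrightarrow(4)$ are quoted from Costara, as in your Steps 2--3.

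\textbf{Your route.} You prove $(1)\Leftrightarrow(2)$ algebraically and delegate all the analysis to the $G_n$ criteria. This works because $f$ depends on $\theta$ only through $\pi_p(\theta)$.
Your root-of-unity adjustment is sound: $\zeta=e^{2\pi ik/m}$ satisfies $\zeta^m=1$, and $\zeta^q=e^{2\pi ik/p}$ ranges over all $p$-th roots of unity. The case $\theta_n=0$ is trivial.
This lifting is exactly the ingredient the paper uses implicitly at the end of $(3)\Rightarrow(1)$. There, ``all zeros of $P$ lie in $\mathbb D$'' is converted into membership in $\mathbf{\Theta}_n$ via the zero-set description quoted in the introduction. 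Your argument also justifies the surjectivity asserted without proof in Remark~\ref{Rem 1}.

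\textbf{Comparison.} The paper's route gives a self-contained analytic proof of $(1)\Leftrightarrow(3)$. However, since Costara's theorem is cited anyway for $(2)\Leftrightarrow(3)$, that computation is logically redundant. Your version is shorter and isolates the one genuinely new point, namely $\mathbf{\Theta}_n=\pi_p^{-1}(G_n)$.

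A minor wording slip: the zero-freeness of $R$ on $\overline{\mathbb D}$ comes from Gauss--Lucas applied to $P$ (the zeros of $P'$ lie in the convex hull of those of $P$), not to $P'$.
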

	
	\begin{proof}
		$(1) \Rightarrow (3):$ Suppose that $(\theta_1,\ldots,\theta_n)\in\mathbf{\Theta}_n$. Then every zero of $P$ lies in $\mathbb D$.
By Lucas theorem, every zero of $P'$ also lies in $\mathbb D$.
Consequently, $P'(z)\neq0$ for every $z\in\mathbb C\setminus\mathbb D$, and hence
$P'(1/z)\neq0$ in a neighbourhood of $\overline{\mathbb D}$.
Therefore, $R$ has no zeros in a neighbourhood of
$\overline{\mathbb D}$, so $f$ is holomorphic there. By the maximum modulus principle, it is enough to verify that
$|f(z)|<1$ for every $z\in\mathbb T$.
Since
\[
Q(z)=nz^{\,n-1}P(1/z)-z^{\,n-2}P'(1/z),
\]
we obtain
		\begin{equation}\label{E 2.1}
			\begin{aligned}
				|f(z)| &=
				\left| \frac{nz^{n-1}P(1/z) - z^{n-2}P^{\prime}(1/z)}{z^{n-1}P^{\prime}(1/z)} \right|\\
				&= \left| n\frac{P(1/z)}{P^{\prime}(1/z)} - \frac{1}{z} \right|\\
				&= \left| n\frac{P(\bar{z})}{\bar{z}P^{\prime}(\bar{z})} - 1 \right|
			\end{aligned}
		\end{equation}
		for every $z\in \mathbb T.$ We show that $z \mapsto n\frac{P(z)}{zP^{\prime}(z)}$ takes $\mathbb{T}$ into $\{z \in \mathbb{C} : |z - 1| < 1\}$. Suppose that $w_1, \dots, w_n$ are the zeros of $P$. Then, for all $z \in \mathbb{T},$ we have,
		\begin{equation}\label{E 2.2}
			\begin{aligned}
				\frac{zP^{\prime}(z)}{nP(z)} &= \frac{1}{n} \sum_{i=1}^{n} \frac{z}{z - w_i} = \frac{1}{n} \sum_{i=1}^n \frac{1}{1 - w_i/z}.
			\end{aligned}
		\end{equation}
Hence,
$\frac{zP'(z)}{nP(z)}$
lies in the convex hull of the points
$
\left\{\frac{1}{1-w_i/z}:1\le i\le n\right\}.$
Since $|w_i/z|<1$ for every $z\in\mathbb T$,
the conformal map
\[
w\longmapsto\frac{1}{1-w}
\]
maps $\mathbb D$ onto the half-plane
$\{w\in\mathbb C:\operatorname{Re}w>\frac12\}$.
Therefore
$
\operatorname{Re}\left(\frac{zP'(z)}{nP(z)}\right)>\frac12.$
Applying the inverse conformal map gives
$
\left|
n\frac{P(z)}{zP'(z)}-1
\right|<1,$
which proves that
$|f(z)|<1$ on $\mathbb T$.
Hence
$
\sup_{|z|\le1}|f(z)|<1.$	
		
$(3) \Rightarrow (1):$ Suppose that
$
\sup_{|z|\le1}|f(z)|<1.$
We first prove that $R$ has no zeros on
$\overline{\mathbb D}$.
Assume, to the contrary, that
$R(z_0)=0$ for some
$z_0\in\overline{\mathbb D}$.
Since $R(0)=n$, necessarily $z_0\neq0$.
Because $f$ is bounded by one on
$\overline{\mathbb D}$,
we must also have
$Q(z_0)=0$.  From $R(z_0) = 0$ we get $P^{\prime}(1/z_0) = 0$. Since $Q(z_0)=0$, we have
\[
nz_0^{\,n-1}P(1/z_0)-z_0^{\,n-2}P'(1/z_0)=0.
\]
As $P'(1/z_0)=0$, it follows that
$
nz_0^{\,n-1}P(1/z_0)=0.$
Since $z_0\neq 0$, we conclude that $P(1/z_0)=0$. Hence $1/z_0$ is a common zero of $P$ and $P'$, and therefore it is a zero of $P$ of multiplicity $m\ge2$. Accordingly, we may write
\[
P(z)=\left(z-\frac1{z_0}\right)^m h(z),
\]
where $h$ is analytic and non-vanishing in a neighbourhood of $1/z_0$. Substituting this factorization into the definition of $f$, we obtain, in a neighbourhood of $z_0$,
\begin{equation}\label{E 2.3}
\begin{aligned}
f(z)
&=\frac{nz^{n-1}P(1/z)-z^{n-2}P'(1/z)}
        {z^{n-1}P'(1/z)}\\
&=\frac{nz^{n-1}(1/z-1/z_0)^m h(1/z)
-z^{n-2}\!\left(m(1/z-1/z_0)^{m-1}h(1/z)
+(1/z-1/z_0)^m h'(1/z)\right)}
{z^{n-1}\!\left(m(1/z-1/z_0)^{m-1}h(1/z)
-(1/z-1/z_0)^m h'(1/z)\right)}\\
&=\frac{nz^{n-1}(1/z-1/z_0)h(1/z)
-z^{n-2}\!\left(mh(1/z)
+(1/z-1/z_0)h'(1/z)\right)}
{z^{n-1}\!\left(m(1/z-1/z_0)h(1/z)
-(1/z-1/z_0)h'(1/z)\right)}.
\end{aligned}
\end{equation}		Observe that
		\begin{equation*}
			\begin{aligned}
				|f(z_0)| &=
				\left|\frac{-mz^{n-2}_0h(1/z_0)}{mz^{n-1}_0h(1/z_0)}\right| = |1/z_0| \ge 1.
			\end{aligned}
		\end{equation*}
This contradiction shows that
$R(z)\neq0$ for every
$z\in\overline{\mathbb D}$.
Consequently,
$
|Q(z)|<|R(z)|,z\in\mathbb T.$
Replacing $z$ by $1/z$ and multiplying by $|z|^{\,n}$
(which equals $1$ on $\mathbb T$)
gives
\begin{equation}\label{E 2.4}
			\begin{aligned}
				&|n(-1)^n\theta^p_n + (n-1)(-1)^{n-1}\theta_{n-1}z + \dots + (-\theta_1)z^{n-1}|\\
				&\quad <
				|nz^n - (n-1)\theta_1z^{n-1} + \dots + (-1)^{n-1}\theta_{n-1}z| \,\, \text{for all} \,\, z \in \mathbb{T}.
			\end{aligned}
		\end{equation}
		It now follows from Rouch\'e's theorem that the polynomials
\begin{equation}\label{E 2.5}
\begin{aligned}
&(n(-1)^n\theta_n^p +(n-1)(-1)^{n-1}\theta_{n-1}z+\cdots-\theta_1z^{n-1})\\
&\qquad +(nz^n-(n-1)\theta_1z^{n-1}+\cdots+(-1)^{n-1}\theta_{n-1}z)
\end{aligned}
\end{equation}
and
\begin{equation}\label{E 2.6}
nz^n-(n-1)\theta_1z^{n-1}+\cdots+(-1)^{n-1}\theta_{n-1}z
\end{equation}
have the same number of zeros in $\mathbb D$. Observe that the polynomial in \eqref{E 2.6} is precisely
$
z^nR(1/z).$
Since $R$ has no zeros in $\overline{\mathbb D}$, it follows that
$z^nR(1/z)$ has exactly $n$ zeros (counting multiplicity), all of which lie in $\mathbb D$.
On the other hand, the polynomial in \eqref{E 2.5} is simply $nP$.
Hence, by Rouch\'e's theorem, $nP$ also has $n$ zeros in $\mathbb D$.
Since $P$ is a polynomial of degree $n$, all of its zeros belong to $\mathbb D$.
Therefore, $
(\theta_1,\ldots,\theta_n)\in\mathbf{\Theta}_n.$

The equivalence of (2) and (3) follows from \cite[Theorem 3.1]{costara}.
Also, the equivalence of  $(2)$ and $(4)$ follows from \cite[Theorem 3.6]{costara}. 
This completes the proof.
	\end{proof}
	
	We now prove an analogous result for $\overline{\mathbf{\Theta}}_n$.
	
	\begin{thm}\label{Bar Theta_n Charaterization}
Let $(\theta_1,\ldots,\theta_n)\in\mathbb C^n$ and let $f$ be the rational function defined in \eqref{f}. Then the following are equivalent.
\begin{enumerate}
\item $(\theta_1,\ldots,\theta_n)\in\overline{\mathbf{\Theta}}_n$.
\item $(\theta_1,\ldots,\theta_{n-1},\theta_n^p)\in \Gamma_n.$
\item $\displaystyle\sup_{|z|\le1}|f(z)|\le1$.
\item $|\theta_n| \leq1$ and there exists $(\alpha_1, \dots, \alpha_{n-1}) \in \Gamma_{n-1}$ such that $\theta_i = \alpha_i + \overline{
		\alpha}_{n-i}\theta^p_n$ for $1 \le i \le n-1$.
\end{enumerate}
\end{thm}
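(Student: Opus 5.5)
The proof mirrors that of Theorem~\ref{Theta_n Characterization}. The cleanest route goes through condition (2) as a hub, since the equivalences $(2)\Leftrightarrow(3)$ and $(2)\Leftrightarrow(4)$ are closed-set versions of results of Costara: \cite[Theorem 3.2]{costara} (the $\Gamma_n$ analogue of Theorem 3.1) and \cite[Theorem 3.7]{costara} (the $\Gamma_n$ analogue of Theorem 3.6). For $(3)$ the supremum is read, as in Costara, over the points where $f$ is defined. The only genuinely new part is $(1)\Leftrightarrow(2)$.

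\textbf{$(1)\Rightarrow(2)$.} This is exactly Lemma~\ref{Lem 1 Bar}.

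\textbf{$(2)\Rightarrow(1)$.} Write $(\theta_1,\ldots,\theta_{n-1},\theta_n^p)=\mathbf s(w_1,\ldots,w_n)$ with $w_j\in\overline{\mathbb D}$. We want $z_j\in\overline{\mathbb D}$ with $z_j^m=w_j$ and $(z_1\cdots z_n)^q=\theta_n$.

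First choose arbitrary $m$-th roots $z_j$ of the $w_j$. Then
\[
\bigl((z_1\cdots z_n)^q\bigr)^p=w_1\cdots w_n=\theta_n^p ,
\]
so $(z_1\cdots z_n)^q=\omega\,\theta_n$ for some $p$-th root of unity $\omega$.

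Next correct the phase. Replace $z_1$ by $\zeta z_1$ with $\zeta^m=1$. This keeps $z_1^m$ unchanged and multiplies $(z_1\cdots z_n)^q$ by $\zeta^q$. As $\zeta$ runs over the $m$-th roots of unity, $\zeta^q$ runs over all $p$-th roots of unity, because $m=pq$. So we can arrange $\zeta^q=\bar\omega$, and then $(z_1,\ldots,z_n)$ maps to $(\theta_1,\ldots,\theta_n)$ under $\theta$.

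This also justifies the surjectivity claimed in Remark~\ref{Rem 1}. Equivalently, (1) just says that all zeros of $P$ in \eqref{P} lie in $\overline{\mathbb D}$, which is (2).

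\textbf{$(1)\Leftrightarrow(3)$ by a limiting argument.} If one wants this directly, as in Theorem~\ref{Theta_n Characterization}, use approximation. For $(1)\Rightarrow(3)$, take $r<1$. The point $\theta(rz_1,\ldots,rz_n)$ lies in $\mathbf{\Theta}_n$, so its $f_r$ satisfies $|f_r|<1$ on $\overline{\mathbb D}$. Then let $r\to1$; $f_r\to f$ pointwise wherever $R\neq0$, giving $|f|\le1$.

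For $(3)\Rightarrow(1)$, argue by contradiction. Suppose some zero $w$ of $P$ has $|w|>1$. Scaling, i.e. passing from $(\theta_i)$ to $(r^{mi}\theta_i)$ suitably, perturbs $f$ continuously. Alternatively use the identity $|f(z)|=|nP(\bar z)/(\bar zP'(\bar z))-1|$ on $\mathbb T$ together with the half-plane argument run in reverse.

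\textbf{Main obstacle.} The main obstacle in the direct route is the degenerate case where $R$ vanishes on $\mathbb T$, that is, $P'$ has zeros on the unit circle. There $f$ may have removable singularities and the Rouch\'e argument needs strict inequality. For this reason I would rely on Costara's closed-case theorem via (2) rather than redo the Rouch\'e argument, so that the bulk of the proof is the root-selection step above.
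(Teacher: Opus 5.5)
Your proof is correct, but it connects condition (1) to the others differently from the paper. The paper never proves $(1)\Leftrightarrow(2)$ directly. Instead it proves $(1)\Leftrightarrow(3)$ by dilation: for $0<r<1$, the point $(r\theta_1,r^2\theta_2,\ldots,r^{n-1}\theta_{n-1},r^{n/p}\theta_n)$ has associated function $f_r(z)=rf(rz)$. Theorem~\ref{Theta_n Characterization} therefore applies to it, and one passes to the limit using closedness of $\overline{\mathbf{\Theta}}_n$. For $(2)\Leftrightarrow(3)$ and $(2)\Leftrightarrow(4)$ the paper quotes Costara's Theorems 3.2 and 3.7, exactly as you do.

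Your root-selection step corrects the phase of $z_1$ by an $m$-th root of unity $\zeta$, whose powers $\zeta^q$ run through all $p$-th roots of unity because $m=pq$. It is purely algebraic and covers $\theta_n=0$ trivially. It also makes explicit a fact the paper uses only tacitly, in three places:
\begin{enumerate}
\item the surjectivity asserted in Remark~\ref{Rem 1};
\item the description of $\overline{\mathbf{\Theta}}_n$ through the zeros of $P$ in the introduction;
\item the final line of the $(3)\Rightarrow(1)$ step of Theorem~\ref{Theta_n Characterization}, where ``all zeros of $P$ lie in $\mathbb D$'' is turned into membership in $\mathbf{\Theta}_n$ (the same argument with $\mathbb D$ in place of $\overline{\mathbb D}$).
\end{enumerate}
Your route also shows that the whole theorem is just the $\Gamma_n$ theorem transported through $\pi_p$, since $f$ and condition (4) depend only on $(\theta_1,\ldots,\theta_{n-1},\theta_n^p)$. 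In exchange, the paper's route derives $(1)\Leftrightarrow(3)$ from the open case without invoking Costara's closed-case result.

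Your optional direct sketch of $(3)\Rightarrow(1)$ is not an argument as written. The missing ingredient is precisely the identity $f_r(z)=rf(rz)$, which gives $\sup_{|z|\le1}|f_r(z)|\le r<1$. Your main proof does not rely on that sketch, so this does not affect correctness.
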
	
	\begin{proof}
$(1) \Rightarrow (3):$ Suppose that $(\theta_1,\ldots,\theta_n)\in\overline{\mathbf{\Theta}}_n$.
Then every zero of $P$ lies in $\overline{\mathbb D}$.
Hence, by Lucas' theorem, every zero of $P'$ also lies in
$\overline{\mathbb D}$.
Consequently, $R(z)\neq0$ for every $z\in\mathbb D$. For $0<r<1$, the point
$
(r\theta_1,r^2\theta_2,\ldots,r^{n-1}\theta_{n-1},r^{n/p}\theta_n)$
belongs to $\mathbf{\Theta}_n$.
Therefore, by Theorem \ref{Theta_n Characterization},
\[
\left|
\frac{
n(-1)^nr^n\theta_n^pz^{n-1}
+(n-1)(-1)^{n-1}r^{n-1}\theta_{n-1}z^{n-2}
+\cdots-r\theta_1}
{
n-(n-1)r\theta_1z+\cdots+(-1)^{n-1}r^{n-1}\theta_{n-1}z^{n-1}}
\right|<1
\]
for every $z\in\overline{\mathbb D}$. Since $R(z)\neq0$ for $z\in\mathbb D$, letting $r\to1^{-}$ gives
$
|f(z)|\le1,\quad z\in\mathbb D.$
As $f$ is rational and has no poles in $\mathbb D$, the inequality extends to
$\overline{\mathbb D}$ by continuity. Hence
$
\sup_{|z|\le1}|f(z)|\le1.$	
		
$(3) \Rightarrow (1):$ Suppose that $
\sup_{|z|\le1}|f(z)|\le1.$
Let $0<r<1$. Then
\begin{align*}
&\sup_{|z|\le1}
\left|
\frac{
n(-1)^nr^n\theta_n^pz^{n-1}
+(n-1)(-1)^{n-1}r^{n-1}\theta_{n-1}z^{n-2}
+\cdots-r\theta_1}
{
n-(n-1)r\theta_1z+\cdots+(-1)^{n-1}r^{n-1}\theta_{n-1}z^{n-1}}
\right|  \\
&\qquad
=r\sup_{|w|\le r}|f(w)|
\le r<1.
\end{align*}
Hence, by Theorem \ref{Theta_n Characterization},
$
(r\theta_1,r^2\theta_2,\ldots,r^{n-1}\theta_{n-1},r^{n/p}\theta_n)
\in\mathbf{\Theta}_n.$
Letting $r\to1^{-}$ and using the closedness of
$\overline{\mathbf{\Theta}}_n$, we conclude that
$
(\theta_1,\ldots,\theta_n)\in\overline{\mathbf{\Theta}}_n.$

The equivalence of $(2)$ and $(3)$ follows from \cite[Theorem 3.2]{costara}. 
Also, the equivalence of  $(2)$ and $(4)$ follows from \cite[Theorem 3.7]{costara}. 
This completes the proof.
	
	\end{proof}
	
If $(\theta_1,\ldots,\theta_n)\in
\overline{\mathbf{\Theta}}_n\setminus\mathbf{\Theta}_n$,
then $R$ has no zeros in $\mathbb D$, although it may have zeros on
$\mathbb T$.
In that case, every zero of $R$ on $\mathbb T$ is also a zero of $Q$,
with at least the same multiplicity.
Hence the common factors of $Q$ and $R$ corresponding to zeros on
$\mathbb T$ cancel, and the resulting reduced rational function is
analytic on a neighbourhood of $\overline{\mathbb D}$.
Combining Theorems \ref{Theta_n Characterization}
and \ref{Bar Theta_n Charaterization}, we obtain a characterization
of the topological boundary
$\partial\mathbf{\Theta}_n$. The next corollary follows immediately from
Theorems~\ref{Theta_n Characterization} and
\ref{Bar Theta_n Charaterization}.
	\begin{cor}\label{Characterization of Bd Theta_n}
		Let $(\theta_1, \dots, \theta_n) \in \mathbb{C}^n$ and $f$ be as in \eqref{f}. Then the following are equivalent:
		\begin{enumerate}
			\item $(\theta_1, \dots, \theta_n) \in \partial\mathbf{\Theta}_n$.
			
			\item $\displaystyle \sup_{|z| \leq 1} |f(z)| = 1$.
		\end{enumerate}
	\end{cor}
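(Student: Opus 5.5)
The plan is to use that $\partial\mathbf{\Theta}_n=\overline{\mathbf{\Theta}}_n\setminus\mathbf{\Theta}_n$. Then the corollary reduces to the two characterizations already proved: Theorem~\ref{Theta_n Characterization} for the open set and Theorem~\ref{Bar Theta_n Charaterization} for its closure. Granting this identity, $(\theta_1,\dots,\theta_n)\in\partial\mathbf{\Theta}_n$ holds exactly when (a) $\sup_{|z|\le1}|f(z)|\le 1$ and (b) $\sup_{|z|\le1}|f(z)|<1$ fails. Together (a) and (b) say the supremum equals $1$, which gives the equivalence of (1) and (2) in both directions.

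First I justify the identity. Since $\theta$ is continuous and $\overline{\mathbb D}^{\,n}$ is compact, $\overline{\mathbf{\Theta}}_n=\theta(\overline{\mathbb D}^{\,n})$ is compact. It contains $\mathbf{\Theta}_n$, and every point $\theta(z)$ with $z\in\overline{\mathbb D}^{\,n}$ is a limit of $\theta(rz)$ as $r\to1^-$. Hence $\overline{\mathbf{\Theta}}_n$ is the topological closure of $\mathbf{\Theta}_n$, so $\partial\mathbf{\Theta}_n=\overline{\mathbf{\Theta}}_n\setminus\operatorname{int}\mathbf{\Theta}_n$.

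The main obstacle is showing that $\mathbf{\Theta}_n$ is open, so that its interior is itself. I would use the root characterization (or equivalence (1)$\Leftrightarrow$(2) of Theorem~\ref{Theta_n Characterization}): a point lies in $\mathbf{\Theta}_n$ iff all zeros of $P$ lie in $\mathbb D$. The coefficients of $P$ depend continuously on $(\theta_1,\dots,\theta_n)$, and the zeros of a monic polynomial depend continuously on its coefficients (Hurwitz/Rouch\'e). So this condition persists under small perturbations, and $\mathbf{\Theta}_n$ is open.

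One subtlety concerns the convention for $\sup|f|$ on $\overline{\mathbf{\Theta}}_n$ when $R$ has zeros on $\mathbb T$. As in the remark preceding the corollary, $f$ is understood in reduced form: common factors of $Q$ and $R$ on $\mathbb T$ are cancelled. Theorem~\ref{Bar Theta_n Charaterization} is applied with this same convention, so the supremum is finite and the argument above goes through.
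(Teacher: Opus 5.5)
Your proposal is correct and follows the paper's argument: write $\partial\mathbf{\Theta}_n=\overline{\mathbf{\Theta}}_n\setminus\mathbf{\Theta}_n$, then combine condition (3) of Theorems~\ref{Theta_n Characterization} and~\ref{Bar Theta_n Charaterization} to read boundary membership as $\sup_{|z|\le1}|f(z)|\le1$ but not $<1$. The only addition is that you justify this identity, namely that $\theta(\overline{\mathbb D}^{\,n})$ is the topological closure of $\mathbf{\Theta}_n$ and that $\mathbf{\Theta}_n$ is open, which the paper uses without comment.
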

	\begin{proof}
This follows immediately from Theorems
\ref{Theta_n Characterization} and
\ref{Bar Theta_n Charaterization}.
Indeed, $
\partial\mathbf{\Theta}_n
=
\overline{\mathbf{\Theta}}_n\setminus\mathbf{\Theta}_n,$
so $(\theta_1,\ldots,\theta_n)\in\partial\mathbf{\Theta}_n$ if and only if
\[\displaystyle
\sup_{|z|\le1}|f(z)|\le1
\quad\text{and}\quad
\sup_{|z|\le1}|f(z)|\not<1,
\]
which is equivalent to $
\displaystyle\sup_{|z|\le1}|f(z)|=1.$
\end{proof}
The following corollary provides a new characterization of the distinguished
boundary of $\mathbf{\Theta}_n$.

\begin{cor}\label{Characterization of bTheta_n}
Let $(\theta_1,\dots,\theta_n)\in\mathbb C^n$ and let $f$ be the rational
function defined by \eqref{f}. Then the following are equivalent.
\begin{enumerate}
\item $(\theta_1,\dots,\theta_n)\in b\mathbf{\Theta}_n$.

\item $|\theta_n|=1$ and $
|f(z)|=1, z\in\mathbb T.$
\end{enumerate}
\end{cor}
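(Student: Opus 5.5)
The plan is to transfer the statement to the symmetrized polydisc. By Proposition~\ref{Prop 3}, $(\theta_1,\dots,\theta_n)\in b\mathbf{\Theta}_n$ if and only if $(\theta_1,\dots,\theta_{n-1},\theta_n^p)\in b\Gamma_n$. The function $f$ in \eqref{f} depends on $(\theta_1,\dots,\theta_n)$ only through $(\theta_1,\dots,\theta_{n-1},\theta_n^p)$. Moreover, $|\theta_n|=1$ holds if and only if $|\theta_n^p|=1$. So it suffices to prove the corresponding statement for $\Gamma_n$: a point $s=(s_1,\dots,s_n)$ lies in $b\Gamma_n$ if and only if $|s_n|=1$ and $|f(z)|=1$ on $\mathbb T$. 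This is Costara's characterization of $b\Gamma_n$ (the companion of \cite[Theorem 3.2]{costara}). I would nevertheless sketch a direct argument in the language of the polynomial $P$, as follows.

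\emph{$(1)\Rightarrow(2)$.} By \cite[Theorem~2.5]{Biswas 2}, $|\theta_n|=1$ and the point lies in $\overline{\mathbf{\Theta}}_n$, so every zero $w_i$ of $P$ lies in $\overline{\mathbb D}$. Since the product of the zeros has modulus $|\theta_n^p|=1$, all $|w_i|=1$. Wherever defined on $\mathbb T$, the computation \eqref{E 2.1}--\eqref{E 2.2} gives
\[
\frac{zP'(z)}{nP(z)}=\frac1n\sum_i\frac{1}{1-w_i/z}.
\]
Each term $1/(1-w_i/z)$ with $|w_i/z|=1$ lies on the line $\operatorname{Re}=\tfrac12$, hence so does their average. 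The inverse map sends this line onto the circle $|w-1|=1$, so $|f(z)|=1$ at all $z\in\mathbb T$ with $R(z)\neq0$. The finitely many remaining points are handled by the remark after Theorem~\ref{Bar Theta_n Charaterization}: after cancelling common factors, $f$ extends continuously to all of $\mathbb T$, and the identity persists by continuity.

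\emph{$(2)\Rightarrow(1)$.} First show the point lies in $\overline{\mathbf{\Theta}}_n$ via Theorem~\ref{Bar Theta_n Charaterization}, which requires $\sup_{|z|\le1}|f|\le1$. This needs $f$ to have no poles in $\mathbb D$, since $|f|=1$ on $\mathbb T$ alone does not force this. Here is the main obstacle. I would try a winding/Rouché argument on the identity $nz^{n-1}P(1/z)=Q(z)+zR(z)$, which reads $nP(w)=w^nQ(1/w)+w^{n-1}R(1/w)$. I would combine this with the self-inversive structure forced by $|\theta_n^p|=1$ and $|f|=1$ on $\mathbb T$, which gives a relation $\theta_i=\overline{\theta}_{n-i}\theta_n^p$ (obtained by comparing $|Q|^2=|R|^2$ as trigonometric polynomials on $\mathbb T$). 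Given this symmetry, $P$ is self-inversive and $Q$ is essentially the reflection of $R$, so $f$ is a finite Blaschke-type quotient $c\,z^{n-1}\overline{R(1/\bar z)}/R(z)$.

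If the coefficient comparison proves messy, I would instead simply invoke Costara's $b\Gamma_n$ characterization together with Proposition~\ref{Prop 3}, as in the first paragraph. With the symmetry available, \cite[Theorem~2.4]{Biswas} still requires $(\gamma_1\theta_1,\dots,\gamma_{n-1}\theta_{n-1})\in\Gamma_{n-1}$, i.e.\ $R$ (the normalized derivative) has its zeros in the closed disc. This is exactly the statement that $f$ is pole-free in $\mathbb D$, and that is where the real work sits.
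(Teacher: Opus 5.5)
The gap is in (2)$\Rightarrow$(1), and it cannot be closed, because that implication is false as stated. Your (1)$\Rightarrow$(2) argument is correct. It places the zeros of $P$ on $\mathbb T$ and uses the line $\operatorname{Re}w=\tfrac12$. It also treats zeros of $R$ on $\mathbb T$ more carefully than the paper, which asserts that the denominator of $f$ never vanishes on $\mathbb T$; this already fails for $P(z)=(z-1)^n$, i.e.\ $z_1=\cdots=z_n=1$.

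Here is a counterexample to (2)$\Rightarrow$(1). Take $n=2$, any $m,p$, and $(\theta_1,\theta_2)=(3,1)$. Then $f(z)=\frac{2z-3}{2-3z}$. For $|z|=1$ we have $|2z-3|=|2-3\overline z|=|2-3z|$, so $|f|=1$ on $\mathbb T$, and $|\theta_2|=1$. But $|\theta_1|=3>k(1)=2$, so by \eqref{Estimate theta_i} the point is not even in $\overline{\mathbf{\Theta}}_2$; indeed $z^2-3z+1$ has the root $\frac{3+\sqrt5}{2}$. As you feared, $f$ has a pole at $\frac23\in\mathbb D$.

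The phenomenon is general. Your coefficient comparison does yield $\theta_i=\overline{\theta}_{n-i}\theta_n^p$, and under this symmetry $Q=(-1)^n\theta_n^pR^{\sim(n-1)}$ identically. Hence every self-inversive $P$ satisfies (2), since zeros of $R$ on $\mathbb T$ cancel against those of $R^{\sim(n-1)}$. An example is $P(z)=(z-2)(z-\tfrac12)(z-1)^{n-2}$ with $\theta_n=1$, even though $P$ has the zero $2\notin\overline{\mathbb D}$.

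Your fallback is unavailable as well. Taking $p=1$, the same example shows that $|s_n|=1$ together with $|f|=1$ on $\mathbb T$ does not characterize $b\Gamma_n$, so there is no such result of Costara to cite.

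What (2) cannot supply is the third condition in \cite[Theorem~2.5]{Biswas 2}, namely $(\gamma_1\theta_1,\dots,\gamma_{n-1}\theta_{n-1})\in\Gamma_{n-1}$. This says that $P'$ (not $R$, as you wrote) has all its zeros in $\overline{\mathbb D}$, i.e.\ that $R$ has no zeros in $\mathbb D$. The paper's proof has exactly the same hole: it simply asserts that $f$ is analytic on $\mathbb D$ before applying the maximum modulus principle.

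The corollary becomes true if (2) is strengthened to ``$|\theta_n|=1$, $f$ has no poles in $\mathbb D$, and $|f|=1$ on $\mathbb T$''. Then maximum modulus, Theorem~\ref{Bar Theta_n Charaterization} and \cite[Theorem~2.5]{Biswas 2} give (1). In the forward direction, the absence of poles follows from Theorem~\ref{Bar Theta_n Charaterization}, and the rest is your argument.
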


\begin{proof}
Suppose that $(\theta_1,\dots,\theta_n)\in b\mathbf{\Theta}_n$.
By \cite[Theorem~2.5]{Biswas 2},
\[
|\theta_n|=1,\quad
\theta_i=\overline{\theta}_{\,n-i}\theta_n^p,\quad
1\le i\le n-1.
\]
Moreover, $
(\gamma_1\theta_1,\dots,\gamma_{n-1}\theta_{n-1})\in\Gamma_{n-1},$
where $\gamma_i=\frac{n-i}{n}$.
Since the denominator of $f$ has no zeros on $\mathbb T$, for every
$z\in\mathbb T$ we obtain
\begin{align}
|f(z)|
&=
\left|
\frac{n(-1)^n\theta_n^pz^{\,n-1}
+(n-1)(-1)^{n-1}\theta_{n-1}z^{\,n-2}
+\cdots-\theta_1}
{n-(n-1)\theta_1z+\cdots+(-1)^{n-1}\theta_{n-1}z^{\,n-1}}
\right| \nonumber\\
&=
\left|
\frac{n(-1)^n\theta_n^pz^{\,n-1}
+(n-1)(-1)^{n-1}\overline{\theta}_1\theta_n^pz^{\,n-2}
+\cdots-\overline{\theta}_{\,n-1}\theta_n^p}
{n-(n-1)\theta_1z+\cdots+(-1)^{n-1}\theta_{n-1}z^{\,n-1}}
\right| \nonumber\\
&=
\left|
\frac{(-1)^n\theta_n^pz^{\,n-1}
\left(
n-(n-1)\overline{\theta}_1\overline z+\cdots
+(-1)^{n-1}\overline{\theta}_{\,n-1}\overline z^{\,n-1}
\right)}
{n-(n-1)\theta_1z+\cdots+(-1)^{n-1}\theta_{n-1}z^{\,n-1}}
\right| \nonumber\\
&=1,
\end{align}
because $|z|=|\theta_n|=1$.

Conversely, suppose that $|\theta_n|=1$ and
$|f(z)|=1$ for every $z\in\mathbb T$.
Since $f$ is analytic on $\mathbb D$, the maximum modulus principle
implies that
$
|f(z)|\le1,z\in\mathbb D.$
Hence,
$
\sup_{|z|\le1}|f(z)|\le1.$
By Theorem~\ref{Bar Theta_n Charaterization},
$
(\theta_1,\dots,\theta_n)\in\overline{\mathbf{\Theta}}_n.$
Finally, since $|\theta_n|=1$, the characterization of
$b\mathbf{\Theta}_n$ in \cite[Theorem~2.5]{Biswas 2} shows that
$
(\theta_1,\dots,\theta_n)\in b\mathbf{\Theta}_n.$
This completes the proof.
\end{proof}

	\section{$\mathbf{\Theta}_n$-Inner Functions}\label{Inner Function}

Inner functions play a fundamental role in complex function theory and operator theory, with important applications to interpolation, invariant subspaces, and model theory. Motivated by the classical notion of inner functions on the unit disc and its analogues on the symmetrized polydisc and the tetrablock, we introduce the class of $\mathbf{\Theta}_n$-inner functions. In this section, we define $\mathbf{\Theta}_n$-inner functions and investigate their basic properties. We establish connections between $\mathbf{\Theta}_n$-inner functions, $\Gamma_n$-inner functions \cite{costara}, and $\mathbb{E}$-inner (tetrablock-inner) functions \cite{Alsalhi}. We also construct explicit examples of $\mathbf{\Theta}_n$-inner functions and conclude with a complete characterization of rational $\mathbf{\Theta}_n$-inner functions.

\begin{defn}\label{Theta_n inner function}
A holomorphic map
$
\Theta=(\theta_1,\dots,\theta_n):\mathbb D\to \mathbf{\Theta}_n$
is called a \emph{$\mathbf{\Theta}_n$-inner function} if its radial boundary values satisfy
\[
\lim_{r\to1^-}\Theta(rz)\in b\mathbf{\Theta}_n
\quad\text{for almost every } z\in\mathbb T,
\]
with respect to the Lebesgue measure on $\mathbb T$.
\end{defn}
Throughout this section, we write
$
\theta_n^p(z):=(\theta_n(z))^p.$
	\subsection{Examples of $\mathbf{\Theta}_n$-Inner Functions}
We begin with a basic class of examples arising from finite Blaschke products.

\begin{exam}\label{Exam 1}
Let $B_1,\ldots,B_n$ be finite Blaschke products. Define the holomorphic map
$
\Theta_B:\mathbb D\longrightarrow \overline{\mathbf{\Theta}}_n$
by
\begin{equation}\label{Theta_B}
\Theta_B(z)
=
\big(
s_1(B_1^m(z),\ldots,B_n^m(z)),
\ldots,
s_{n-1}(B_1^m(z),\ldots,B_n^m(z)),
(B_1(z)\cdots B_n(z))^{m/p}
\big).
\end{equation}
We claim that $\Theta_B$ is a $\mathbf{\Theta}_n$-inner function. Indeed, since each finite Blaschke product maps $\mathbb D$ into itself, we have
\[
B_i(z)\in\mathbb D,\quad 1\le i\le n,\ z\in\mathbb D.
\]
Hence, $
\Theta_B(z)\in\mathbf{\Theta}_n,$
for every $z\in\mathbb D$. Therefore, $\Theta_B$ is a holomorphic map from $\mathbb D$ into $\overline{\mathbf{\Theta}}_n$.
Moreover, every finite Blaschke product is unimodular almost everywhere on the unit circle. Consequently,
\begin{equation}\label{Ex1}
|B_1(\zeta)\cdots B_n(\zeta)|
=
|B_1(\zeta)|\cdots |B_n(\zeta)|
=1,
\quad \text{for all}~ \zeta\in\mathbb T.
\end{equation}
It follows that
\[
|\theta_n(\zeta)|=1,
\quad \text{for a.e. } \zeta\in\mathbb T.
\]
Since $\Theta_B(\zeta)\in\overline{\mathbf{\Theta}}_n$ and its last coordinate has modulus one, we conclude from the characterization of the distinguished boundary that
\[
\Theta_B(\zeta)\in b\mathbf{\Theta}_n,
\quad \text{for a.e. } \zeta\in\mathbb T.
\]
Hence, $\Theta_B$ is a $\mathbf{\Theta}_n$-inner function.
In particular, when $n=2$, if $B_1$ and $B_2$ are finite Blaschke products, then
\[
\big(B_1^m+B_2^m,\,(B_1B_2)^{m/p}\big)
\]
is a $\mathbf{\Theta}_2$-inner function. Furthermore, choosing
\[
B_1(z)=B_2(z)=z,\qquad m=p=1,
\]
we obtain the classical $\Gamma$-inner function
$
(2z,z^2).$
\end{exam}
Using the parametrization of $\overline{\mathbf{\Theta}}_n$, we obtain another class of
$\mathbf{\Theta}_n$-inner functions.

\begin{exam}\label{Exam 2}
Let $
(\alpha_1,\ldots,\alpha_{n-1})\in\Gamma_{n-1}.$
Define $
\widetilde{\Theta}:\mathbb D\longrightarrow\overline{\mathbf{\Theta}}_n$
by
\begin{equation}\label{Theta tilde}
\widetilde{\Theta}(z)
=
\bigl(
\alpha_1+\overline{\alpha}_{n-1}z^p,
\alpha_2+\overline{\alpha}_{n-2}z^p,
\ldots,
\alpha_{n-2}+\overline{\alpha}_2z^p,
\alpha_{n-1}+\overline{\alpha}_1z^p,
z
\bigr).
\end{equation}
We claim that $\widetilde{\Theta}$ is a $\mathbf{\Theta}_n$-inner function.
Let
$
\widetilde{\theta}_i(z)
=
\alpha_i+\overline{\alpha}_{n-i}z^p,
1\le i\le n-1,$
and
$
\widetilde{\theta}_n(z)=z.$
Clearly, $
|\widetilde{\theta}_n(z)|\le1,
 z\in\mathbb D.$
Since
$
(\alpha_1,\ldots,\alpha_{n-1})\in\Gamma_{n-1},$
it follows from part~(4) of Theorem~\ref{Bar Theta_n Charaterization} that
$
(\widetilde{\theta}_1(z),\ldots,\widetilde{\theta}_n(z))
\in
\overline{\mathbf{\Theta}}_n,
z\in\mathbb D.$
Hence,
$
\widetilde{\Theta}(\mathbb D)
\subseteq
\overline{\mathbf{\Theta}}_n.$
Moreover,
\[
|\widetilde{\theta}_n(z)|=1,
\qquad
z\in\mathbb T.
\]
Therefore, by \cite[Theorem~2.5]{Biswas 2},
$
\widetilde{\Theta}(z)\in b\mathbf{\Theta}_n$
for almost every \(z\in\mathbb T\). Consequently,
\(\widetilde{\Theta}\) is a
\(\mathbf{\Theta}_n\)-inner function.
\end{exam}

We now present another construction of a $\mathbf{\Theta}_2$-inner function using matrix-valued inner functions. Although the resulting function is closely related to Example \ref{Exam 1}, the construction is different and is based on unitary equivalence of matrix-valued functions. To this end, define
\[
\pi:M_2(\mathbb C)\longrightarrow\mathbb C^2,
\qquad
\pi(A)=\left(\operatorname{tr}A,(\det A)^{1/p}\right).
\]

\begin{exam}\label{Exam 3}

Let $\phi$ and $\psi$ be two inner functions. Define
$
h:\mathbb D\longrightarrow\mathbb B_{2\times2}$
by
\[
h(z)=
\begin{pmatrix}
\phi^m(z)&0\\
0&\psi^m(z)
\end{pmatrix}.
\]
Since $\phi$ and $\psi$ are inner,
\[
\|h(z)\|
=\max\{|\phi^m(z)|,|\psi^m(z)|\}
<1,
\qquad z\in\mathbb D.
\]
Hence $h$ is a holomorphic map from $\mathbb D$ into the matrix ball.
Moreover,
\[
\pi(h(z))
=
\left(
\phi^m(z)+\psi^m(z),
(\phi(z)\psi(z))^{m/p}
\right),
\]
which is the $\mathbf{\Theta}_2$-inner function obtained in Example \ref{Exam 1}.
We now obtain another realization of the same function. Let
\[
U=
\frac1{\sqrt2}
\begin{pmatrix}
1&1\\
-1&1
\end{pmatrix},
\quad
V=I_2,
\]
and define
$
g_1(z)=Uh(z)V.$
Since multiplication by unitary matrices preserves the operator norm,
$g_1$ also maps $\mathbb D$ into $\mathbb B_{2\times2}$.
A direct computation gives
\[
g_1(z)
=
\frac1{\sqrt2}
\begin{pmatrix}
\phi^m(z)&\psi^m(z)\\
-\phi^m(z)&\psi^m(z)
\end{pmatrix}.
\]
Therefore,
\[
\Theta'(z)
=\pi(g_1(z))
=
\left(
\frac{\phi^m(z)+\psi^m(z)}{\sqrt2},
(\phi(z)\psi(z))^{m/p}
\right).
\]
Since
\[
\det g_1(z)
=\phi^m(z)\psi^m(z),
\]
the second coordinate is indeed $(\phi\psi)^{m/p}$.
Let
\[
\theta_1(z)=\frac{\phi^m(z)+\psi^m(z)}{\sqrt2},
\quad
\theta_2(z)=(\phi(z)\psi(z))^{m/p}.
\]
Because $\phi$ and $\psi$ are inner,
$\Theta'(z)\in\overline{\mathbf{\Theta}}_2$ for every $z\in\mathbb D$.
Furthermore, for almost every $z\in\mathbb T$,
\[
\theta_1(z)
=
\overline{\theta_1(z)}\,\theta_2(z)^p,
\quad
|\theta_2(z)|=1.
\]
Hence $\Theta'$ takes boundary values in
$b\mathbf{\Theta}_2$, and therefore
$\Theta'$ is a $\mathbf{\Theta}_2$-inner function.
\end{exam}
We now present an example of a holomorphic map
$\Theta^{\times}:\mathbb D\to\overline{\mathbf{\Theta}}_2$
which is not a $\mathbf{\Theta}_2$-inner function. This shows that
not every choice of pair  of unitary matrices $(U,V)$ in the construction of
Example \ref{Exam 3} produces a $\mathbf{\Theta}_2$-inner function.

\begin{exam}
Let $m=6$ and $p=3$. Let $\phi$, $\psi$, $h$, and $U$ be as in
Example \ref{Exam 3}, and choose
\[
V=
\begin{pmatrix}
0&1\\
1&0
\end{pmatrix}.
\]
Define
$
g_2(z)=Uh(z)V.$
A direct computation gives
\[
g_2(z)
=
\frac1{\sqrt2}
\begin{pmatrix}
\psi^6(z)&\phi^6(z)\\
\psi^6(z)&-\phi^6(z)
\end{pmatrix}.
\]
Let $
\Theta^{\times}=\pi\circ g_2.$
Then
\begin{equation}\label{Ex4}
\Theta^{\times}(z)
=
\left(
\frac{\psi^6(z)-\phi^6(z)}{\sqrt2},
(-\phi(z)\psi(z))^{6/3}
\right)
=
\left(
\frac{\psi^6(z)-\phi^6(z)}{\sqrt2},
\phi^2(z)\psi^2(z)
\right).
\end{equation}
Since $\phi$ and $\psi$ are inner functions,
$\Theta^{\times}$ maps $\mathbb D$ into
$\overline{\mathbf{\Theta}}_2$.
However, $\Theta^{\times}$ is not a
$\mathbf{\Theta}_2$-inner function.
Indeed, let
\[
\theta_1(z)
=
\frac{\psi^6(z)-\phi^6(z)}{\sqrt2},
\quad
\theta_2(z)
=
\phi^2(z)\psi^2(z).
\]
For almost every $z\in\mathbb T$, we have
\[
\begin{aligned}
\overline{\theta_1(z)}\,\theta_2(z)^3
&=
\frac1{\sqrt2}
(\overline{\psi(z)}^{\,6}-\overline{\phi(z)}^{\,6})
\phi^6(z)\psi^6(z)\\
&=
\frac1{\sqrt2}
(\phi^6(z)-\psi^6(z))\\
&=
-\theta_1(z).
\end{aligned}
\]
Hence
$
\overline{\theta_1(z)}\,\theta_2(z)^3
\neq
\theta_1(z)$
for almost every $z\in\mathbb T$. Therefore,
$\Theta^{\times}$ does not take boundary values in
$b\mathbf{\Theta}_2$, and consequently it is not a
$\mathbf{\Theta}_2$-inner function.
\end{exam}	
	\subsection{Properties of $\mathbf{\Theta}_n$-Inner Functions}

In this subsection, we establish several basic properties that every
$\mathbf{\Theta}_n$-inner function must satisfy. We begin by recalling
some standard notation for reflected polynomials.
Let $f$ be a polynomial of degree at most $n$, where $n\geq 0$. We define
the \emph{reflected polynomial} $f^{\sim n}$ by
\begin{equation}\label{f sim}
\begin{aligned}
f^{\sim n}(z)
&=
z^n\overline{f(1/\overline{z})}.
\end{aligned}
\end{equation}
We also define
\begin{equation}\label{f vee}
\begin{aligned}
f^{\vee}(z)
&=
\overline{f(\overline{z})}.
\end{aligned}
\end{equation}
It follows immediately from the definitions that
\begin{equation}\label{f sim, f vee}
\begin{aligned}
f^{\sim n}(z)
=
z^nf^{\vee}(1/z).
\end{aligned}
\end{equation}
Moreover, if $f$ has degree $k\le n$, then
\begin{equation}\label{f sim sim}
\begin{aligned}
(f^{\sim n})^{\sim n}(z)=f(z).
\end{aligned}
\end{equation}

We now derive some necessary conditions for a holomorphic map
$\Theta=(\theta_1,\ldots,\theta_n)$ to be a
$\mathbf{\Theta}_n$-inner function.

\begin{lem}\label{Lem 4}
Let $\Theta=(\theta_1,\ldots,\theta_n)$ be a
$\mathbf{\Theta}_n$-inner function. Then the following hold.

\begin{enumerate}
\item
For almost every $z\in\mathbb T$,
$
\theta_i(z)=\overline{\theta_{n-i}(z)}\,\theta_n(z)^p,
|\theta_n(z)|=1,$
for $1\le i\le n-1$.

\item
For every $z\in\mathbb D$,
$
|\theta_i(z)|
\le
k(i),$
where
$
k(i)=\binom{n-1}{i}+\binom{n-1}{n-i},
1\le i\le n-1.$

\item
The function $\theta_n$ is an inner function.
\end{enumerate}
\end{lem}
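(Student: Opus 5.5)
The plan is to read all three parts off the pointwise description of $b\mathbf{\Theta}_n$ that the paper has already used. Definition~\ref{Theta_n inner function} makes $\Theta$ bounded and holomorphic, so it has radial limits almost everywhere, and these limits lie in $b\mathbf{\Theta}_n$. Part (1) then comes pointwise; part (2) comes from Corollary~\ref{Cor 3} via \eqref{Estimate theta_i}; part (3) comes from part (1) together with boundedness.

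\textbf{Step 1 (boundedness).} Since $\Theta$ maps $\mathbb D$ into $\mathbf{\Theta}_n\subseteq\overline{\mathbf{\Theta}}_n$, each coordinate is bounded:
\[
|\theta_n(z)|<1, \qquad |\theta_i(z)|\le k(i),
\]
by \eqref{Estimate theta_i} (equivalently, via Lemma~\ref{Lem 1} and the triangle inequality on $s_i(z_1^m,\ldots,z_n^m)$). This already proves (2). By Fatou's theorem, each $\theta_i\in H^\infty$ then has radial limits $\theta_i(\zeta)$ for almost every $\zeta\in\mathbb T$.

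\textbf{Step 2 (part (1)).} Fix $\zeta$ in the full-measure set where all radial limits exist and $\Theta(\zeta)\in b\mathbf{\Theta}_n$. Applying \cite[Theorem~2.5]{Biswas 2}, as in the proof of Corollary~\ref{Characterization of bTheta_n}, gives at such $\zeta$
\[
|\theta_n(\zeta)|=1, \qquad \theta_i(\zeta)=\overline{\theta_{n-i}(\zeta)}\,\theta_n(\zeta)^p, \quad 1\le i\le n-1.
\]
Equivalently, one can pass through Proposition~\ref{Prop 3} to $b\Gamma_n$ and use \cite[Theorem~2.4]{Biswas}.

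\textbf{Step 3 (part (3)).} The function $\theta_n$ is holomorphic on $\mathbb D$ with $|\theta_n|<1$ there, and by Step 2 its radial limits have modulus $1$ almost everywhere. This is exactly the definition of an inner function.

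\textbf{Main obstacle.} The only point needing care is that the a.e.\ existence of radial limits must be secured from boundedness, which Step 1 provides, and that the boundary relations must be taken from the cited characterization of $b\mathbf{\Theta}_n$. No genuine difficulty is expected beyond that.
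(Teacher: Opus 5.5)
Your proposal is correct and follows the paper's proof. Part (1) comes from \cite[Theorem~2.5]{Biswas 2} applied to the a.e.\ boundary values, part (2) from the estimate \eqref{Estimate theta_i}, and part (3) from the unimodularity in part (1). Your explicit appeal to Fatou's theorem, to guarantee that the radial limits exist, is a small piece of care that the paper leaves implicit in Definition~\ref{Theta_n inner function}.
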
	
\begin{proof}
Since $\Theta=(\theta_1,\ldots,\theta_n)$ is a $\mathbf{\Theta}_n$-inner function,
\[
\Theta(z)\in b\mathbf{\Theta}_n
\qquad\text{for a.e. }z\in\mathbb T.
\]
Therefore, by \cite[Theorem 2.5]{Biswas 2},
$
\theta_i(z)=\overline{\theta_{n-i}(z)}\,\theta_n(z)^p,
|\theta_n(z)|=1,$
for $1\le i\le n-1$ and for almost every $z\in\mathbb T$. This proves part (1).

For part (2), note that $
\Theta(\mathbb D)\subseteq\overline{\mathbf{\Theta}}_n.$
Hence, by the estimate \eqref{Estimate theta_i},
$
|\theta_i(z)|
\le
k(i),
1\le i\le n-1,
$
where
$
k(i)=\binom{n-1}{i}+\binom{n-1}{n-i}.$

Finally, part (1) shows that
\[
|\theta_n(z)|=1
\quad\text{for a.e. }z\in\mathbb T.
\]
Since $\theta_n$ is holomorphic on $\mathbb D$, it follows that $\theta_n$ is an inner function. This completes the proof.
\end{proof}

\begin{lem}\label{Lem 5}
Let $\Theta=(\theta_1,\ldots,\theta_n)$ be a rational
$\mathbf{\Theta}_n$-inner function. Then
\[
\theta_i(z)
=
\theta_{n-i}^{\vee}(1/z)\theta_n(z)^p,
\qquad
z\in\mathbb C,
\]
for $1\le i\le n-1$.
\end{lem}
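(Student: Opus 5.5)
The plan is to start from part (1) of Lemma~\ref{Lem 4} and extend the boundary identity off $\mathbb T$ by the identity theorem for rational functions. Since $\Theta$ is rational and maps $\mathbb D$ into the bounded set $\overline{\mathbf{\Theta}}_n$, each coordinate $\theta_i$ is a rational function with no poles in $\mathbb D$. A rational function that is bounded on $\mathbb D$ cannot have a pole on $\mathbb T$ either, so each $\theta_i$ extends continuously to $\overline{\mathbb D}$. The radial limits in Definition~\ref{Theta_n inner function} therefore coincide with the actual values $\theta_i(z)$, $z\in\mathbb T$.

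By part (1) of Lemma~\ref{Lem 4}, for almost every $z\in\mathbb T$ we have
\[
\theta_i(z)=\overline{\theta_{n-i}(z)}\,\theta_n(z)^p .
\]
Both sides are continuous on $\mathbb T$, so this holds for every $z\in\mathbb T$. The key observation is that for $z\in\mathbb T$ we have $\overline z=1/z$. Hence
\[
\overline{\theta_{n-i}(z)}=\overline{\theta_{n-i}(\overline{1/z})}=\theta_{n-i}^{\vee}(1/z),
\]
using the definition \eqref{f vee} of $f^{\vee}$, which applies equally to rational functions. So on $\mathbb T$ we get $\theta_i(z)=\theta_{n-i}^{\vee}(1/z)\,\theta_n(z)^p$.

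Both $\theta_i(z)$ and $z\mapsto \theta_{n-i}^{\vee}(1/z)\,\theta_n(z)^p$ are rational functions of $z$, since $\theta_{n-i}^{\vee}$ is rational (its coefficients are conjugated). Two rational functions agreeing on the infinite set $\mathbb T$ are identical: their difference is a rational function with infinitely many zeros, so its numerator vanishes identically. This gives
\[
\theta_i(z)=\theta_{n-i}^{\vee}(1/z)\,\theta_n(z)^p
\]
as an identity of rational functions on $\mathbb C$, understood away from the finitely many poles, or after cancellation.

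The only real point of care is the first step, which justifies that radial limits equal pointwise values on all of $\mathbb T$. This rests on a rational function bounded on $\mathbb D$ having no poles on $\overline{\mathbb D}$, which holds because near a pole the modulus tends to infinity from inside $\mathbb D$. The boundedness of $\theta_{n-i}$ comes from Lemma~\ref{Lem 4}(2) and $|\theta_n|\le 1$. After that, the remaining steps are formal.
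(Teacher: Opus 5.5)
Your proof is correct and follows essentially the same route as the paper: apply Lemma~\ref{Lem 4}(1) on $\mathbb T$, rewrite $\overline{\theta_{n-i}(z)}$ as $\theta_{n-i}^{\vee}(1/z)$ using $\overline z=1/z$, and conclude with the identity theorem for rational functions. Your check that a bounded rational function has no poles on $\overline{\mathbb D}$, so that radial limits are genuine values on $\mathbb T$, makes explicit a point the paper leaves implicit; the upgrade from almost every $z$ to every $z\in\mathbb T$ is harmless but unnecessary, since a full-measure subset of $\mathbb T$ is already infinite.
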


\begin{proof}
By Lemma \ref{Lem 4},
\[
\theta_i(z)
=
\overline{\theta_{n-i}(z)}\,\theta_n(z)^p
=
\theta_{n-i}^{\vee}(\overline z)\,\theta_n(z)^p
=
\theta_{n-i}^{\vee}(1/z)\,\theta_n(z)^p
\]
for almost every $z\in\mathbb T$, where the last equality follows from the identity
$\overline z=1/z$ on $\mathbb T$.
Since $\Theta$ is rational, both
$
\theta_i(z)~\text{and}~
\theta_{n-i}^{\vee}(1/z)\theta_n(z)^p$
are rational functions. As they agree on a subset of $\mathbb T$ of positive measure, they agree on a set having an accumulation point. Hence, by the identity theorem, they coincide identically. Therefore,
\[
\theta_i(z)
=
\theta_{n-i}^{\vee}(1/z)\theta_n(z)^p
\]
for all $z\in\mathbb C$. This completes the proof.
\end{proof}

	\begin{prop}\label{Prop 6}
		Let $\Theta = (\theta_1, \dots, \theta_n)$ be a rational $\mathbf{\Theta}_n$-inner function.
		\begin{enumerate}
			\item If $a \in \mathbb{C} \cup \{\infty\}$ is a pole of $\theta_n$, of multiplicity $k \ge 0$ and $1/\overline{a}$ is a zero of $\theta_{n-i}$ of multiplicity $l \ge 0$, then $a$ is a pole of $\theta_i$ of multiplicity at least $pk - l$.
			
			\item If $a \in \mathbb{C} \cup \{\infty\}$ is a pole of $\theta_i$, of multiplicity $k \ge 0$ then $a$ is a pole of $\theta^p_n$ multiplicity at least $k$.
		\end{enumerate}
	\end{prop}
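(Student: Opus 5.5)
The plan is to read off both parts from the functional identity of Lemma~\ref{Lem 5},
\[
\theta_i(z)=\theta_{n-i}^{\vee}(1/z)\,\theta_n(z)^p,\qquad 1\le i\le n-1,
\]
valid as an identity of rational functions on $\mathbb C\cup\{\infty\}$. We compare orders of zeros and poles at a point $a$ on both sides. A useful preliminary observation is that $\theta_{n-i}^{\vee}(1/z)=\overline{\theta_{n-i}(1/\overline z)}$. Hence the order of $z\mapsto\theta_{n-i}^{\vee}(1/z)$ at $a$ equals the order of $\theta_{n-i}$ at $1/\overline a$, with the usual conventions $1/\overline 0=\infty$ and $1/\overline\infty=0$. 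Indeed, conjugation and the anti-holomorphic inversion preserve multiplicities: write $\theta_{n-i}(w)=(w-b)^l g(w)$ with $b=1/\overline a$, and compute $(1/z-\overline b)^l\,\overline{g(1/\overline z)}$, which vanishes to order exactly $l$ at $z=a$. We also use that the order of $\theta_n^p$ at $a$ is $p$ times the order of $\theta_n$.

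For part (1), suppose $a$ is a pole of $\theta_n$ of multiplicity $k$ and $1/\overline a$ is a zero of $\theta_{n-i}$ of multiplicity $l$. By the observation, $\theta_{n-i}^{\vee}(1/z)$ has a zero of order $l$ at $a$ (more precisely, order at least $l$ if $l$ is only a lower bound; the statement reads it as exact). The factor $\theta_n^p$ has a pole of order $pk$ there. Adding orders gives $\operatorname{ord}_a\theta_i=l-pk$, so $a$ is a pole of $\theta_i$ of multiplicity $pk-l$, which is at least $pk-l$. When $pk-l\le 0$ the claim is vacuous. If $1/\overline a$ is itself a pole of $\theta_{n-i}$, this corresponds to $l=0$ in the bookkeeping, and the pole order only increases, so the inequality persists. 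The cases $a=0$ and $a=\infty$ are handled by the same order count, using local coordinate $1/z$ at $\infty$.

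For part (2), let $a$ be a pole of $\theta_i$ of order $k$. From the identity, $\operatorname{ord}_a\theta_i=\operatorname{ord}_{1/\overline a}\theta_{n-i}+p\operatorname{ord}_a\theta_n$. We need $\theta_{n-i}$ to have no pole at $1/\overline a$, or at least that its contribution cannot create the pole. If $|a|\le 1$ then $|1/\overline a|\ge 1$. If $|a|>1$, including $a=\infty$, then $1/\overline a\in\mathbb D$, and $\theta_{n-i}$ is holomorphic on $\mathbb D$ and bounded by $k(i)$ there by Lemma~\ref{Lem 4}(2), so $\operatorname{ord}_{1/\overline a}\theta_{n-i}\ge 0$. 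This yields $-k\ge p\operatorname{ord}_a\theta_n$, i.e.\ $\theta_n^p$ has a pole of order at least $k$ at $a$.

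The remaining case $|a|\le1$ is the main obstacle. Poles of $\theta_i$ cannot lie in $\mathbb D$ by holomorphy, and poles on $\mathbb T$ are excluded by the boundedness in Lemma~\ref{Lem 4}(2), since a rational function bounded on $\mathbb D$ has no pole on $\overline{\mathbb D}$. So this case is vacuous, and the statement holds trivially whenever $a$ is not actually a pole. I expect the most care is needed in justifying the exclusion of poles on $\mathbb T$ and in treating $a=\infty$ consistently. For the latter I would use the reflected-polynomial description, writing $\theta_n$ as a finite Blaschke product since it is a rational inner function by Lemma~\ref{Lem 4}(3).
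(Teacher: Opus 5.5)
Your proposal is correct and follows the paper's own proof. Both compare orders at $a$ (and at $\infty$ via the coordinate $1/z$) in the identity $\theta_i(z)=\theta_{n-i}^{\vee}(1/z)\,\theta_n(z)^p$ from Lemma~\ref{Lem 5}, using that the order of $\theta_{n-i}^{\vee}(1/z)$ at $a$ equals the order of $\theta_{n-i}$ at $1/\overline a$. Your explicit exclusion of poles of $\theta_i$ in $\overline{\mathbb D}$, via the bound in Lemma~\ref{Lem 4}(2), supplies a justification that the paper omits when it simply asserts $|a|>1$ in part (2).
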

	\begin{proof}
We consider separately the cases $a\in\mathbb C$ and $a=\infty$.

\medskip

\noindent
\textbf{(1) Case $a\in\mathbb C$:}
By Lemma \ref{Lem 5},
\[
\theta_i(z)
=
\theta_{n-i}^{\vee}(1/z)\theta_n(z)^p.
\]
Since $\theta_n$ is an inner function, it is holomorphic on
$\overline{\mathbb D}$. Hence every pole of $\theta_n$ lies outside
$\overline{\mathbb D}$, so $|a|>1$. Consequently,
$|1/a|<1$, and therefore $\theta_{n-i}^{\vee}$ is holomorphic at $1/a$.
If $1/\overline a$ is a zero of $\theta_{n-i}$ of multiplicity $l$, then
$\theta_{n-i}^{\vee}(1/z)$ has a zero at $a$ of multiplicity $l$.
Since $a$ is a pole of $\theta_n$ of multiplicity $k$,
$\theta_n^p$ has a pole at $a$ of multiplicity $pk$.
Hence,
\[
\theta_i
=
\theta_{n-i}^{\vee}(1/z)\theta_n^p
\]
has a pole at $a$ of multiplicity at least $pk-l$.

\medskip

\noindent
\textbf{Case $a=\infty$:}
If $\theta_n$ has a pole at $\infty$ of multiplicity $k$, then
$\theta_n(1/z)^p$ has a pole at $0$ of multiplicity $pk$.
Similarly, if $0$ is a zero of $\theta_{n-i}$ of multiplicity $l$, then
$\theta_{n-i}^{\vee}(z)$ has a zero of multiplicity $l$ at $0$.
Using
\[
\theta_i(1/z)
=
\theta_{n-i}^{\vee}(z)\theta_n(1/z)^p,
\]
we conclude that $\theta_i(1/z)$ has a pole at $0$ of multiplicity at least
$pk-l$. Equivalently, $\theta_i$ has a pole at $\infty$ of multiplicity at least
$pk-l$.

\medskip

\noindent
\textbf{(2) Case $a\in\mathbb C$:}
Suppose $a$ is a pole of $\theta_i$ of multiplicity $k$.
Since $|a|>1$, the function $\theta_{n-i}^{\vee}(1/z)$ is holomorphic at $a$.
Again using
\[
\theta_i(z)
=
\theta_{n-i}^{\vee}(1/z)\theta_n(z)^p,
\]
it follows immediately that $\theta_n^p$ has a pole at $a$ of multiplicity at least
$k$.

\medskip

\noindent
\textbf{Case $a=\infty$:}
If $\theta_i$ has a pole at $\infty$ of multiplicity $k$, then
$\theta_i(1/z)$ has a pole at $0$ of multiplicity $k$.
Since $\theta_{n-i}^{\vee}$ is holomorphic at $0$,
the identity
\[
\theta_i(1/z)
=
\theta_{n-i}^{\vee}(z)\theta_n(1/z)^p
\]
implies that $\theta_n(1/z)^p$ has a pole at $0$ of multiplicity at least $k$.
Hence $\theta_n^p$ has a pole at $\infty$ of multiplicity at least $k$.
This completes the proof.
\end{proof}
	
We now establish the connections between $\mathbf{\Theta}_n$-inner
functions and the corresponding notions of $\Gamma_n$-inner,
$\mathbb{E}$-inner, and $\mathbf{\Theta}_{n+1}$-inner functions.

\begin{prop}\label{Prop 7}
Let $\Theta=(\theta_1,\ldots,\theta_n)$ be a
$\mathbf{\Theta}_n$-inner function. Then
$(\theta_1,\ldots,\theta_{n-1},\theta_n^p)$ is a
$\Gamma_n$-inner function.
\end{prop}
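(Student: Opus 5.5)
The proof checks the two defining requirements of a $\Gamma_n$-inner function (in the sense of \cite{costara}) for the map
\[
\Psi:=(\theta_1,\ldots,\theta_{n-1},\theta_n^p)=\pi_p\circ\Theta ,
\]
where $\pi_p$ is the map of Remark~\ref{Rem 1}. The two requirements are that $\Psi$ be holomorphic from $\mathbb D$ into $\Gamma_n$, and that its radial limits lie in $b\Gamma_n$ almost everywhere on $\mathbb T$. Both follow by pushing the corresponding properties of $\Theta$ through $\pi_p$, using Lemma~\ref{Lem 1 Bar} (and Lemma~\ref{Lem 1}) and Proposition~\ref{Prop 3}.

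\emph{Holomorphy and range.} $\Psi$ is holomorphic because $\pi_p$ is a polynomial map and each $\theta_i$ is holomorphic. By Definition~\ref{Theta_n inner function}, $\Theta(z)\in\mathbf{\Theta}_n\subseteq\overline{\mathbf{\Theta}}_n$ for every $z\in\mathbb D$. Lemma~\ref{Lem 1} then gives $\Psi(z)\in G_n$, and Lemma~\ref{Lem 1 Bar} gives $\Psi(z)\in\Gamma_n$, for every $z\in\mathbb D$. If the convention in \cite{costara} only asks for a map into $\Gamma_n$, the second conclusion is already enough.

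\emph{Boundary values.} Let $E\subseteq\mathbb T$ be the full-measure set on which $\lim_{r\to1^-}\Theta(r\zeta)$ exists and lies in $b\mathbf{\Theta}_n$. For $\zeta\in E$, continuity of $\pi_p$ gives
\[
\lim_{r\to1^-}\Psi(r\zeta)=\pi_p\Bigl(\lim_{r\to1^-}\Theta(r\zeta)\Bigr),
\]
so the radial limit of $\Psi$ exists at every point of $E$. Since $\lim_{r\to 1^-}\Theta(r\zeta)\in b\mathbf{\Theta}_n$, the implication $(1)\Rightarrow(2)$ of Proposition~\ref{Prop 3} shows that $\pi_p$ of this limit lies in $b\Gamma_n$. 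Hence the radial limits of $\Psi$ lie in $b\Gamma_n$ for almost every $\zeta\in\mathbb T$, and $\Psi$ is $\Gamma_n$-inner.

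The argument is essentially formal, and I expect no real obstacle. The only point needing care is matching the exact definition of $\Gamma_n$-inner used in \cite{costara}. That definition asks for existence of radial limits and membership in $b\Gamma_n$ almost everywhere, and the continuity of $\pi_p$ transfers both from $\Theta$ to $\Psi$.
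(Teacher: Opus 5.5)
Your proposal is correct and is essentially the paper's proof: Lemma~\ref{Lem 1 Bar} gives $\Psi(\mathbb D)\subseteq\Gamma_n$, and the implication $(1)\Rightarrow(2)$ of Proposition~\ref{Prop 3} puts the almost-everywhere boundary values into $b\Gamma_n$. You also note that continuity of $\pi_p$ carries the existence of radial limits from $\Theta$ to $\Psi$, a detail the paper leaves implicit.
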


\begin{proof}
Since $\Theta$ is a $\mathbf{\Theta}_n$-inner function,
$
\Theta(\mathbb D)\subseteq \overline{\mathbf{\Theta}}_n
$
and
$
\Theta(z)\in b\mathbf{\Theta}_n~\text{for a.e. }z\in\mathbb T.$
By Proposition \ref{Prop 3},
\[
(\theta_1(z),\ldots,\theta_{n-1}(z),\theta_n^p(z))
\in b\Gamma_n
\quad\text{for a.e. } z\in\mathbb T.
\]
Moreover, Lemma \ref{Lem 1 Bar} implies that
\[
(\theta_1(z),\ldots,\theta_{n-1}(z),\theta_n^p(z))
\in \Gamma_n
\quad\text{for all } z\in\mathbb D.
\]
Hence $(\theta_1,\ldots,\theta_{n-1},\theta_n^p)$ is a
$\Gamma_n$-inner function.
\end{proof}

\begin{prop}\label{Prop 8}
Let $\Theta=(\theta_1,\ldots,\theta_n)$ be a
$\mathbf{\Theta}_n$-inner function. Then, for every
$\alpha\in\mathbb T$,
\[
(\alpha^m+\theta_1,\,
\alpha^m\theta_1+\theta_2,\,
\ldots,\,
\alpha^m\theta_{n-1}+\theta_n^p,\,
\alpha^{m/p}\theta_n)
\]
is a $\mathbf{\Theta}_{n+1}$-inner function.
\end{prop}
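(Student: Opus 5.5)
The proof will follow the pattern of Propositions \ref{Prop 3} and \ref{Prop 4}, applied pointwise. We need two facts about the new map: its values on $\mathbb D$ lie in $\overline{\mathbf{\Theta}}_{n+1}$, and its radial limits lie in $b\mathbf{\Theta}_{n+1}$ almost everywhere on $\mathbb T$. Let $\Theta=(\theta_1,\ldots,\theta_n)$ be $\mathbf{\Theta}_n$-inner, fix $\alpha\in\mathbb T$, and define
\[
\Psi_\alpha(z)=\bigl(\alpha^m+\theta_1(z),\,\alpha^m\theta_1(z)+\theta_2(z),\,\ldots,\,\alpha^m\theta_{n-1}(z)+\theta_n(z)^p,\,\alpha^{m/p}\theta_n(z)\bigr).
\]

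\textbf{Holomorphy and range.} Every coordinate of $\Psi_\alpha$ is a polynomial expression in the holomorphic functions $\theta_1,\ldots,\theta_n$, with constant coefficients. Hence $\Psi_\alpha$ is holomorphic on $\mathbb D$. For each $z\in\mathbb D$ we have $\Theta(z)\in\overline{\mathbf{\Theta}}_n$, and $\alpha\in\mathbb T\subseteq\overline{\mathbb D}$. Lemma~\ref{Lem 2 Bar} then gives $\Psi_\alpha(z)\in\overline{\mathbf{\Theta}}_{n+1}$.

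\textbf{Boundary behaviour.} By Definition~\ref{Theta_n inner function}, there is a set $E\subseteq\mathbb T$ of full measure such that for every $\zeta\in E$ the radial limit $\Theta(\zeta):=\lim_{r\to1^-}\Theta(r\zeta)$ exists and lies in $b\mathbf{\Theta}_n$. Since $\Psi_\alpha$ is a fixed polynomial map applied to $\Theta$, its radial limit exists at each $\zeta\in E$ and equals the same polynomial map applied to $\Theta(\zeta)$. We then apply Proposition~\ref{Prop 4} to the point $\Theta(\zeta)\in b\mathbf{\Theta}_n$ with the given $\alpha\in\mathbb T$. This yields $\lim_{r\to1^-}\Psi_\alpha(r\zeta)\in b\mathbf{\Theta}_{n+1}$ for every $\zeta\in E$, hence almost everywhere. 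Together with the previous step, this shows that $\Psi_\alpha$ is a $\mathbf{\Theta}_{n+1}$-inner function.

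The main point needing care is the mismatch between the definition, which asks for maps into $\mathbf{\Theta}_n$, and the examples in the paper, which use $\overline{\mathbf{\Theta}}_n$. If the definition is read strictly as requiring values in the open domain, we also need $\Psi_\alpha(\mathbb D)\subseteq\mathbf{\Theta}_{n+1}$. Lemma~\ref{Lem 2} does not give this, because it needs $\alpha\in\mathbb D$, and here $|\alpha|=1$. I would therefore adopt the convention already used in Examples~\ref{Exam 1} and~\ref{Exam 2}, namely maps into $\overline{\mathbf{\Theta}}_{n+1}$, and make that explicit in the proof. Under this convention the only remaining check is that passing to radial limits commutes with the polynomial operations, which is immediate.
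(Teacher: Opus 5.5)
Your proof is correct and matches the paper's argument: Lemma~\ref{Lem 2 Bar} gives the range in $\overline{\mathbf{\Theta}}_{n+1}$, and Proposition~\ref{Prop 4} applied pointwise to the a.e.\ boundary values gives membership in $b\mathbf{\Theta}_{n+1}$; the paper's proof likewise works under the convention $\Theta(\mathbb D)\subseteq\overline{\mathbf{\Theta}}_n$. Your caveat about the definition is well founded: since $|\alpha|=1$, each $\Psi_\alpha(z)$ is $\theta(z_1,\ldots,z_n,\alpha)$, whose associated polynomial has the unimodular root $\alpha^m$, so it never lies in the open domain $\mathbf{\Theta}_{n+1}$ and the closure convention is genuinely needed for the statement to hold.
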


\begin{proof}
Since $\Theta$ is a $\mathbf{\Theta}_n$-inner function,
\[
\Theta(\mathbb D)\subseteq \overline{\mathbf{\Theta}}_n
\quad\text{and}\quad
\Theta(z)\in b\mathbf{\Theta}_n
\text{ for a.e. }z\in\mathbb T.
\]
By Lemma \ref{Lem 2 Bar},
\[
(\alpha^m+\theta_1(z),\,
\alpha^m\theta_1(z)+\theta_2(z),\,
\ldots,\,
\alpha^m\theta_{n-1}(z)+\theta_n^p(z),\,
\alpha^{m/p}\theta_n(z))
\in \overline{\mathbf{\Theta}}_{n+1}
\]
for every $\alpha\in\mathbb T$ and every $z\in\mathbb D$.
Furthermore, Proposition \ref{Prop 4} yields
\[
(\alpha^m+\theta_1(z),\,
\alpha^m\theta_1(z)+\theta_2(z),\,
\ldots,\,
\alpha^m\theta_{n-1}(z)+\theta_n^p(z),\,
\alpha^{m/p}\theta_n(z))
\in b\mathbf{\Theta}_{n+1}
\]
for a.e. $z\in\mathbb T$.
Hence the above $(n+1)$-tuple is a
$\mathbf{\Theta}_{n+1}$-inner function.
\end{proof}
The next result relates $\mathbf{\Theta}_n$-inner functions to
$\mathbb{E}$-inner functions.
\begin{prop}\label{Prop 9}
Let $
k(i)=\binom{n-1}{i}+\binom{n-1}{n-i},$
and let
$\Theta=(\theta_1,\ldots,\theta_n):
\mathbb D\to\mathbb C^n$
be a $\mathbf{\Theta}_n$-inner function.
Then, for each $1\le i\le n-1$,
$
\left(
\frac{\theta_i}{k(i)},
\frac{\theta_{n-i}}{k(i)},
\theta_n^p
\right)$
is an $\mathbb E$-inner function.
\end{prop}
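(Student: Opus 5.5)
The plan is to follow the pattern of Propositions~\ref{Prop 7} and~\ref{Prop 8}. Fix $1\le i\le n-1$ and set
\[
F_i=\left(\frac{\theta_i}{k(i)},\frac{\theta_{n-i}}{k(i)},\theta_n^p\right).
\]
An $\mathbb E$-inner function, in the sense of \cite{Alsalhi}, is a holomorphic map $\mathbb D\to\overline{\mathbb E}$ whose radial limits lie in $b\mathbb E$ almost everywhere on $\mathbb T$. So we need two facts: interior containment and boundary behaviour. Holomorphy of $F_i$ is immediate, since each coordinate is a scalar multiple or a power of a holomorphic coordinate of $\Theta$.

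For interior containment, note that $\Theta(z)\in\overline{\mathbf{\Theta}}_n$ for every $z\in\mathbb D$. Lemma~\ref{Lem 3} then gives $F_i(z)\in\overline{\mathbb E}$ for all $z\in\mathbb D$.

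For the boundary, observe that $\Theta$ is bounded: its coordinates satisfy the bounds of Lemma~\ref{Lem 4}(2), and $|\theta_n|\le1$. Hence its radial limits exist almost everywhere on $\mathbb T$, and so do those of $F_i$. The radial limit of $F_i$ is obtained by applying the continuous map $(x_1,\dots,x_n)\mapsto(x_i/k(i),x_{n-i}/k(i),x_n^p)$ to the radial limit of $\Theta$.

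The definition of a $\mathbf{\Theta}_n$-inner function gives $\Theta(z)\in b\mathbf{\Theta}_n$ for almost every $z\in\mathbb T$. The forward direction of Proposition~\ref{Prop 5} then yields $F_i(z)\in b\mathbb E$ at every such $z$. This is a single use of that direction and needs no extra hypothesis, because the condition $(\gamma_1\theta_1,\dots)\in\Gamma_{n-1}$ appears only in the converse.

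The one point that needs care is whether Proposition~\ref{Prop 5} really delivers $b\mathbb E$ membership for each fixed $i$ separately. If one prefers not to rely on its proof, I would verify this directly using the characterization \cite[Theorem~7.1]{Abouhajar}: a point $(x_1,x_2,x_3)$ lies in $b\mathbb E$ exactly when $x_1=\overline{x}_2x_3$, $|x_3|=1$ and $|x_1|\le1$. Lemma~\ref{Lem 4}(1) gives $\theta_i=\overline{\theta}_{n-i}\theta_n^p$ and $|\theta_n^p|=1$ almost everywhere on $\mathbb T$. Dividing by $k(i)$ gives $x_1=\overline{x}_2x_3$. Finally, $|x_1|\le1$ follows from the estimate \eqref{Estimate theta_i}, which passes to boundary values since it holds on $\mathbb D$. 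Combining the interior and boundary statements shows that $F_i$ is $\mathbb E$-inner for each $1\le i\le n-1$.
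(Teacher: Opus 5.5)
Your proof is correct and matches the paper's own argument: interior containment in $\overline{\mathbb E}$ comes from Lemma~\ref{Lem 3}, and a.e.\ boundary membership in $b\mathbb E$ comes from the forward direction of Proposition~\ref{Prop 5}. Your backup check of the boundary step via \cite[Theorem~7.1]{Abouhajar} is also valid, since it uses Lemma~\ref{Lem 4}(1) and the bound $|\theta_i|\le k(i)$, and it lets you avoid relying on Proposition~\ref{Prop 5} at all.
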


\begin{proof}
Since $\Theta$ is a $\mathbf{\Theta}_n$-inner function,
$
\Theta(z)\in\overline{\mathbf{\Theta}}_n
\quad\text{for all }z\in\mathbb D,
$
and
$
\Theta(z)\in b\mathbf{\Theta}_n
\quad\text{for a.e. }z\in\mathbb T.$
By Lemma \ref{Lem 3},
\[
\left(
\frac{\theta_i(z)}{k(i)},
\frac{\theta_{n-i}(z)}{k(i)},
\theta_n^p(z)
\right)
\in\overline{\mathbb E}~
\text{for every }~z\in\mathbb D~\text{ and}~1\le i\le n-1.\]
Moreover, Proposition \ref{Prop 5} implies that
\[
\left(
\frac{\theta_i(z)}{k(i)},
\frac{\theta_{n-i}(z)}{k(i)},
\theta_n^p(z)
\right)
\in b\mathbb E~\text{
for a.e. }~z\in\mathbb T.\]
Therefore,
$
\left(
\frac{\theta_i}{k(i)},
\frac{\theta_{n-i}}{k(i)},
\theta_n^p
\right)$
is an $\mathbb E$-inner function for each
$1\le i\le n-1$.
\end{proof}	
	
\subsection{Description of $\mathbf{\Theta}_n$-Inner Functions}

We now derive an explicit description of rational
$\mathbf{\Theta}_n$-inner functions. We begin by recalling the well-known
description of rational inner functions on the unit disc. Let $h$ be a rational function on $\mathbb D$ whose poles lie outside
$\overline{\mathbb D}$. Then $h$ is a rational inner function of degree
$n$ (equivalently, a finite Blaschke product with $n$ factors; see
\cite[Corollary 6.10]{JAZLNY 1}) if and only if there exists a polynomial
$D$ of degree at most $n$, having no zeros in $\overline{\mathbb D}$, such
that
\begin{equation}\label{Description of inner function}
\begin{aligned}
h(z)=\frac{D^{\sim n}(z)}{D(z)}.
\end{aligned}
\end{equation}
This representation provides a complete description of rational inner
functions on $\mathbb D$. We denote the degree of $h$ by $\deg(h)$. To obtain an analogous description for rational
$\mathbf{\Theta}_n$-inner functions, we first introduce the associated
model space.

Let $H^2$ denote the Hardy space on the unit circle $\mathbb T$. Define
\begin{equation}\label{H^2 bar}
\overline{H}^2
=
\{f\in L^2:\overline{f}\in H^2\},
\end{equation}
and
\begin{equation}\label{H^2 minus}
H^2_-=
\{f\in L^2:\widehat{f}(m)=0,\ \forall\,m\ge0\}.
\end{equation}
It is well known that
$
\overline{H}^2=\id_{\mathbb D}H^2_-,$
where $\id_{\mathbb D}(z)=z$ is the identity function on $\mathbb D$.

Now let
$\Theta=(\theta_1,\ldots,\theta_n)$
be a rational $\mathbf{\Theta}_n$-inner function and suppose that
$\deg(\theta_n)=k$. Since $\theta_n$ is an inner function,
$\theta_n^p$ is also an inner function of degree $pk$.
Moreover,
\[
(\theta_1(z),\ldots,\theta_n(z))
\in b\mathbf{\Theta}_n
\quad\text{for a.e. }z\in\mathbb T,
\]
and hence each $\theta_i$ belongs to $H^\infty\subset H^2$.
By Lemma \ref{Lem 5},
\[
\theta_i(z)
=
\overline{\theta_{n-i}(z)}\,\theta_n^p(z)
\quad\text{for a.e. }z\in\mathbb T.
\]
Therefore,
$
\overline{\theta_{n-i}}\theta_n^p\in H^2,$
which implies that
$
\theta_{n-i}\overline{\theta_n^p}\in\overline{H}^2.$
Since $|\theta_n|=1$ almost everywhere on $\mathbb T$, we obtain
\[
\theta_{n-i}\in\theta_n^p\overline{H}^2
=(\id_{\mathbb D}\theta_n^p)H^2_-.
\]
Consequently,
\[
\theta_{n-i}
\in
H^2
\cap
(\id_{\mathbb D}\theta_n^p)H^2_-.
\]
Since $\id_{\mathbb D}\theta_n^p$ is an inner function of degree
$pk+1$, its associated model space is
\begin{equation}\label{Model Space}
\begin{aligned}
H^2\cap(\id_{\mathbb D}\theta_n^p)H^2_-
=
H^2\ominus(\id_{\mathbb D}\theta_n^p)H^2
=: \mathcal K_{\Theta}.
\end{aligned}
\end{equation}
(see, for example, \cite[Definition 3.1.1]{Nikolski}.)
Since $\deg(\id_{\mathbb D}\theta_n^p)=pk+1$, it follows that
\[
\dim\mathcal K_{\Theta}=pk+1.
\]
Finally, for a rational $\mathbf{\Theta}_n$-inner function
$\Theta=(\theta_1,\ldots,\theta_n)$ with
$\deg(\theta_n)=k$, we define
\[
\deg(\Theta):=p\,\deg(\theta_n)=pk.
\]
The remainder of this section is devoted to deriving a complete
description of rational $\mathbf{\Theta}_n$-inner functions, analogous
to the representation \eqref{Description of inner function} for rational
inner functions on $\mathbb D$. This characterization constitutes one
of the main results of the paper.	

\begin{thm}\label{Description Theta_n Inner Function}
Let $
\Theta=(\theta_1,\ldots,\theta_n):\mathbb D\to\overline{\mathbf{\Theta}}_n$
be a rational $\mathbf{\Theta}_n$-inner function of degree $pk$.
Then there exist polynomials $E_1,\ldots,E_{n-1}$ and $D$ satisfying the following:

\begin{enumerate}
\item $\deg(E_i)\le pk$ for $1\le i\le n-1$, and $\deg(D)\le k$;

\item $D(z)\neq0$ for every $z\in\overline{\mathbb D}$;

\item
$
\theta_i=\frac{E_i}{D^p},
1\le i\le n-1,$
and $
\theta_n=\frac{D^{\sim k}}{D}$
on $\overline{\mathbb D}$;

\item $
|E_i(z)|\le k(i)\,|D(z)|^p,
z\in\overline{\mathbb D},\;
1\le i\le n-1;$

\item $
E_i=E_{n-i}^{\sim pk},
1\le i\le n-1.$
\end{enumerate}
Moreover, another collection of polynomials
$E_1',\ldots,E_{n-1}'$ and $D'$
satisfies {\rm(1)--(5)} if and only if there exists
$t\in\mathbb R\setminus\{0\}$ such that
\[
D'=tD
\quad\text{and}\quad
E_i'=t^pE_i,
\quad
1\le i\le n-1.
\]
\end{thm}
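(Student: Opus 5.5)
We will follow the standard template used for $\Gamma_n$-inner and tetra-inner functions, combining the description \eqref{Description of inner function} of rational inner functions with Lemmas \ref{Lem 4} and \ref{Lem 5} and Proposition \ref{Prop 6}.

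\emph{Step 1: choose $D$.} By Lemma \ref{Lem 4}(3), $\theta_n$ is a rational inner function of degree $k$. By \eqref{Description of inner function} there is a polynomial $D$ with $\deg D\le k$, no zeros in $\overline{\mathbb D}$, and $\theta_n=D^{\sim k}/D$. This gives (2), the $\theta_n$ part of (3), and $\deg D\le k$. It is convenient to normalize $D$ so that it has no common zeros with $D^{\sim k}$, which is automatic if $\theta_n$ has exact degree $k$. The zeros of $D$ are then precisely the poles of $\theta_n$, counted with multiplicity; if $\deg D<k$, the deficit $k-\deg D$ is the order of the pole of $\theta_n$ at $\infty$.

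\emph{Step 2: define $E_i:=D^p\theta_i$ and show it is a polynomial of degree at most $pk$.} By Proposition \ref{Prop 6}(2), every pole of $\theta_i$ in $\mathbb C\cup\{\infty\}$ is a pole of $\theta_n^p$ of at least the same multiplicity. The finite poles of $\theta_n^p$ are the zeros of $D^p$, so $D^p\theta_i$ has no finite poles and is entire rational, that is, a polynomial.

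For the degree, we look at $\infty$. Since $\theta_n^p=(D^{\sim k})^p/D^p$, the order of $\theta_i$ at $\infty$ is bounded by the pole order of $\theta_n^p$ there, which is $p(k-\deg D)$. Hence $\deg E_i\le p\deg D+p(k-\deg D)=pk$. This proves (1) and the remaining part of (3).

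\emph{Step 3: prove (4) and (5).} Part (4) is immediate from Lemma \ref{Lem 4}(2) (that is, \eqref{Estimate theta_i}) on $\mathbb D$, extended to $\overline{\mathbb D}$ by continuity since $D\ne0$ there.

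For (5), we start from Lemma \ref{Lem 5}: $\theta_i(z)=\theta_{n-i}^\vee(1/z)\theta_n(z)^p$. Substituting the formulas from Step 2 gives
\[
\frac{E_i}{D^p}=\frac{E_{n-i}^\vee(1/z)}{D^\vee(1/z)^p}\cdot\frac{(D^{\sim k})^p}{D^p}.
\]
Using $D^{\sim k}(z)=z^kD^\vee(1/z)$ from \eqref{f sim, f vee}, the right side becomes $z^{pk}E_{n-i}^\vee(1/z)/D^p=E_{n-i}^{\sim pk}/D^p$. Cancelling $D^p$ yields $E_i=E_{n-i}^{\sim pk}$.

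\emph{Step 4: uniqueness.} If $D',E_i'$ satisfy (1)--(5), then $D'^{\sim k}/D'=D^{\sim k}/D$, and the uniqueness part of the inner-function description (as in \cite{JAZLNY 1}) gives $D'=tD$ for some nonzero scalar $t$. We then check that $t$ must be real: $(tD)^{\sim k}=\bar t D^{\sim k}$, so $\bar t/t=1$. It follows that $E_i'=D'^p\theta_i=t^pE_i$.

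Conversely, for real $t\neq0$ the rescaled data $tD$, $t^pE_i$ satisfy (1)--(3), and (5) holds because $(t^pE)^{\sim pk}=t^pE^{\sim pk}$. However, (4) only survives for $|t|\ge1$ as literally stated, since the right side scales by $|t|^p$ and the left by $|t|^p$ as well. Checking this scaling carefully, (4) is in fact homogeneous in $t$ and is preserved.

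\emph{Main obstacle.} The delicate point is the pole-order bookkeeping at $\infty$ in Step 2 when $\deg D<k$ or when $D$ and $D^{\sim k}$ might share factors. We will handle this by fixing $D$ as the minimal denominator of $\theta_n$ and applying Proposition \ref{Prop 6}(2) separately at finite poles and at $\infty$. A second point to watch is whether the uniqueness step for $D$ genuinely forces $t$ to be real rather than unimodular; the computation $\bar t/t=1$ above settles this.
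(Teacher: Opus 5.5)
Your proposal is correct and takes essentially the same route as the paper. You write $\theta_n=D^{\sim k}/D$, use Proposition~\ref{Prop 6}(2) to make $E_i=D^p\theta_i$ a polynomial of degree at most $pk$, derive (5) from Lemma~\ref{Lem 5} via $D^{\sim k}(z)=z^kD^{\vee}(1/z)$, and obtain uniqueness from $D'=tD$ with $\bar t=t$; your explicit count at $\infty$, $\deg E_i\le p\deg D+p(k-\deg D)$, is a bit more careful than the paper's one-line claim.

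The only thing to fix is the self-contradictory remark that (4) survives only for $|t|\ge1$. Both sides of (4) scale by $|t|^p$, so it holds for every real $t\neq0$, as you go on to say.
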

\begin{proof}
Let $\Theta=(\theta_1,\dots,\theta_n)$ be a rational
$\mathbf{\Theta}_n$-inner function of degree $pk$. By
Lemma~\ref{Lem 4}, $\theta_n$ is an inner function of degree $k$.
Hence, by the representation \eqref{Description of inner function},
there exists a polynomial $D$ of degree at most $k$, with
$D(z)\neq0$ for every $z\in\overline{\mathbb D}$, such that $
\theta_n=\frac{D^{\sim k}}{D}.$
Consequently,
\begin{equation}\label{theta^p_n}
\theta_n^p=\frac{(D^{\sim k})^p}{D^p}.
\end{equation}

Since $\deg(\theta_n^p)=pk$, Proposition~\ref{Prop 6}(2) implies that
every pole of $\theta_i$ is also a pole of $\theta_n^p$, with
multiplicity no larger than that of $\theta_n^p$. Therefore, for each
$1\le i\le n-1$, there exists a polynomial $E_i$ of degree at most
$pk$ such that
$
\theta_i=\frac{E_i}{D^p}.$
Moreover, $\Theta(\mathbb D)\subseteq\overline{\mathbf{\Theta}}_n$,
and hence estimate \eqref{Estimate theta_i} yields $
|\theta_i(z)|\le k(i), z\in\overline{\mathbb D},$
which is equivalent to
\[
|E_i(z)|\le k(i)|D(z)|^p,
\quad
z\in\overline{\mathbb D}.
\]
This proves {\rm(1)--(4)}.

To prove {\rm(5)}, Lemma~\ref{Lem 5} gives
\begin{equation}\label{Des 1}
\theta_i(z)=\theta_{n-i}^{\vee}(1/z)\theta_n^p(z),
\quad z\in\mathbb T.
\end{equation}
Using \eqref{theta^p_n}, we obtain
\[
\frac{E_i(z)}{D(z)^p}
=
\frac{\overline{E_{n-i}(1/\bar z)}}{\overline{D(1/\bar z)^p}}
\cdot
\frac{z^{pk}\overline{D(1/\bar z)^p}}{D(z)^p},
\]
which simplifies to
\begin{equation}\label{Des 2}
E_i(z)
=
z^{pk}\overline{E_{n-i}(1/\bar z)}
=
E_{n-i}^{\sim pk}(z),
\quad z\in\mathbb T.
\end{equation}
Since both sides are polynomials, the identity theorem implies that
\[
E_i=E_{n-i}^{\sim pk},
\quad
1\le i\le n-1,
\]
proving {\rm(5)}.

For uniqueness, suppose another collection
$E_1',\dots,E_{n-1}',D'$ satisfies {\rm(1)--(5)}. Then
\[
\frac{E_i}{D^p}
=
\frac{E_i'}{D'^p},
\quad
\frac{D^{\sim k}}{D}
=
\frac{D'^{\,\sim k}}{D'}.
\]
Since both $D$ and $D'$ are zero-free on
$\overline{\mathbb D}$ and represent the same rational inner function,
they have the same zeros in
$\mathbb C\setminus\overline{\mathbb D}$, counted with multiplicity.
Hence there exists a constant
$t\in\mathbb C\setminus\{0\}$ such that $
D'=tD.$
Substituting into the second identity gives
\[
\frac{D^{\sim k}}{D}
=
\frac{\bar t\,D^{\sim k}}{tD},
\]
and therefore $\bar t=t$. Thus
$t\in\mathbb R\setminus\{0\}$.
Finally,
\[
\frac{E_i}{D^p}
=
\frac{E_i'}{(tD)^p},
\]
which implies
\[
E_i'=t^pE_i,
\quad
1\le i\le n-1.
\]

If $D'=tD$ and $E_i'=t^pE_i$ for some
$t\in\mathbb R\setminus\{0\}$, then it is immediate that
{\rm(1)--(5)} remain valid. This completes the proof.
\end{proof}

It is important to note that Theorem~\ref{Description Theta_n Inner Function}
provides an explicit representation of the components of a rational
$\mathbf{\Theta}_n$-inner function. However, the conditions
{\rm(1)--(5)} in Theorem~\ref{Description Theta_n Inner Function}
are necessary but, in general, not sufficient for an $n$-tuple
$\Theta=(\theta_1,\ldots,\theta_n)$ to be a rational
$\mathbf{\Theta}_n$-inner function.

Indeed, suppose there exist polynomials
$E_1,\ldots,E_{n-1}$ and $D$ satisfying
{\rm(1)--(5)} and define
\[
\theta_i=\frac{E_i}{D^p},
\quad
1\le i\le n-1,
\quad
\theta_n=\frac{D^{\sim k}}{D}.
\]
Then, by construction,
\[
|\theta_n(z)|=1,
\quad
\theta_i(z)=\overline{\theta_{n-i}(z)}\,\theta_n^p(z),
\quad
z\in\mathbb T.
\]

To conclude that $\Theta$ is a rational
$\mathbf{\Theta}_n$-inner function, it remains to verify that
$\Theta(\mathbb D)\subseteq\overline{\mathbf{\Theta}}_n$.
By Theorem~\ref{Bar Theta_n Charaterization}, this is equivalent to $
\sup_{|w|\le1}|f(w,\Theta(z))|\le1, z\in\mathbb D,$
where $f$ is the rational function defined in
\eqref{f}. Explicitly, this amounts to showing that
\begin{equation}\label{Necessary}
\begin{aligned}
\sup_{|w|\le1}
\left|
\frac{
n(-1)^n\theta_n^p(z)w^{\,n-1}
+(n-1)(-1)^{n-1}\theta_{n-1}(z)w^{\,n-2}
+\cdots-\theta_1(z)}
{
n-(n-1)\theta_1(z)w+\cdots+(-1)^{n-1}\theta_{n-1}(z)w^{\,n-1}}
\right|
\le1
\end{aligned}
\end{equation}
for every $z\in\mathbb D$. Now suppose $z\in\mathbb T$. If
\[
n-(n-1)\theta_1(z)w+\cdots
+(-1)^{n-1}\theta_{n-1}(z)w^{\,n-1}\neq0,
\quad w\in\mathbb T,
\]
then Corollary~\ref{Characterization of bTheta_n} yields
\[
\left|
\frac{
n(-1)^n\theta_n^p(z)w^{\,n-1}
+(n-1)(-1)^{n-1}\theta_{n-1}(z)w^{\,n-2}
+\cdots-\theta_1(z)}
{
n-(n-1)\theta_1(z)w+\cdots+(-1)^{n-1}\theta_{n-1}(z)w^{\,n-1}}
\right|
=1.
\]

Therefore, a necessary condition for \eqref{Necessary} is that
\begin{equation}\label{Necessary1}
n-(n-1)\theta_1(z)w+\cdots
+(-1)^{n-1}\theta_{n-1}(z)w^{\,n-1}\neq0,
\quad
w\in\mathbb D,\ z\in\mathbb T.
\end{equation}
By the characterization of $\Gamma_{n-1}$ (see
\cite[Theorem~2.5]{Biswas 2}), condition
\eqref{Necessary1} is equivalent to
\begin{equation}\label{Necessary2}
(\gamma_1\theta_1(z),\ldots,
\gamma_{n-1}\theta_{n-1}(z))
\in\Gamma_{n-1},
\quad
z\in\mathbb T,
\end{equation}
where $\gamma_i=\frac{n-i}{n}$ for
$1\le i\le n-1$.

Conversely, by the same characterization of $\Gamma_{n-1}$,
condition \eqref{Necessary2} guarantees that the denominator of
$f(w,\Theta(z))$ has no zeros in $\overline{\mathbb D}$.
Together with the identities
\[
|\theta_n(z)|=1,
\quad
\theta_i(z)=\overline{\theta_{n-i}(z)}\,\theta_n^p(z),
\]
this implies \eqref{Necessary}. We therefore obtain the following
partial converse to
Theorem~\ref{Description Theta_n Inner Function}.

\begin{thm}\label{Partial Converse}
Let $E_1,\ldots,E_{n-1}$ and $D$ be polynomials satisfying conditions {\rm(1), (2),} and {\rm(5)} of Theorem \ref{Description Theta_n Inner Function}. Define
\[
\theta_i=\frac{E_i}{D^p}, \quad 1\le i\le n-1,
\]
and
\[
\theta_n=\frac{D^{\sim k}}{D}.
\]
Suppose that $
(\gamma_1\theta_1(z),\ldots,\gamma_{n-1}\theta_{n-1}(z))
\in\Gamma_{n-1}~\text{for every } z\in\mathbb T,$
where $\gamma_i=\frac{n-i}{n}$ for $1\le i\le n-1$. Then $
\Theta=(\theta_1,\ldots,\theta_n)$
is a rational $\mathbf{\Theta}_n$-inner function of degree at most $pk$.
\end{thm}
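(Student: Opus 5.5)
The plan is to verify directly the two requirements of Definition~\ref{Theta_n inner function}: first that $\Theta(z)\in b\mathbf{\Theta}_n$ for every $z\in\mathbb T$, and then that $\Theta(\mathbb D)\subseteq\overline{\mathbf{\Theta}}_n$. The second is obtained by a maximum-principle argument through the map $\pi_p$ of Remark~\ref{Rem 1}.

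\textbf{Regularity.} By condition (2), $D$ has no zeros on $\overline{\mathbb D}$. Hence every $\theta_i=E_i/D^p$ and $\theta_n=D^{\sim k}/D$ is holomorphic on a neighbourhood of $\overline{\mathbb D}$. By \eqref{Description of inner function}, $\theta_n$ is a finite Blaschke product of degree at most $k$, since $\deg D\le k$. In particular $|\theta_n|=1$ on $\mathbb T$, and $\deg(\Theta)=p\deg(\theta_n)\le pk$.

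\textbf{Boundary identities.} For $z\in\mathbb T$ we have $D^{\sim k}(z)=z^k\overline{D(z)}$. Therefore
\[
\overline{\theta_{n-i}(z)}\,\theta_n(z)^p
=\frac{\overline{E_{n-i}(z)}}{\overline{D(z)}^{\,p}}\cdot\frac{z^{pk}\overline{D(z)}^{\,p}}{D(z)^p}
=\frac{E_{n-i}^{\sim pk}(z)}{D(z)^p}
=\frac{E_i(z)}{D(z)^p}=\theta_i(z),
\]
where the third equality uses $|z|=1$ and the fourth uses condition (5). So for every $z\in\mathbb T$ the three conditions $|\theta_n(z)|=1$, $\theta_i(z)=\overline{\theta_{n-i}(z)}\theta_n^p(z)$, and $(\gamma_1\theta_1(z),\dots,\gamma_{n-1}\theta_{n-1}(z))\in\Gamma_{n-1}$ (the last by hypothesis) all hold. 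By \cite[Theorem~2.5]{Biswas 2}, this gives $\Theta(z)\in b\mathbf{\Theta}_n$ for all $z\in\mathbb T$. Consequently $\Theta(z)\in\overline{\mathbf{\Theta}}_n$ on $\mathbb T$, and by Proposition~\ref{Prop 3} the point $G(z):=(\theta_1(z),\dots,\theta_{n-1}(z),\theta_n^p(z))$ lies in $b\Gamma_n\subseteq\Gamma_n$ there.

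\textbf{Interior (main obstacle).} The remaining task, which I expect to be the real step, is to show $\Theta(z)\in\overline{\mathbf{\Theta}}_n$ for $z\in\mathbb D$. Here the denominator of $f(\cdot,\Theta(z))$ is not a priori controlled, so the direct criterion \eqref{Necessary} is awkward to use. Instead I would pass to $G=\pi_p\circ\Theta$, which is holomorphic on a neighbourhood of $\overline{\mathbb D}$ and satisfies $G(\mathbb T)\subseteq\Gamma_n$.

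The key input is that $\Gamma_n$ is polynomially convex, since it is the image of the polydisc $\overline{\mathbb D}^n$ under the proper polynomial map $\mathbf s$. For any polynomial $q$ on $\mathbb C^n$, the maximum modulus principle applied to $q\circ G$ gives
\[
|q(G(z))|\le\sup_{\mathbb T}|q\circ G|\le\sup_{\Gamma_n}|q|,\qquad z\in\mathbb D.
\]
Thus $G(z)$ lies in the polynomial hull of $\Gamma_n$, which is $\Gamma_n$ itself. Then the implication (2)$\Rightarrow$(1) of Theorem~\ref{Bar Theta_n Charaterization} yields $\Theta(z)\in\overline{\mathbf{\Theta}}_n$ for every $z\in\mathbb D$.

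Combining the three steps, $\Theta$ is a holomorphic map into $\overline{\mathbf{\Theta}}_n$ whose boundary values lie in $b\mathbf{\Theta}_n$ on all of $\mathbb T$, so it is a rational $\mathbf{\Theta}_n$-inner function of degree at most $pk$. If one prefers to avoid citing polynomial convexity of $\Gamma_n$, a fallback is to apply the same hull argument to $\overline{\mathbf{\Theta}}_n$ directly, which is the image of $\overline{\mathbb D}^n$ under the proper polynomial map $\theta$.
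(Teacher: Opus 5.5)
Your proof is correct. Its first two steps coincide with the paper's items (i) and (ii). First, $\theta_n=D^{\sim k}/D$ is a finite Blaschke product because $D$ is zero-free on $\overline{\mathbb D}$. Second, condition (5) gives $\theta_i=\overline{\theta_{n-i}}\,\theta_n^p$ on $\mathbb T$, and \cite[Theorem~2.5]{Biswas 2} then puts $\Theta(\mathbb T)$ in $b\mathbf{\Theta}_n$.

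The two routes split at the interior step. The paper stays inside its rational-function criterion, Theorem~\ref{Bar Theta_n Charaterization}(3). It fixes $w\in\overline{\mathbb D}$, uses $|f(w,\Theta(z))|\le 1$ for $z\in\mathbb T$, and applies the maximum principle in $z$. That requires $z\mapsto n-(n-1)\theta_1(z)w+\cdots+(-1)^{n-1}\theta_{n-1}(z)w^{n-1}$ to be zero-free on $\mathbb D$. The paper reads this off the $\Gamma_{n-1}$ hypothesis, but that hypothesis is only imposed for $z\in\mathbb T$, so as written the step needs a supplementary argument. For instance, the number of zeros of this function in $\mathbb D$ is a winding integral over $\mathbb T$; it depends continuously on $w\in\overline{\mathbb D}$ and is $0$ at $w=0$, where the function is the constant $n$. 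Your route (push forward by $\pi_p$, take the polynomial hull, then use the equivalence (1)$\Leftrightarrow$(2) of Theorem~\ref{Bar Theta_n Charaterization}) never meets this denominator. It uses the $\Gamma_{n-1}$ hypothesis only on $\mathbb T$, where it is actually available. The price is an outside input: polynomial convexity of $\Gamma_n$.

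On that input, your stated reason does not work as a general principle. A proper polynomial image of a polynomially convex compact set need not be polynomially convex: $z\mapsto z^2$ maps a closed half-circle onto the whole circle. What makes it work for $\Gamma_n$ is that $\overline{\mathbb D}^n=\mathbf s^{-1}(\Gamma_n)$ is a full preimage invariant under permutations, so separating polynomials can be averaged into symmetric ones. The same holds for your fallback, since $\theta^{-1}(\overline{\mathbf{\Theta}}_n)=\overline{\mathbb D}^n$ is invariant under the corresponding finite group.

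It is cleaner either to cite the known polynomial convexity of $\Gamma_n$, or to bypass it with the same winding-number device. Fix $|t|>1$. The function $z\mapsto t^n-\theta_1(z)t^{n-1}+\cdots+(-1)^n\theta_n(z)^p$ is holomorphic near $\overline{\mathbb D}$ and zero-free on $\mathbb T$, because $G(\mathbb T)\subseteq\Gamma_n$. Its zero count in $\mathbb D$ is therefore constant on the connected set $\{|t|>1\}$, and it vanishes for $|t|$ large. Hence every root of the polynomial $P$ attached to $\Theta(z)$ lies in $\overline{\mathbb D}$, that is, $\Theta(z)\in\overline{\mathbf{\Theta}}_n$ for each $z\in\mathbb D$.
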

	
	\begin{proof}
		Suppose $\theta_i = \frac{E_i}{D^p}$ for $1 \le i \le n-1$ and $\theta_n = \frac{D^{\sim k}}{D}$ on $\overline{\mathbb{D}}$. To show that $\Theta := (\theta_1, \dots, \theta_n)$ is a rational $\mathbf{\Theta}_n$-inner function. In order to prove that we need to verify the following:
		\begin{enumerate}
			\item[(i)] $\theta_n$ is inner,
			
			\item[(ii)] $\Theta(z) \in b\mathbf{\Theta}_n$ for a.e. $z \in \mathbb{T}$,
			
			\item[(iii)] $\Theta : \mathbb{D} \to \overline{\mathbf{\Theta}}_n$; i.e., $\Theta$ maps $\mathbb{D}$ into $\overline{\mathbf{\Theta}}_n$.
		\end{enumerate}
	First, we prove (i). Since $D$ satisfies condition {\rm(2)}, the
representation $
\theta_n=\frac{D^{\sim k}}{D}$
is precisely the standard representation of a rational inner function.
Hence $\theta_n$ is inner. Moreover, if $D$ and $D^{\sim k}$ have common
zeros on $\mathbb T$, these common factors cancel. Consequently, $
\deg(\theta_n)\le k.$	

By condition {\rm(5)} of
Theorem~\ref{Description Theta_n Inner Function},
$
E_i=E_{n-i}^{\sim pk},
1\le i\le n-1.$
Hence, by \eqref{Des 1} and \eqref{Des 2},
\[
\theta_i(z)
=
\overline{\theta_{n-i}(z)}
\theta_n^p(z),
\quad
z\in\mathbb T.
\]
Since $
(\gamma_1\theta_1(z),\ldots,\gamma_{n-1}\theta_{n-1}(z))
\in\Gamma_{n-1}$
for every $z\in\mathbb T$, it follows from
\cite[Theorem~2.5]{Biswas 2} that
$
\Theta(z)\in b\mathbf{\Theta}_n$
for almost every $z\in\mathbb T$.
Thus (ii) holds.		

To prove (iii), by Theorem \ref{Bar Theta_n Charaterization}, it is
enough to show that
\begin{equation}\label{Con Des 2}
\sup_{|w|\le1}|f(w,\Theta(z))|
\le1
\quad\text{for every }z\in\overline{\mathbb D}.
\end{equation}
Since $\Theta(z)\in b\mathbf{\Theta}_n$ for almost every
$z\in\mathbb T$, Theorem
\ref{Bar Theta_n Charaterization} yields
$
|f(w,\Theta(z))|
\le1$
for every fixed $w\in\overline{\mathbb D}$ and for almost every
$z\in\mathbb T$.
Now fix $w\in\overline{\mathbb D}$. Since
$
(\gamma_1\theta_1(z),\ldots,\gamma_{n-1}\theta_{n-1}(z))
\in\Gamma_{n-1}$
for every $z\in\mathbb T$, the denominator of
$f(w,\Theta(z))$ is nonzero for every
$w\in\overline{\mathbb D}$. Hence the function
\[
z\longmapsto f(w,\Theta(z))
\]
is holomorphic on $\mathbb D$. Therefore, by the maximum modulus
principle,
\[
|f(w,\Theta(z))|
\le1
\quad
\text{for every }z\in\mathbb D.
\]
Since this holds for every
$w\in\overline{\mathbb D}$,
Theorem \ref{Bar Theta_n Charaterization}
implies that
$
\Theta(\mathbb D)\subseteq\overline{\mathbf{\Theta}}_n.$
Consequently,
$\Theta$ is a rational map from
$\mathbb D$ into
$\overline{\mathbf{\Theta}}_n$
whose radial boundary values belong to
$b\mathbf{\Theta}_n$
almost everywhere on $\mathbb T$.
Hence $\Theta$ is a rational
$\mathbf{\Theta}_n$-inner function.
This completes the proof.
	\end{proof}

	\begin{rem}\label{Rem 3}
Let $\Theta=(\theta_1,\dots,\theta_n)$ be a rational
$\mathbf{\Theta}_n$-inner function. Then, by
Lemma~\ref{Lem 4},
\[
\theta_i(z)=\overline{\theta_{n-i}(z)}\,\theta_n^p(z)
\quad\text{and}\quad
|\theta_n(z)|=1
\]
for almost every $z\in\mathbb T$. Consequently,
$
|\theta_i(z)|=|\theta_{n-i}(z)|$
for almost every $z\in\mathbb T$ and for every
$1\le i\le n-1$.
\end{rem}

	\subsection{Interpolation by Rational $\mathbf{\Theta}_n$-Inner Functions}

The \textit{classical Nevanlinna-Pick interpolation problem} on the unit disc asserts that if there exists a holomorphic function
$f:\mathbb D\to\mathbb D$
satisfying
$f(z_i)=w_i$
for $1\le i\le m$, then there exists a finite Blaschke product $B$ of degree at most $m$ such that
\[
B(z_i)=w_i,\quad 1\le i\le m.
\]
This shows that every solvable interpolation problem on $\mathbb D$ admits a rational inner interpolant.
A holomorphic map
$\widetilde{F}:\mathbb D\to M_n(\mathbb C)$
is called a \emph{rational inner matrix function} if its entries are rational functions with poles outside $\overline{\mathbb D}$ and
$\widetilde{F}(\zeta)$ is unitary for almost every $\zeta\in\mathbb T$.
Rational inner matrix functions play the analogous role in the matrix-valued Nevanlinna-Pick interpolation problem.
The following theorem is the corresponding analogue for interpolation in $M_n(\mathbb C)$.
\begin{thm}[\cite{McCarthy 1}]\label{Interpolation in M_n(C)}
Let $z_1,\ldots,z_m$ be distinct points in $\mathbb D$, and let
$W_1,\ldots,W_m\in M_n(\mathbb C)$.
Suppose there exists a holomorphic function
$F:\mathbb D\to M_n(\mathbb C)$
such that
\[
\|F(z)\|\le1,\quad z\in\mathbb D,
\]
and
\[
F(z_i)=W_i,\quad 1\le i\le m.
\]
Then there exists a holomorphic function
$G:\mathbb D\to M_n(\mathbb C)$,
whose entries are rational functions with poles outside
$\overline{\mathbb D}$, such that
$
G(\zeta)\ \text{is unitary for every } \zeta\in\mathbb T,$
and
$
G(z_i)=W_i,~ 1\le i\le m.$
\end{thm}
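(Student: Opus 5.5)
We plan to prove the statement by induction on the number $m$ of interpolation nodes, using a matrix version of the Schur algorithm. The one tool used throughout is the matrix Möbius map. For a strict contraction $W\in M_n(\mathbb C)$ put
\[
\mathcal M_W(X)=(I-WW^*)^{-1/2}(X-W)(I-W^*X)^{-1}(I-W^*W)^{1/2}.
\]
We will use three standard facts about it.
\begin{enumerate}
\item $\mathcal M_W$ maps the closed matrix ball $\{\|X\|\le 1\}$ onto itself.
\item It sends unitaries to unitaries and satisfies $\mathcal M_W(W)=0$.
\item Its inverse is $\mathcal M_{-W}$.
\end{enumerate}
We also write $b_a(z)=\frac{z-a}{1-\bar a z}$ for the scalar disc automorphism.

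\textbf{Base case $m=1$.} Take a singular value decomposition $W_1=U\,\mathrm{diag}(s_1,\ldots,s_n)\,V$ with $U,V$ unitary and $0\le s_j\le 1$. For each $j$ choose a scalar Blaschke factor of degree at most one with value $s_j$ at $z_1$: take $g_j=\frac{b_{z_1}+s_j}{1+s_jb_{z_1}}$ if $s_j<1$, and $g_j\equiv 1$ if $s_j=1$. Then $G=U\,\mathrm{diag}(g_1,\ldots,g_n)\,V$ is rational, has poles outside $\overline{\mathbb D}$, is unitary on $\mathbb T$, and satisfies $G(z_1)=W_1$.

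\textbf{Inductive step when $\|W_1\|<1$.} Set $F_1=\mathcal M_{W_1}\circ F$. This is a contractive analytic function with $F_1(z_1)=0$. By the operator-valued Schwarz lemma, $H=F_1/b_{z_1}$ is again contractive on $\mathbb D$. The data $H(z_i)=\mathcal M_{W_1}(W_i)/b_{z_1}(z_i)$ for $2\le i\le m$ are solvable, so the induction hypothesis gives a rational inner $G_1$ interpolating them. Then
\[
G=\mathcal M_{-W_1}\bigl(b_{z_1}G_1\bigr)
\]
is rational with poles outside $\overline{\mathbb D}$. It is unitary on $\mathbb T$, because $b_{z_1}G_1$ is unitary there and $\mathcal M_{-W_1}$ preserves unitaries. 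It also satisfies $G(z_i)=W_i$ for all $i$.

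\textbf{The case $\|W_1\|=1$, which is the main obstacle.} Here $\mathcal M_{W_1}$ is not defined, so we split off the isometric part first. Let $x$ be a unit vector with $\|W_1x\|=1$ and put $y=W_1x$. The function $z\mapsto\langle F(z)x,y\rangle$ is analytic, bounded by $1$ in modulus, and equals $1$ at $z_1$. By the maximum principle it is identically $1$. Equality in Cauchy--Schwarz then forces $F(z)x=y$ for all $z$, and the same argument applied to $F^*$ gives $F(z)^*y=x$. Doing this for the whole eigenspace of $W_1^*W_1$ for the eigenvalue $1$ yields unitaries $U,V$ and a $k\ge 1$ such that
\[
U F(z) V=\begin{pmatrix} I_k&0\\0&\widetilde F(z)\end{pmatrix}\qquad (z\in\mathbb D),
\]
where $\widetilde F$ is contractive of size $n-k$ and $\|\widetilde F(z_1)\|<1$. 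All the $W_i$ split in the same way.

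We then run an outer induction on the matrix size $n$, with the inner induction on $m$. We interpolate the data of $\widetilde F$ by a rational inner $\widetilde G$, using the previous case, and set $G=U^*\,(I_k\oplus\widetilde G)\,V^*$. This $G$ is rational, unitary on $\mathbb T$, and satisfies $G(z_i)=W_i$.

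Checking the Schwarz-lemma quotient and the block reduction carefully, in particular the rigidity argument that uses the maximum principle, is where we expect most of the work to lie.
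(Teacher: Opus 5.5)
The paper does not prove this statement. It imports it from McCarthy--Young and uses it only as a black box in the proof of Theorem~\ref{Interpolation in Theta_n}, so there is no internal argument to compare yours against.

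Your matrix Schur-algorithm proof is correct.
\begin{itemize}
\item The M\"obius step is legitimate. Since $\|W_1\|<1$, the matrix $I-W_1^*F(z)$ is invertible on $\mathbb D$, and $I+W_1^*b_{z_1}G_1$ is invertible on a neighbourhood of $\overline{\mathbb D}$. Hence $G$ is rational with no poles on $\overline{\mathbb D}$.
\item Distinctness of the nodes gives $b_{z_1}(z_i)\neq 0$ for $i\ge 2$.
\item The rigidity argument does split off the isometric part. The degenerate case $k=n$ just means that $F$ is a constant unitary, and $G=F$ works.
\end{itemize}
One point of wording: $F^*$ is not analytic, so ``the same argument applied to $F^*$'' is slightly loose. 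You do not need the maximum principle there. Once $F(z)x=y$ for all $z$, you have $\langle F(z)^*y,x\rangle=\langle y,F(z)x\rangle=1$, and Cauchy--Schwarz alone forces $F(z)^*y=x$. Either nesting of your induction works. The boundary case only ever calls the interior case at a smaller size, and the interior case only calls the full statement with one node fewer.

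The other standard route is the lurking-isometry (realization) argument. Positivity of the Pick matrix $\bigl[(I-W_iW_j^*)/(1-z_i\overline{z_j})\bigr]_{i,j}$ gives a Gram factorization. That factorization defines an isometry between finite-dimensional spaces, which extends to a unitary colligation $\begin{pmatrix}A&B\\C&D\end{pmatrix}$. Its transfer function $D+zC(I-zA)^{-1}B$ is then a rational inner interpolant. Being rational and bounded on $\mathbb D$, it has no poles on $\mathbb T$.
\begin{itemize}
\item The realization route treats boundary data $\|W_i\|=1$ uniformly, with no case split or nested induction, and makes the link with the Pick matrix explicit.
\item Your route uses nothing beyond the maximum principle, scalar Blaschke factors and matrix M\"obius maps, and it builds the interpolant explicitly one node at a time.
\end{itemize}
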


Consider the map $J_n : \mathbf{\Theta}_n \to \Omega_n$ defined by
\begin{equation}\label{J_n}
\begin{aligned}
J_n(\theta_1,\dots,\theta_n)
=
\begin{pmatrix}
0&0&\cdots&0&(-1)^{n-1}\theta_n^p\\
1&0&\cdots&0&(-1)^{n-2}\theta_{n-1}\\
\vdots&\vdots&\ddots&\vdots&\vdots\\
0&0&\cdots&0&-\theta_2\\
0&0&\cdots&1&\theta_1
\end{pmatrix},
\end{aligned}
\end{equation}
which is the companion matrix associated with the polynomial $P$.
Several approaches have been developed to solve the classical
Nevanlinna--Pick interpolation problem.
Among them,
Bercovici, Foias, and Tannenbaum~\cite{Bercovici}
obtained a proof using the Sz.-Nagy--Foias
commutant lifting theorem (see also \cite{Foias}).
Later, Costara~\cite[Theorem~1.1]{costara}
established the following interpolation criterion for the spectral ball.
	\begin{thm}\cite[Theorem~1.1]{costara}\label{Inter}
Let $z_1,\dots,z_m\in\mathbb D$ and
$W_1,\dots,W_m\in M_n(\mathbb C)$.
There exists a bounded analytic function
$F:\mathbb D\to M_n(\mathbb C)$ satisfying
\[
F(z_j)=W_j,\quad 1\le j\le m,
\]
and
\[
\sup_{|z|<1} r(F(z))<1,
\]
if and only if there exist invertible matrices
$X_1,\dots,X_m\in M_n(\mathbb C)$
and an analytic function
$G:\mathbb D\to M_n(\mathbb C)$
such that
$
G(z_j)=X_jW_jX_j^{-1}, 1\le j\le m,$
and
$
\sup_{|z|<1}\|G(z)\|<1.$
\end{thm}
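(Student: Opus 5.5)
The statement is Costara's criterion (Theorem 1.1 of \cite{costara}); I would prove it by reducing everything to $\Gamma_n$ through the companion-matrix picture. The easy direction comes first. Given $G$ with $\sup\|G\|<1$ and $G(z_j)=X_jW_jX_j^{-1}$, we already have $r(G(z))\le\|G(z)\|$. The trouble is that $G$ itself does not interpolate $W_j$; it only interpolates conjugates of them. So I would pass to the spectral data: set $\sigma(z)=\mathbf s(\text{eigenvalues of }G(z))$, i.e.\ the coefficients of $\det(\lambda I-G(z))$. This map is holomorphic, and since $\sup r(G)\le\sup\|G\|<1$ it takes values in a compact subset of $G_n$. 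Moreover $\sigma(z_j)$ equals the characteristic data of $W_j$, because conjugation preserves the spectrum.

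Next I would define $F(z)=J(\sigma(z))$, where $J$ is the companion matrix from \eqref{J_n} with $p=1$. Then $F$ is bounded and analytic, and $r(F(z))=r(G(z))\le\sup\|G\|<1$. However, $F(z_j)$ is only the companion matrix of $W_j$, and this coincides with a conjugate of $W_j$ only when $W_j$ is nonderogatory. This is the main obstacle, and I would resolve it as follows. When some $W_j$ is derogatory, I would replace $F$ by $F(z)+B(z)H(z)$, where $B$ is the Blaschke product with zeros $z_1,\dots,z_m$ and $H$ is a suitable matrix polynomial. The right choice is a conjugation $F(z)=Y(z)J(\sigma(z))Y(z)^{-1}+\dots$. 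What I would actually attempt is a Lagrange-interpolating construction: choose an invertible-valued holomorphic $S(z)$ and a holomorphic $N(z)$ that is nilpotent-compatible and vanishes where it should, so that $F=S\,(\text{triangular matrix with diagonal given by eigenvalue functions})\,S^{-1}$ hits $W_j$. This requires splitting the eigenvalues locally, which is where the difficulty lies. If that becomes too heavy, I would instead restrict the claim to the case where the $W_j$ are nonderogatory; this is the case the paper uses via $J_n$.

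For the converse, suppose $F$ is given with $\sup r(F)<1$. Then $z\mapsto$ (characteristic coefficients of $F(z)$) maps into a compact subset of $G_n$, since $\sup r<1$ keeps all the roots in a disc of radius $\rho<1$. By Theorem~\ref{Theta_n Characterization} (equivalence (2)$\Leftrightarrow$(3) with $p=1$), the associated $f$ satisfies $\sup|f|<1$. Following Costara, a point of $G_n$ with roots in $|\lambda|\le\rho$ is realized by a matrix of norm $<1$: in each fibre, an upper-triangular Schur form $T$ with small off-diagonal entries, scaled by $\operatorname{diag}(1,\epsilon,\dots,\epsilon^{n-1})$, has norm at most $\rho+O(\epsilon)$. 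I would construct $G$ holomorphically by taking the companion matrix $C(z)=J(\sigma(z))$ and conjugating by the constant diagonal scaling $D_\epsilon=\operatorname{diag}(1,\epsilon,\dots)$. This gives $\|D_\epsilon C D_\epsilon^{-1}\|\le \epsilon\cdot\text{const}+|\text{last column terms}|$. That bound is not yet small, so I would first rescale by a fixed Möbius-type reparametrisation of the roots. The step I expect to be delicate is exactly this uniform norm control; if it fails, I would fall back on the scalar Nevanlinna--Pick criterion applied to $f$ together with Theorem~\ref{Interpolation in M_n(C)}.
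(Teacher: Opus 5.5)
Both directions have genuine gaps, and they share a cause. Passing from a matrix function to its characteristic coefficients, and then to companion matrices, forgets the similarity class, which is exactly what the statement controls. (The paper gives no proof to compare with; it only quotes the result from Costara.)

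\textbf{From $G$ to $F$.} Your construction fails at the nodes:
\begin{itemize}
\item $J(\sigma(z_j))$ is never $W_j$ itself, and it is not even similar to $W_j$ when $W_j$ is derogatory.
\item Adding $B(z)H(z)$ with $B(z_j)=0$ cannot change anything at the nodes.
\item The triangular construction with local eigenvalue splitting is never specified.
\item Even for nonderogatory $W_j$ you would still need a holomorphic invertible $Y$ with prescribed values $Y(z_j)$.
\end{itemize}
That last tool is all this direction needs, applied to $G$ directly. Every invertible complex matrix has a logarithm, so write $X_j=e^{A_j}$. Let $A$ be the matrix Lagrange polynomial with $A(z_j)=A_j$ (the $z_j$ must be distinct, as is tacitly assumed), and put $X=e^{A}$. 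Then $X^{\pm1}=e^{\pm A}$ are bounded and holomorphic on $\mathbb D$. Hence $F:=X^{-1}GX$ is bounded, $F(z_j)=W_j$, and $r(F(z))=r(G(z))\le\|G(z)\|$.

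\textbf{From $F$ to $G$.} This is the substantive direction, and your sketch does not reach it.
\begin{itemize}
\item The companion matrix again loses Jordan structure. For $n=2$ and $W_j=0$ you need $G(z_j)=0$, whereas every conjugate of $J_2(0,0)$ is a nonzero nilpotent.
\item No diagonal similarity, constant or not, can bring the norm of $J(\sigma(z))$ below $1$. It leaves the corner entry $\operatorname{tr}F(z)$ unchanged, and this can exceed $1$ in modulus although $r(F)<1$, e.g.\ for $F\equiv\left(\begin{smallmatrix}0.9&1\\0&0.9\end{smallmatrix}\right)$.
\item A reparametrisation of the roots is not available holomorphically, since the eigenvalues of $F(z)$ branch, and it would change the spectra at the nodes anyway.
\item The fallback is circular. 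The scalar Pick condition for $f$ is only necessary, and the McCarthy--Young theorem takes a contractive interpolant as input.
\end{itemize}
The missing idea is a bounded holomorphic similarity applied to $F$ itself: $G=MFM^{-1}$ with $M,M^{-1}\in H^\infty(M_n)$ and $\sup\|G\|<1$. Then $X_j:=M(z_j)$ gives $G(z_j)=X_jW_jX_j^{-1}$, and the Jordan structure is preserved automatically. This is the Bercovici--Foias--Tannenbaum spectral commutant lifting theorem behind the quoted result. In the matrix case it can be proved directly:
\begin{itemize}
\item Fix $\rho=\sup r(F)<\rho_1<\rho_2<1$. The determinant of $\lambda I-F(z)$ has modulus at least $(|\lambda|-\rho)^n$. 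By the adjugate formula the resolvent is therefore uniformly bounded on $|\lambda|=\rho_1$, and the Riesz--Dunford integral gives $\|F(z)^k\|\le C\rho_1^k$ uniformly in $z$.
\item Hence $P=\sum_{k\ge0}\rho_2^{-2k}(F^k)^*F^k$ on $\mathbb T$ satisfies $I\le P\le C'I$ and $F^*PF=\rho_2^2(P-I)\le\rho_2^2P$.
\item A matrix outer (Devinatz--Wiener--Masani) factorization $P=M^*M$ with $M^{\pm1}\in H^\infty(M_n)$ yields $\|MFM^{-1}\|\le\rho_2$ a.e.\ on $\mathbb T$, hence on $\mathbb D$ by the maximum principle.
\end{itemize}
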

In this subsection, we study the interpolation problem for
$\mathbf{\Theta}_n$. As a first application of
Theorem~\ref{Theta_n Characterization}, we obtain a necessary condition
for the existence of a holomorphic interpolating map into
$\mathbf{\Theta}_n$.

\begin{thm}\label{Interpolation}
Let $z_1,\ldots,z_N\in\mathbb D$ be distinct points, and let
\[
(\theta_{1,j},\ldots,\theta_{n,j})\in\mathbf{\Theta}_n,
\quad
1\le j\le N.
\]
Suppose there exists a holomorphic map
$h:\mathbb D\to\mathbf{\Theta}_n$ satisfying
\[
h(z_j)=
(\theta_{1,j},\ldots,\theta_{n,j}),
\quad
1\le j\le N.
\]
Then, for every fixed $z\in\mathbb D$, the Pick matrix
\[
\left(
\frac{1-\overline{w_j}w_i}
{1-\overline{z_j}z_i}
\right)_{i,j=1}^N
\]
is positive semidefinite, where
\[
w_j=
\frac{
n(-1)^n\theta_{n,j}^{\,p}z^{\,n-1}
+(n-1)(-1)^{n-1}\theta_{n-1,j}z^{\,n-2}
+\cdots-\theta_{1,j}
}
{
n-(n-1)\theta_{1,j}z
+\cdots
+(-1)^{n-1}\theta_{n-1,j}z^{\,n-1}
},
\quad
1\le j\le N.
\]
\end{thm}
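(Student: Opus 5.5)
Fix $z\in\mathbb D$ and write $h=(h_1,\ldots,h_n)$. The plan is to produce a holomorphic self-map of $\mathbb D$ that sends $z_j$ to $w_j$, and then invoke the classical Nevanlinna--Pick theorem. Define
\[
g_z(\lambda)=\frac{n(-1)^n h_n(\lambda)^p z^{n-1}+(n-1)(-1)^{n-1}h_{n-1}(\lambda)z^{n-2}+\cdots-h_1(\lambda)}{n-(n-1)h_1(\lambda)z+\cdots+(-1)^{n-1}h_{n-1}(\lambda)z^{n-1}},\qquad \lambda\in\mathbb D .
\]
In other words, $g_z(\lambda)$ is the function $f$ of \eqref{f}, built from the point $h(\lambda)$ and evaluated at $z$. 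Since $h(z_j)=(\theta_{1,j},\ldots,\theta_{n,j})$, we have $g_z(z_j)=w_j$ directly from the definition.

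The main steps are as follows.

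\emph{Step 1: the denominator does not vanish.} For each $\lambda\in\mathbb D$ we have $h(\lambda)\in\mathbf{\Theta}_n$. The proof of $(1)\Rightarrow(3)$ in Theorem~\ref{Theta_n Characterization} shows that the denominator $R$ built from $h(\lambda)$ has no zeros on a neighbourhood of $\overline{\mathbb D}$. This uses Lucas' theorem applied to $P'$, whose zeros lie in $\mathbb D$. In particular the denominator is nonzero at our fixed $z\in\mathbb D$. Since the numerator and denominator are polynomials in the holomorphic functions $h_1,\ldots,h_n$, the map $\lambda\mapsto g_z(\lambda)$ is holomorphic on $\mathbb D$.

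\emph{Step 2: $g_z$ maps into $\mathbb D$.} Condition (3) of Theorem~\ref{Theta_n Characterization} gives $\sup_{|w|\le1}|f(w)|<1$ for $f$ built from $h(\lambda)$. Hence $|g_z(\lambda)|<1$ for every $\lambda\in\mathbb D$, so $g_z:\mathbb D\to\mathbb D$ is holomorphic.

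\emph{Step 3: apply Pick's theorem.} Since $g_z(z_j)=w_j$ for all $j$, the necessity direction of the classical Nevanlinna--Pick theorem applies. Concretely, the kernel $\frac{1-\overline{g_z(\mu)}g_z(\lambda)}{1-\overline\mu\lambda}$ is positive semidefinite, because $g_z$ belongs to the unit ball of $H^\infty$ and multiplication by $g_z$ is a contraction on $H^2$. Restricting this kernel to the points $z_1,\ldots,z_N$ gives exactly the stated Pick matrix $\left(\frac{1-\overline{w_j}w_i}{1-\overline{z_j}z_i}\right)_{i,j}$, which is therefore positive semidefinite.

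The only delicate point is Step 1, namely the holomorphy of $g_z$ in $\lambda$. This requires the denominator to be nonvanishing for every $\lambda$, not only at the nodes, and that is supplied uniformly by the Lucas-theorem argument above. Everything else is a direct substitution.
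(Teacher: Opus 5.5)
Your proposal is correct and follows the paper's route. The paper fixes $z$, defines $g$ on $\mathbf{\Theta}_n$ as the function $f$ of \eqref{f} evaluated at $z$, and uses Theorem~\ref{Theta_n Characterization} to get a nonvanishing denominator and $|g|<1$; it then applies the classical Nevanlinna--Pick theorem to $F_z=g\circ h$, which is exactly your $g_z$, and your Lucas-theorem and $H^2$-contraction remarks just spell out steps the paper leaves implicit.
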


	\begin{proof}
Suppose there exists a holomorphic map $
h:\mathbb D\longrightarrow\mathbf{\Theta}_n$
such that
\[
h(z_j)=
(\theta_{1,j},\ldots,\theta_{n,j}),
\quad
1\le j\le N.
\]
Fix $z\in\mathbb D$ and define
$
g:\mathbf{\Theta}_n\longrightarrow\mathbb C
$
by
\[
g(\theta_1,\ldots,\theta_n)
=
\frac{
n(-1)^n\theta_n^p z^{n-1}
+(n-1)(-1)^{n-1}\theta_{n-1}z^{n-2}
+\cdots-\theta_1
}
{
n-(n-1)\theta_1z
+\cdots
+(-1)^{n-1}\theta_{n-1}z^{n-1}
}.
\]
Thus $g$ is obtained by fixing the parameter $z$ in the rational function
$f$ defined in \eqref{f}. Since
$(\theta_1,\ldots,\theta_n)\in\mathbf{\Theta}_n$,
Theorem~\ref{Theta_n Characterization} implies that the denominator is
nonzero on $\overline{\mathbb D}$ and
\[
|g(\theta_1,\ldots,\theta_n)|
=
|f(z;\theta_1,\ldots,\theta_n)|
<1.
\]
Hence $g$ is holomorphic on $\mathbf{\Theta}_n$ and maps
$\mathbf{\Theta}_n$ into $\mathbb D$.
Therefore,
\[
F_z:=g\circ h:\mathbb D\longrightarrow\mathbb D
\]
is holomorphic. Moreover,
\[
F_z(z_j)
=
g(h(z_j))
=
g(\theta_{1,j},\ldots,\theta_{n,j})
=
w_j,
\quad
1\le j\le N.
\]
By the classical Nevanlinna--Pick theorem, the Pick matrix
\[
\left(
\frac{1-\overline{w_j}w_i}
{1-\overline{z_j}z_i}
\right)_{i,j=1}^N
\]
is positive semidefinite. This completes the proof.
\end{proof}
The following theorem is analogous to Theorem \ref{Inter}.
\begin{thm}\label{Interpolation in Theta_n}
		Let $z_1, \dots, z_N$ be distinct points in $\mathbb{D}$ and $(\theta_{1, j}, \dots, \theta_{n, j}) \in \mathbf{\Theta}_n$ for $1 \le j \le N$. If there exists an holomorphic function $f : \mathbb{D} \to \mathbf{\Theta}_n$ such that $f(z_j) = (\theta_{1, j}, \dots, \theta_{n, j})$ for $1 \le j \le N$, then there exists a rational $\mathbf{\Theta}_n$-inner function $g : \mathbb{D} \to \mathbf{\Theta}_n$ such that $g(z_j) = (\theta_{1, j}, \dots, \theta_{n, j})$ for $1 \le j \le N$.
	\end{thm}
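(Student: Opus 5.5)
Given a holomorphic $f:\mathbb D\to\mathbf{\Theta}_n$ interpolating the data, I would build a rational inner matrix function whose spectral image solves the problem. The natural bridge is the companion matrix $J_n$ of \eqref{J_n} combined with Costara's Theorem~\ref{Inter} and the McCarthy--Young Theorem~\ref{Interpolation in M_n(C)}.

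\emph{Step 1: pass to matrices.} Set $F=J_n\circ f:\mathbb D\to\Omega_n$. This map is holomorphic and satisfies $F(z_j)=W_j:=J_n(\theta_{1,j},\ldots,\theta_{n,j})$. Since $f(\mathbb D)\subset\mathbf{\Theta}_n$, we have $r(F(z))<1$ on $\mathbb D$. To get $\sup_z r(F(z))<1$, as Theorem~\ref{Inter} requires, I would first replace $f$ by $f_r(z)=f(rz)$ and the nodes by $z_j/r$, with $r$ close to $1$. Alternatively, one can compose with a slight dilation of $\mathbf{\Theta}_n$ and absorb the change at the end. I expect this to be a routine but annoying technicality.

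\emph{Step 2: Costara, then McCarthy--Young.} Theorem~\ref{Inter} provides invertible $X_j$ and an analytic $G$ with $\|G\|<1$ and $G(z_j)=X_jW_jX_j^{-1}$. Theorem~\ref{Interpolation in M_n(C)} then replaces $G$ by a rational inner matrix function $\widetilde G$ with the same values at the nodes and $\widetilde G(\zeta)$ unitary on $\mathbb T$.

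\emph{Step 3: return to $\mathbf{\Theta}_n$.} For each $z$, the characteristic polynomial of $\widetilde G(z)$ has coefficients $s_i(\lambda_1(z),\ldots,\lambda_n(z))$ with $|\lambda_i|\le1$, so $(s_1,\ldots,s_n)\circ\widetilde G$ is a rational $\Gamma_n$-inner function. At the nodes its values are $(\theta_{1,j},\ldots,\theta_{n-1,j},\theta_{n,j}^p)$, because similarity preserves spectra. I then need a rational $g_n$ with $g_n^p=\det\widetilde G$ and $g_n(z_j)=\theta_{n,j}$. Setting
\[
g=(s_1,\ldots,s_{n-1},g_n)\circ\widetilde G ,
\]
Theorem~\ref{Bar Theta_n Charaterization}(2) gives $g(\mathbb D)\subseteq\overline{\mathbf{\Theta}}_n$. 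Proposition~\ref{Prop 3} gives boundary values in $b\mathbf{\Theta}_n$, since $|g_n|=1$ on $\mathbb T$.

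\emph{Main obstacle: extracting the $p$-th root.} The function $\det\widetilde G$ is a finite Blaschke product, but it need not have a rational $p$-th root matching the prescribed $\theta_{n,j}$; its zeros need not have multiplicities divisible by $p$. I would address this before applying the matrix theorems. Write $f=(f_1,\ldots,f_n)$, so that $f_n^p$ is the determinant of $F$, and run the interpolation for the scalar component separately. First apply the scalar Nevanlinna--Pick theorem to $f_n:\mathbb D\to\mathbb D$ to get a Blaschke product $b$ with $b(z_j)=\theta_{n,j}$. Then seek $\widetilde G$ with $\det\widetilde G=b^p$, for instance by redoing Step 2 on $\operatorname{diag}$-modified data, or by multiplying $\widetilde G$ by a suitable scalar Blaschke factor and correcting.

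If this coupling fails, I would instead apply Theorem~\ref{Partial Converse}. Take $D$ with $b=D^{\sim k}/D$. Obtain $E_i$ by interpolating the values $\theta_{i,j}D(z_j)^p$ within the model space $\mathcal K_\Theta$, which has dimension $pk+1$, subject to the symmetry (5). The condition on $\Gamma_{n-1}$ along $\mathbb T$ then has to be verified, and this is where I expect the real difficulty to lie.

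Finally, membership of $g(\mathbb D)$ in the open set $\mathbf{\Theta}_n$ rather than its closure should follow from the fact that a nonconstant $\mathbf{\Theta}_n$-inner function cannot touch $\partial\mathbf{\Theta}_n$ inside $\mathbb D$. This uses the maximum principle for $z\mapsto f(w,g(z))$, together with Corollary~\ref{Characterization of Bd Theta_n}.
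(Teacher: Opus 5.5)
There is a genuine gap, and it is exactly where you locate it. Your Steps 1 and 2 follow the paper's route: the companion matrix $J_n$, Theorem~\ref{Inter} applied after a dilation, then Theorem~\ref{Interpolation in M_n(C)}. But nothing in the proposal produces a rational inner $g_n$ with $g_n^p=\det\widetilde G$ and $g_n(z_j)=\theta_{n,j}$ for all $j$, with one branch fitting every node. The McCarthy--Young interpolant gives no control over the zeros of $\det\widetilde G$, and your sketched repairs cannot supply it:
\begin{itemize}
\item \emph{Correcting $\widetilde G$ afterwards.} Let $U,V$ be rational inner matrix functions such that $V\widetilde G U$ still has the prescribed characteristic polynomials at the nodes. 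If some $\theta_{n,j}\neq0$, the finite Blaschke product $\det U\cdot\det V$ equals $1$ at the interior point $z_j$, so it is identically $1$ and $\det(V\widetilde GU)=\det\widetilde G$. Thus no correction of this kind changes the determinant. Multiplying by a nonconstant scalar Blaschke factor simply destroys the data at the nodes.
\item \emph{Choosing $b$ first.} Taking $b$ from scalar Nevanlinna--Pick and then demanding $\det\widetilde G=b^p$ is a new coupled problem. Once $b^p$ is fixed, Theorem~\ref{Description Theta_n Inner Function} confines the other coordinates to a finite-dimensional family $E_i/D^p$ with $\deg E_i\le pk$. Nothing you invoke shows this family contains an interpolant.
\item \emph{The fallback.} The route through Theorem~\ref{Partial Converse} leaves open the condition $(\gamma_1\theta_1,\dots,\gamma_{n-1}\theta_{n-1})\in\Gamma_{n-1}$ on $\mathbb T$ together with the interpolation constraints. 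That is the whole problem.
\end{itemize}

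You should not expect to borrow the missing step from the paper. There one sets $g=\Pi_n\circ\widetilde G$ with $\Pi_n(A)=\theta(\sigma(A))$. This is polynomial in $A$ because $m/p\in\mathbb N$, so no root is taken and $g$ is rational with boundary values in $b\mathbf{\Theta}_n$. However, the closing equality $\Pi_n(F(z_j))=f(z_j)$ fails for $m>1$. The matrix $J_n(\theta(w))$ has spectrum $\{w_1^m,\dots,w_n^m\}$, so $\Pi_n(J_n(\theta(w)))=\theta(w_1^m,\dots,w_n^m)$, whose last coordinate is $\theta_n(w)^m$. If you instead read $\Pi_n(A)$ as the first $n-1$ characteristic coefficients of $A$ plus a $p$-th root of $\det A$, then $\Pi_n\circ J_n$ is the identity. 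But that reading needs precisely your $p$-th root of $\det\widetilde G$.

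Two smaller points.
\begin{itemize}
\item In Step 1, rescaling the nodes to $z_j/r$ only works with an extra normal-families limit as $r\to1$. The paper instead applies Theorem~\ref{Inter} to $\gamma F$ and takes a Montel limit. This gives a contractive $G$ with $\sigma(G(z_j))=\sigma(W_j)$, which is all Theorem~\ref{Interpolation in M_n(C)} needs.
\item Your final paragraph is false as stated. The map $\theta(z,1)=(z^m+1,z^{m/p})$ is a nonconstant map into $\overline{\mathbf{\Theta}}_2$ with boundary values in $b\mathbf{\Theta}_2$, yet all its values lie in $\partial\mathbf{\Theta}_2$. In your construction, openness comes from the nodes instead. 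If $\widetilde G(z_0)x=\lambda x$ with $|\lambda|=\|x\|=1$, then $\langle\widetilde G(z)x,x\rangle\equiv\lambda$ by the maximum principle. Hence $\widetilde G(z)x=\lambda x$ for every $z$, contradicting $\sigma(\widetilde G(z_j))\subset\mathbb D$.
\end{itemize}
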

	
\begin{proof}
Suppose there exists a holomorphic map
$f:\mathbb D\to\mathbf{\Theta}_n$
satisfying
\[
f(z_j)=
(\theta_{1,j},\ldots,\theta_{n,j}),
\quad
1\le j\le N.
\]
Define
\[
F=J_n\circ f:\mathbb D\to\Omega_n.
\]
Then $F$ is bounded and
\[
F(z_j)=J_n(\theta_{1,j},\ldots,\theta_{n,j}),
\quad
1\le j\le N.
\]
For $0<\gamma<1$, we have
\[
\sup_{|z|<1}r(\gamma F(z))
=\gamma\sup_{|z|<1}r(F(z))
<1.
\]
Hence, by Theorem~\ref{Inter}, there exists a holomorphic map
$
G_\gamma:\mathbb D\to M_n(\mathbb C)$
such that
$G_\gamma(z_j)$ is similar to
$\gamma F(z_j)$ for each $j$, and
$
\sup_{|z|<1}\|G_\gamma(z)\|<1.$
Thus $\{G_\gamma\}_{0<\gamma<1}$ is uniformly bounded.
By Montel's theorem, there exist a sequence
$\gamma_m\to1$ and a holomorphic map
$G:\mathbb D\to M_n(\mathbb C)$ such that
$G_{\gamma_m}\to G$
uniformly on compact subsets of $\mathbb D$.
Consequently,
$\|G(z)\|\le1$ for all $z\in\mathbb D$.
Since the spectrum depends continuously on matrix entries,
\[
\sigma(G(z_j))
=
\sigma(F(z_j)),
\quad
1\le j\le N.
\]
Applying Theorem~\ref{Interpolation in M_n(C)},
there exists a rational inner matrix function
$
\widetilde G:\mathbb D\to M_n(\mathbb C)$
such that
\[
\widetilde G(z_j)=G(z_j),
\quad
1\le j\le N.
\]
Define
$
g=\Pi_n\circ\widetilde G.$
Since $\Pi_n$ is polynomial,
$g$ is a rational holomorphic map.
Moreover, the boundary values of
$\widetilde G$
are unitary almost everywhere on $\mathbb T$.
Hence
$
\Pi_n(\widetilde G(e^{it}))
\in b\mathbf{\Theta}_n$
for almost every $e^{it}\in\mathbb T$,
which shows that
$g$ is a rational
$\mathbf{\Theta}_n$-inner function.
Finally,
\[
\begin{aligned}
g(z_j)
&=\Pi_n(\widetilde G(z_j))
 =\Pi_n(G(z_j))
 =\theta(\sigma(G(z_j)))  \\
&=\theta(\sigma(F(z_j)))
 =\Pi_n(F(z_j))
 =f(z_j)
 =(\theta_{1,j},\ldots,\theta_{n,j}),
\end{aligned}
\]
for every $1\le j\le N$.
This completes the proof.
\end{proof}	
	
We now present the main result of this subsection, which provides an explicit description of the rational $\mathbf{\Theta}_n$-inner interpolants corresponding to the interpolation data in Theorem~\ref{Interpolation in Theta_n}.
	
	\begin{thm}\label{Interpolation Main Result}
		Let
\(z_1,\ldots,z_N\)
be distinct points in \(\mathbb D\), and let
\[
(\theta_{1,j},\ldots,\theta_{n,j})
\in\mathbf{\Theta}_n,
\quad
1\le j\le N.
\]
Suppose there exists a holomorphic map
\[
f:\mathbb D\to\mathbf{\Theta}_n
\]
such that
\[
f(z_j)=
(\theta_{1,j},\ldots,\theta_{n,j}),
\quad
1\le j\le N.
\]
Then there exists a rational
\(\mathbf{\Theta}_n\)-inner function
\(g:\mathbb D\to\mathbf{\Theta}_n\)
interpolating the given data.
Moreover, \(g\) admits the following explicit representation.		\begin{enumerate}
			\item Suppose that $n=2l+1$, where $l\in\mathbb N$.
Then there exist
\[
k\in\mathbb N,\quad
\xi\in\mathbb T,\quad
\alpha_1,\ldots,\alpha_k\in\mathbb D,
\]
such that, for each $1\le j\le l$, there exist
\[
t_j,q_j\in\mathbb N\cup\{0\},\qquad
r_j\in\mathbb R,
\]
\[
\eta_j,\delta_j,\xi_{j,1},\ldots,\xi_{j,t_j}\in\mathbb T,
\qquad
\beta_{j,1},\ldots,\beta_{j,q_j}\in\mathbb D,
\]
satisfying
			\begin{equation*}
				\begin{aligned}
					t_j + q_j &= pk, \,\, \eta_j\delta_j\left(\prod_{i=1}^{t_j}\xi_{j, i}\right) = \xi \,\, \text{for} \,\, 1 \le j \le l.
				\end{aligned}
			\end{equation*}
			Define
\[
\theta_n(z)
=
\xi^{1/p}\prod_{i=1}^k
\frac{z-\alpha_i}{1-\overline{\alpha_i}z},
\]
and, for each $1\le j\le l$, define
			\begin{equation}\label{theta_j, theta_{n-j} Description}
				\begin{aligned}
					&\theta_j(z) =
					r_j\eta_j\frac{\left(\displaystyle \prod_{i=1}^{t_j} (\xi_{j, i} + z)\right)\left(\displaystyle \prod_{i=1}^{q_j} f_{j, i}(z)\right)}{\left(\displaystyle \prod_{i=1}^{k} (\overline{\alpha}_iz - 1)^p\right)}
					\,\,\,\, \text{and} \,\,\,\, \theta_{n-j}(z) =
					r_j\delta_j\frac{\left(\displaystyle \prod_{i=1}^{t_j} (\xi_{j, i} + z)\right)\left(\displaystyle \prod_{i=1}^{q_j} g_{j, i}(z)\right)}{\left(\displaystyle \prod_{i=1}^{k} (\overline{\alpha}_iz - 1)^p\right)}
				\end{aligned}
			\end{equation}
			where, for $1 \le i \le q_j$, either $f_{j, i}(z) = \overline{\beta}_{j, i}z - 1$ and $g_{j, i}(z) = \beta_{j, i} - z$, or $f_{j, i}(z) = \beta_{j, i} - z$ and $g_{j, i}(z) = \overline{\beta}_{j, i}z - 1$.  The rational map
\[
g(z)=\bigl(\theta_1(z),\ldots,\theta_n(z)\bigr)
\]
is the required $\mathbf{\Theta}_n$-inner function interpolating the prescribed data.		
			\item If $n = 2l$ for $l \in \mathbb{N}$, then, for $1 \le j \le l-1$, the functions $\theta_j$ are defined as in
\eqref{theta_j, theta_{n-j} Description}, while $\theta_l$ is given by
			\begin{equation}\label{theta_l Description}
				\begin{aligned}
					\theta_l(z) &= r_l\eta_l \frac{\left(\displaystyle\prod_{i=1}^{t_l}(\xi_{l, i} + z)\right)\left(\displaystyle\prod_{i=1}^{q_l}(\beta_{l, i} - z)(\overline{\beta}_{l, i}z - 1)\right)}{\displaystyle\prod_{i=1}^{k}(\overline{\alpha}_iz - 1)^p},
				\end{aligned}
			\end{equation}
			where $t_l, q_l \in \mathbb{N} \cup \{0\}$ and $t_l + 2q_l = pk, r_l \in \mathbb{R}, \beta_{l, 1}, \dots, \beta_{l, q_l} \in \mathbb{D}$ and $\eta_l, \xi_{l, 1}, \dots, \xi_{l, t_l} \in \mathbb{T}$ are such that $\eta^2_l\left(\displaystyle\prod_{i=1}^{t_l}\xi_{l, i}\right) = \xi$. The rational map
\[
g(z)=\bigl(\theta_1(z),\ldots,\theta_n(z)\bigr)
\]
is the required $\mathbf{\Theta}_n$-inner function interpolating the prescribed data.
		\end{enumerate}
	\end{thm}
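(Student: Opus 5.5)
The proof has two stages. For existence I would simply invoke Theorem~\ref{Interpolation in Theta_n}: the holomorphic map $f:\mathbb D\to\mathbf{\Theta}_n$ yields a rational $\mathbf{\Theta}_n$-inner function $g=(\theta_1,\ldots,\theta_n)$ with $g(z_j)=(\theta_{1,j},\ldots,\theta_{n,j})$. What remains is to show that every rational $\mathbf{\Theta}_n$-inner function has the stated explicit form. Since the interpolating $g$ is one of them, it then admits that representation.

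\emph{Last coordinate.} Let $pk=\deg(g)$. By Lemma~\ref{Lem 4}, $\theta_n$ is a rational inner function, hence a finite Blaschke product of degree $k$. Write $\theta_n=\xi^{1/p}\prod_{i=1}^k\frac{z-\alpha_i}{1-\overline{\alpha_i}z}$ with $\alpha_i\in\mathbb D$ and unimodular constant written as $\xi^{1/p}$, $\xi\in\mathbb T$. By Theorem~\ref{Description Theta_n Inner Function}, with $D(z)=c\prod_{i=1}^k(\overline{\alpha}_iz-1)$ and a suitable normalizing constant $c$, we get $\theta_i=E_i/D^p$, $\deg E_i\le pk$, and $E_i=E_{n-i}^{\sim pk}$. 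Absorbing $c^p$ into the constants, the common denominator is $\prod_{i=1}^k(\overline\alpha_iz-1)^p$ as displayed.

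\emph{Numerator factorization.} This is the core of the argument. Fix $1\le j\le l$ (with $j<n-j$) and factor $E_j$ by its roots, completing its degree to exactly $pk$ with roots at $\infty$. Then sort the roots into three groups:
\begin{enumerate}
\item roots on $\mathbb T$, written as factors $(\xi_{j,i}+z)$;
\item roots $\beta\in\mathbb D$, giving factors $\beta-z$;
\item roots outside $\overline{\mathbb D}$ (including $\infty$), giving factors $\overline\beta z-1$ with $\beta\in\mathbb D$.
\end{enumerate}
Under the reflection $\sim pk$, a root $\zeta$ goes to $1/\overline\zeta$. Hence unimodular roots are preserved, while a root $\beta\in\mathbb D$ of $E_j$ corresponds to the factor $\overline\beta z-1$ of $E_{n-j}=E_j^{\sim pk}$, and conversely. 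This produces exactly the alternatives $f_{j,i}/g_{j,i}$ in the statement, and the counts satisfy $t_j+q_j=pk$.

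\emph{Constants.} Apply the identity $E_j=E_{n-j}^{\sim pk}$ factor by factor:
\begin{itemize}
\item $(\xi+z)^{\sim1}=1+\overline\xi z=\overline\xi(\xi+z)$;
\item $(\beta-z)^{\sim1}=\overline\beta z-1$;
\item $(\overline\beta z-1)^{\sim1}=\beta-z$.
\end{itemize}
Comparing the leading constants $c_j,c_{n-j}$ then gives $c_j=\overline{c_{n-j}}\prod_i\overline{\xi_{j,i}}$ up to the normalization of $D$. The unimodular constant of $\theta_n^p$ also enters, through $D^{\sim k}$ versus $D$, and this yields $\xi$. Writing $c_j=r_j\eta_j$ and $c_{n-j}=r_j\delta_j$ with a common modulus $|r_j|$ is legitimate because $|c_j|=|c_{n-j}|$ (compare Remark~\ref{Rem 3}); this gives $\eta_j\delta_j\prod_i\xi_{j,i}=\xi$.

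\emph{Even case.} For $n=2l$, take $j=l=n-l$. The self-reciprocity $E_l=E_l^{\sim pk}$ forces the roots off $\mathbb T$ to occur in pairs $\beta,1/\overline\beta$, giving the factors $(\beta-z)(\overline\beta z-1)$ with $t_l+2q_l=pk$. The constant condition becomes $\eta_l^2\prod_i\xi_{l,i}=\xi$.

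The main obstacle I expect is the bookkeeping of the unimodular constants. One must track how $\xi$ and the normalization of $D$ (only unique up to a real factor, by Theorem~\ref{Description Theta_n Inner Function}) propagate through the reflections, and handle degree drops, i.e.\ roots at $\infty$, which are encoded via $\beta=0$ factors $-1$. I would fix $D$ first, so that $D^{\sim k}/D=\theta_n$ exactly, and only then read off each constant relation.
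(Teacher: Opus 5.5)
Your argument is correct, but for the explicit representation it takes a different route from the paper. The existence step is the same: both you and the paper invoke Theorem~\ref{Interpolation in Theta_n}.

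\textbf{The paper's route.} It places each $\theta_j$ in the model space $\mathcal K_{\Theta}$, giving $\theta_j=P_j/\prod_i(1-\overline{\alpha_i}z)^p$ with $\deg P_j\le pk$. It then observes that $\theta_j\theta_{n-j}/\theta_n^p=|\theta_{n-j}|^2\ge0$ on $\mathbb T$. Next it uses the factorization of non-negative rational functions on the circle (Duren) to factor the \emph{product} $P_jP_{n-j}$. Finally it asserts the split \eqref{Inter 4} and the relation $\eta_j\delta_j\prod_i\xi_{j,i}=\xi$ without further computation.

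\textbf{Your route.} You factor $E_j$ root by root and read off $E_{n-j}$ from the reciprocity $E_j=E_{n-j}^{\sim pk}$ of Theorem~\ref{Description Theta_n Inner Function}.

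\textbf{What your route buys.} It needs neither the model space nor the positivity factorization theorem. It also supplies exactly the step the paper leaves implicit. A factorization of the product $P_jP_{n-j}$ alone does not say which factors belong to $\theta_j$ and which to $\theta_{n-j}$; a priori, a double unimodular root could sit entirely in one coordinate. Reciprocity settles this. It forces each unimodular root to occur once in each coordinate and each reflected pair $\beta,1/\overline\beta$ to be split between them, which is precisely the $f_{j,i}/g_{j,i}$ alternative in the statement.

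\textbf{Constants.} Your bookkeeping of the unimodular constants does close. Take $D=c\prod_i(\overline{\alpha}_iz-1)$ with $\overline c/c=\xi^{1/p}$, and set $C_j=c_j/c^p$. Then the identity $c_j=\overline{c_{n-j}}\prod_i\overline{\xi_{j,i}}$ becomes $C_j\prod_i\xi_{j,i}=\overline{C_{n-j}}\,\xi$. This gives $\eta_j\delta_j\prod_i\xi_{j,i}=\xi$, and $\eta_l^2\prod_i\xi_{l,i}=\xi$ when $j=l$. The degenerate case $E_j\equiv0$ is covered by $r_j=0$.

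\textbf{What the paper's route buys.} Its positivity viewpoint is most natural for $j=l$ in the even case, where non-negativity of $\theta_l^2/\theta_n^p$ yields the pairing directly. For $j\ne n-j$, however, it needs the same reciprocity anyway.
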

	
	\begin{proof}
		Since there exists a holomorphic map
$
f:\mathbb D\to\mathbf{\Theta}_n$
interpolating the prescribed data, Theorem~\ref{Interpolation in Theta_n}
yields a rational $\mathbf{\Theta}_n$-inner function
$
g:\mathbb D\to\mathbf{\Theta}_n$
such that
$
g(z_j)=f(z_j),1\le j\le N.$
Replacing \(g\) by \(f\), we may therefore assume without loss of
generality that \(f\) itself is a rational
$\mathbf{\Theta}_n$-inner function.
Write $
f=(\theta_1,\ldots,\theta_n).$
Since $f$ is rational $\mathbf{\Theta}_n$-inner then $\theta_n$ is inner. Hence there exist $\xi\in\mathbb T$ and
$\alpha_1,\ldots,\alpha_k\in\mathbb D$ such that
\[
\theta_n(z)
=
\xi^{1/p}
\prod_{i=1}^{k}
\frac{z-\alpha_i}{1-\overline{\alpha_i}z}.
\]
Consequently,
\[
\theta_n^p(z)
=
\xi
\prod_{i=1}^{k}
\left(
\frac{z-\alpha_i}{1-\overline{\alpha_i}z}
\right)^p.
\]
Every element of the model space $\mathcal K_{\Theta}$ is of the form
$P/Q$, where $P$ is a polynomial of degree at most $pk$ and
\[
Q(z)=\prod_{i=1}^{k}(1-\overline{\alpha_i}z)^p.
\]
Since $\theta_j\in\mathcal K_{\Theta}$ for $1\le j\le n-1$, there exists a
polynomial $P_j$ of degree at most $pk$ such that
\[
\theta_j(z)=\frac{P_j(z)}{Q(z)}.
\]
Since
\[
\theta_j(z)=\overline{\theta_{n-j}(z)}\,\theta_n^p(z)
\quad \text{a.e.}~ z\in\mathbb T,
\]
it follows that
\begin{equation}\label{Inter 1}
\frac{\theta_j(z)\theta_{n-j}(z)}
{\theta_n^p(z)}
=
|\theta_{n-j}(z)|^2
\quad
 \text{a.e.}~ z\in\mathbb T.
\end{equation}
Substituting the above representations into \eqref{Inter 1}, we obtain
\begin{equation}\label{Inter 2}
\frac{\theta_j(z)\theta_{n-j}(z)}
{\theta_n^p(z)}
=
\frac{P_j(z)P_{n-j}(z)}
{\displaystyle
\xi\prod_{i=1}^{k}
(z-\alpha_i)^p
(1-\overline{\alpha_i}z)^p}.
\end{equation}
Since \eqref{Inter 1} shows that
\[
\frac{\theta_j(z)\theta_{n-j}(z)}{\theta_n^p(z)}
\]
is a non-negative rational function on $\mathbb T$, it follows from
\cite[p.~137]{Duren} that there exist
\[
R_j\ge0,\quad
t_j,q_j\in\mathbb N\cup\{0\},
\]
with $t_j+q_j=pk$,
\[
\xi_{j,1},\ldots,\xi_{j,t_j}\in\mathbb T,
\quad
\beta_{j,1},\ldots,\beta_{j,q_j}\in\mathbb D,
\]
such that
\begin{equation}\label{Inter 3}
\begin{aligned}
\frac{\theta_j(z)\theta_{n-j}(z)}{\theta_n^p(z)}
=
R_j
\frac{
\left(\prod_{i=1}^{t_j}(\xi_{j,i}+z)^2\right)
\left(\prod_{i=1}^{q_j}
(\beta_{j,i}-z)(\overline{\beta}_{j,i}z-1)\right)}
{\left(\prod_{i=1}^{t_j}\xi_{j,i}\right)
\left(\prod_{i=1}^{k}
(z-\alpha_i)^p(1-\overline{\alpha_i}z)^p\right)}.
\end{aligned}
\end{equation}
Now suppose that $n=2l+1$. Then $j\neq n-j$ for
$1\le j\le l$. Since
\[
\theta_j(z)
=
\overline{\theta_{n-j}(z)}\,\theta_n^p(z)
\quad  \text{a.e.}~ z\in\mathbb T,
\]
there exist
$r_j\in\mathbb R$ and
$\eta_j,\delta_j\in\mathbb T$
satisfying
\[
\eta_j\delta_j
\prod_{i=1}^{t_j}\xi_{j,i}
=
\xi,
\]
such that
\begin{equation}\label{Inter 4}
\begin{aligned}
\theta_j(z)
&=
r_j\eta_j
\frac{\left(\prod_{i=1}^{t_j}(\xi_{j,i}+z)\right)
\left(\prod_{i=1}^{q_j}f_{j,i}(z)\right)}
{\prod_{i=1}^{k}(1-\overline{\alpha_i}z)^p},\\
\theta_{n-j}(z)
&=
r_j\delta_j
\frac{\left(\prod_{i=1}^{t_j}(\xi_{j,i}+z)\right)
\left(\prod_{i=1}^{q_j}g_{j,i}(z)\right)}
{\prod_{i=1}^{k}(1-\overline{\alpha_i}z)^p},
\end{aligned}
\end{equation}
where, for each $1\le i\le q_j$, either
\[
f_{j,i}(z)=\overline{\beta}_{j,i}z-1,
\quad
g_{j,i}(z)=\beta_{j,i}-z,
\]
or
\[
f_{j,i}(z)=\beta_{j,i}-z,
\quad
g_{j,i}(z)=\overline{\beta}_{j,i}z-1.
\]
Consequently,
$
g(z)=\bigl(\theta_1(z),\ldots,\theta_n(z)\bigr)$
has the required form and interpolates the prescribed data. This proves
\((1)\).

$(2):$ Now suppose that $n=2l$. For $1\le j\le l-1$, the functions
$\theta_j$ and $\theta_{n-j}$ are given by \eqref{Inter 4}. Since
$\theta_l=\overline{\theta_l}\theta_n^p$ on $\mathbb T$, it follows that
$\theta_l^2/\theta_n^p$ is a non-negative rational function on
$\mathbb T$. Hence, arguing as above, there exist
$t_l,q_l\in\mathbb N\cup\{0\}$,
$r_l\in\mathbb R$,
$\beta_{l,1},\ldots,\beta_{l,q_l}\in\mathbb D$, and
$\eta_l,\xi_{l,1},\ldots,\xi_{l,t_l}\in\mathbb T$
satisfying
\[
t_l+2q_l=pk,
\qquad
\eta_l^2\prod_{i=1}^{t_l}\xi_{l,i}=\xi,
\]
such that
\begin{equation}\label{Inter 5}
\begin{aligned}
\theta_l(z)
=
r_l\eta_l
\frac{\left(\displaystyle\prod_{i=1}^{t_l}(\xi_{l,i}+z)\right)
\left(\displaystyle\prod_{i=1}^{q_l}
(\beta_{l,i}-z)(\overline{\beta}_{l,i}z-1)\right)}
{\displaystyle\prod_{i=1}^{k}(1-\overline{\alpha_i}z)^p}.
\end{aligned}
\end{equation}
Consequently,
$
g(z)=\bigl(\theta_1(z),\ldots,\theta_n(z)\bigr)$
has the required form and interpolates the prescribed data.
	 This completes the proof.
	\end{proof}

	\noindent (D. K. Keshari) \sc{School of Mathematical Sciences, National Institute of Science Education and Research, Bhubaneswar, An OCC of Homi Bhabha National Institute, Jatni, Khurda, 752050, India}\\
	{E-mail address:} {dinesh@niser.ac.in}
	
	\vspace{.5cm}
	
	\noindent (S. Nayak) \sc{School of Mathematical Sciences, National Institute of Science Education and Research, Bhubaneswar, An OCC of Homi Bhabha National Institute, Jatni, Khurda, 752050, India}\\
	{E-mail address:} {suryanarayan.nayak@niser.ac.in}
	
	\vspace{.5cm}
	
	\noindent (A. Pal) \sc{Department of Mathematics, IIT Bhilai, 6th Lane Road, Jevra, Chhattisgarh 491002}\\
	{E-mail address:} {avijit@iitbhilai.ac.in}
	
	\vspace{.5cm}
	
	\noindent (B. Paul) \sc{Department of Mathematics, IIT Bhilai, 6th Lane Road, Jevra, Chhattisgarh 491002}\\
	{E-mail address:} {bhaskarpaul@iitbhilai.ac.in}  

\end{document}